\documentclass[11pt]{amsart}  
\usepackage{setspace}
\newcommand{\R}{\mathbb{R}}
\newcommand{\E}{\mathbb{E}}
\def\ceil#1{\lceil #1 \rceil}

\calclayout

\usepackage{mathrsfs,mathtools}
\usepackage{xparse}
\usepackage{enumerate}
\usepackage{graphicx}
\usepackage{float}
\usepackage{xcolor}

\usepackage{tikz}
\usetikzlibrary{calc}

\usepackage{aliascnt}

\usepackage[colorlinks=true,allcolors=blue,backref=page]{hyperref}  
\usepackage[noabbrev,capitalize,nameinlink]{cleveref}

\definecolor{darkred}{RGB}{200,0,0}
\hypersetup{colorlinks={true},linkcolor={darkred},citecolor={darkred}}

\makeatletter
\newcommand{\thmheadlinktarget}{}
\newcommand{\linktheoremto}[1]{\gdef\thmheadlinktarget{#1}}
\newcommand{\unlinktheorem}{\gdef\thmheadlinktarget{}}

\newtheoremstyle{linkedplain}
  {6pt plus 2pt minus 1pt}   
  {6pt plus 2pt minus 1pt}   
  {\itshape}                 
  {}                         
  {\bfseries}                
  {.}                        
  {.5em}                     
  {%
    \if\relax\detokenize\expandafter{\thmheadlinktarget}\relax
      \thmname{#1}\thmnumber{ #2}%
    \else
      \hyperlink{\thmheadlinktarget}{\thmname{#1}\thmnumber{ #2}}%
    \fi
    \thmnote{ (#3)}%
  }
\makeatother

\theoremstyle{linkedplain}

\newtheorem{theorem}{Theorem}
\newtheorem*{theorem*}{Theorem}

\newaliascnt{lemma}{theorem}
\newtheorem{lemma}[lemma]{Lemma}
\aliascntresetthe{lemma}
\newtheorem*{lemma*}{Lemma}

\newaliascnt{claim}{theorem}
\newtheorem{claim}[claim]{Claim}
\aliascntresetthe{claim}
\newtheorem*{claim*}{Claim}

\newaliascnt{observation}{theorem}

\aliascntresetthe{observation}
\newtheorem*{observation*}{Observation}

\newaliascnt{prop}{theorem}
\newtheorem{prop}[prop]{Proposition}
\aliascntresetthe{prop}

\newaliascnt{corollary}{theorem}
\newtheorem{corollary}[corollary]{Corollary}
\aliascntresetthe{corollary}

\newaliascnt{example}{theorem}

\aliascntresetthe{example}

\theoremstyle{definition}
\newtheorem*{defn}{Definition}

\theoremstyle{remark}
\newaliascnt{remark}{theorem}

\aliascntresetthe{remark}
\newtheorem*{remark*}{Remark}

\makeatletter
\newcommand{\restatetheoremname}{}

\theoremstyle{linkedplain}
\newtheorem*{innerrestatetheorem}{\restatetheoremname}

\NewDocumentEnvironment{restatetheorem}{m o}
  {%
    \begingroup
    \renewcommand{\restatetheoremname}{%
      \hypertarget{restatement:#1}{}%
      Theorem~\ref*{#1}%
      \IfValueT{#2}{\ (\hyperref[#1]{#2})}%
    }%
    \begin{innerrestatetheorem}%
    \ignorespaces
  }
  {%
    \end{innerrestatetheorem}%
    \endgroup
    \ignorespacesafterend
  }
\makeatother

\newcommand{\cK}{\mathcal{K}}

\renewcommand{\S}{\mathbb{S}}
\newcommand{\relu}{\mathsf{ReLU}}

\newcommand{\cH}{\mathcal{H}}
\newcommand{\cP}{\mathcal{P}}

\newcommand{\eps}{\varepsilon}
\newcommand{\ip}[2]{\langle #1,#2 \rangle}
\DeclareMathOperator{\conv}{\mathsf{conv}}

\newcommand{\N}{\mathbb N}

\numberwithin{equation}{section}

\crefname{theorem}{theorem}{theorems}
\Crefname{theorem}{Theorem}{Theorems}

\crefname{lemma}{lemma}{lemmas}
\Crefname{lemma}{Lemma}{Lemmas}

\crefname{claim}{claim}{claims}
\Crefname{claim}{Claim}{Claims}

\crefname{observation}{observation}{observations}
\Crefname{observation}{Observation}{Observations}

\crefname{prop}{proposition}{propositions}
\Crefname{prop}{Proposition}{Propositions}

\crefname{corollary}{corollary}{corollaries}
\Crefname{corollary}{Corollary}{Corollaries}

\crefname{remark}{remark}{remarks}
\Crefname{remark}{Remark}{Remarks}

\crefname{example}{example}{examples}
\Crefname{example}{Example}{Examples}

\begin{document}

\title
{A simplex-based measure of symmetry}

\author[Bakaev]{Egor Bakaev}
\address{Department of Computer Science, University of Copenhagen}

\author[Yehudayoff]{Amir Yehudayoff}
\address{Department of Computer Science, University of Copenhagen,
and Department of Mathematics, Technion-IIT}

\begin{abstract}
For compact convex sets $L,K \subset \R^n$,
denote by $\lambda_K(L)$ the smallest size of a homothet of $K$ that contains $L$.
We define a measure of symmetry based on the $n$-simplex $\Delta = \Delta^n \subset \R^n$ as the ratio
\[
\rho_\Delta(L):=\frac{\lambda_{-\Delta}(L)}{\lambda_{\Delta}(L)}.
\]
We study this measure and deduce the following results:

(1) The classical Minkowski measure of symmetry
$m^*(L)$ can be defined as an affine-invariant version of $\rho_\Delta(L)$. 

(2) We improve the stability analysis for the Minkowski measure of symmetry;
if  $m^*(L)\ge n-\eps$
then $L$ is $\tfrac{1}{1-\eps}$-close to $\Delta$
in the
Banach--Mazur distance.

(3) We obtain a novel characterization of simplices as the only 
convex bodies~$K$ for which the function $L \mapsto \lambda_K(L)$ is additive
(a property we term ``outer additivity'').

(4) Motivated by the expressivity of $\relu$ neural networks,
we study the depth complexity of polytopes in $\R^n$ under the two operations: Minkowski sum and convex hull of a union. 
We prove the sharp bound $\rho_\Delta(P) \leq 2^d -1$ for every polytope~$P$ of depth complexity $d$. 
In other words, simplices cannot be approximated by low-depth polytopes. 

\end{abstract}

\maketitle

\tableofcontents

\newpage

\section{Introduction}
\label{sec:introduction}

\subsection{Measures of symmetry}
A set $K \subset \R^n$ is centrally symmetric if there is $o \in \R^n$ such that $x \in K-o \iff -x \in K-o$.
Measures of symmetry capture how close a given convex set is to being
centrally symmetric, and are (assumed to be) affine invariant. 
The theory of measures of symmetry for convex sets was initiated by
Gr{\"u}nbaum \cite{grunbaum1963measures}. The recent book by Toth
\cite{toth2015measures} gives a comprehensive account of this topic. The standard measure of symmetry is the Minkowski measure which is defined as follows.

\begin{defn}
For a convex body $K \subset \R^n$ and $o\in\operatorname{int}(K)$, the Minkowski symmetry of $K$ at $o \in \R^n$ is
\[
m_K(o):=\inf\{\alpha\ge 1 : -(K-o)\subseteq \alpha (K-o)\}.
\]
The \emph{Minkowski measure (of symmetry)} of $K$ is
\[
m^*(K):=\inf_{o\in \operatorname{int}(K)} m_K(o).
\]
\end{defn}

The infimum defining $m^*(K)$ is attained, and any minimizer is called a \emph{Minkowski center}.

The following theorem gives an equivalent description of $m_K(o)$ in terms of
slabs containing $K$. The support function of $K$ is denoted by $h_K$, and the sphere is denoted by $\S^{n-1} \subset \R^n$.

\begin{theorem}[Section 3.2 in \cite{toth2015measures}]
\label{thm:minkowski-slabs}
For a convex body $K \subset \R^n$ and $o\in\operatorname{int}(K)$, the Minkowski symmetry of $K$ at $o$ is
\[
m_K(o)=\max_{u\in\S^{n-1}}\frac{h_{K-o}(-u)}{h_{K-o}(u)}.
\]
\end{theorem}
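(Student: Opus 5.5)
We reduce to the case $o=0$ by translation: set $L=K-o$, a convex body with $0\in\operatorname{int}(L)$, so $h_L(u)>0$ for every $u\in\S^{n-1}$. The statement then reads
\[
m_K(o)=\inf\{\alpha\ge 1 : -L\subseteq \alpha L\} = \max_{u\in\S^{n-1}} \frac{h_L(-u)}{h_L(u)} .
\]
The main tool is the standard fact that, for convex bodies $A,B$, we have $A\subseteq B$ if and only if $h_A\le h_B$ pointwise on $\S^{n-1}$. The forward direction is immediate. The converse follows from the separation theorem: a point $a\in A\setminus B$ can be strictly separated from $B$ by some direction $u$, which gives $h_A(u)\ge\ip{a}{u}>h_B(u)$. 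We also use $h_{-L}(u)=h_L(-u)$ and $h_{\alpha L}=\alpha h_L$ for $\alpha\ge0$.

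With these facts, for $\alpha\ge0$ we get $-L\subseteq \alpha L$ if and only if $h_L(-u)\le \alpha h_L(u)$ for all $u$. Since $h_L(u)>0$, this is equivalent to $\alpha\ge R(u):=h_L(-u)/h_L(u)$ for all $u$. So the feasible set is $\{\alpha\ge1:\alpha\ge \sup_u R(u)\}$, and hence $m_K(o)=\max\{1,\sup_u R(u)\}$.

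It remains to check two things. First, the supremum is attained. Support functions of convex bodies are continuous (indeed Lipschitz), and $h_L$ is bounded away from $0$ on the compact sphere, so $R$ is continuous and attains its maximum. Second, the truncation at $1$ is harmless. For any $u$ we have $R(u)R(-u)=1$, so $\max_u R(u)\ge1$ automatically, and the $\max$ with $1$ can be dropped. This yields the formula.

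I expect the only real step to be the support-function characterization of inclusion, that is, the separation argument. Everything else is bookkeeping: positivity of $h_L$ coming from $o\in\operatorname{int}(K)$, continuity of $h_L$ for attainment, and the observation $R(u)R(-u)=1$, which disposes of the constraint $\alpha\ge1$.
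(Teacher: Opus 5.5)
Your proof is correct and complete. The paper does not prove this statement at all; it quotes it from Toth's book. So there is no in-paper argument to compare against, and your argument is the standard one.

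It also fits the toolkit the paper uses elsewhere. In \Cref{lem:lambda-formula}, the containment $K\subset t+\lambda\Delta_v$ is rewritten as support-function inequalities. In \Cref{lem:inscribed}, a strict support-function inequality is turned into containment in the interior.

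Your argument also carries over verbatim to the version the paper actually relies on in \Cref{sec:minkowski-equivalence}. There $K$ may be lower-dimensional and $o\in\operatorname{relint}(K)$. You work inside $E_K$, do the separation inside $E_K$, and take the maximum over $\S(E_K)$. Positivity of $h_{K-o}$ on $\S(E_K)$ then comes from $o$ lying in the relative interior. Restricting to $E_K$ is genuinely necessary, since $h_{K-o}$ vanishes on directions orthogonal to $E_K$.
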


We define a non-affine-invariant measure (of symmetry) based on simplices.

\subsection{The simplex-based measure} 
\label{subsection:simplex-based-measure}

A homothet of $K \subset \R^n$
is a set of the form $H=x+\lambda K$ where $x \in \R^n$ and $\lambda  \geq 0$ is called the coefficient of homothety (or size in short). 
For a compact convex set $K\subset \mathbb R^n$, the family of (nonnegative) homothets of $K$ is
\[
\mathcal H_K=\{x+\lambda K:x\in\mathbb R^n,\ \lambda\ge 0\} .
\]
For a compact convex $L\subset\mathbb R^n$, define
\[
\lambda_K(L):=\inf\{\lambda\ge 0:\exists x\in\mathbb R^n
\text{ such that } L\subset x+\lambda K\}.
\]
In words, it is the smallest size of a homothet of $K$ containing $L$.
If $L \subseteq \R^n$ is not a single point
and the interior of $K$ is non-empty then $0<\lambda_{K}(L)<\infty$.

We write $\Delta = \Delta^n$ for the standard regular simplex in $\R^n$ obtained by orthogonally projecting $\conv\{e_1,\ldots,e_{n+1}\}\subset\R^{n+1}$ to the subspace orthogonal to $(1,1,\ldots,1)$, where $e_1,\ldots,e_{n+1}$ are the standard unit vectors.

\begin{figure}
\centering
\begin{tikzpicture}[
  line cap=round,
  line join=round,
  edge/.style={black, line width=1.6pt},
  lab/.style={black, font=\bfseries\Large}
]

\newcommand{\EqTri}[3]{
  \coordinate (#1A) at #2;
  \coordinate (#1B) at ($(#1A)+(#3,0)$);
  \coordinate (#1C) at ($(#1A)+({#3/2},{#3*sqrt(3)/2})$);
  \draw[edge] (#1A)--(#1B)--(#1C)--cycle;
}

\node at (-7.7,1.8) {$\Delta:$};
\EqTri{K}{(-7.3,1.3)}{1.1}

\pgfmathsetmacro{\a}{2.8}
\pgfmathsetmacro{\b}{4.2}
\pgfmathsetmacro{\t}{3.7}
\pgfmathsetmacro{\hu}{sqrt(3)*\a}
\pgfmathsetmacro{\hd}{sqrt(3)*\b}

\coordinate (U1) at (-\a,0);
\coordinate (U2) at ( \a,0);
\coordinate (U3) at (0,\hu);

\coordinate (D1) at (-\b,\t);
\coordinate (D2) at ( \b,\t);
\coordinate (D3) at (0,\t-\hd);

\draw[edge] (U1)--(U2)--(U3)--cycle;
\draw[edge] (D1)--(D2)--(D3)--cycle;

\coordinate (H1) at ({\t/sqrt(3)-\a},\t);
\coordinate (H2) at ({\a-\t/sqrt(3)},\t);
\coordinate (H3) at ({(\a+\b)/2-\t/(2*sqrt(3))},{(\a-\b)*sqrt(3)/2+\t/2});
\coordinate (H4) at ({\b-\t/sqrt(3)},0);
\coordinate (H5) at ({\t/sqrt(3)-\b},0);
\coordinate (H6) at ({-(\a+\b)/2+\t/(2*sqrt(3))},{(\a-\b)*sqrt(3)/2+\t/2});

\coordinate (P1) at ($(H1)!0.50!(H2)$);
\coordinate (P2) at ($(H2)!0.42!(H3)$);
\coordinate (P3) at ($(H3)!0.40!(H4)$);
\coordinate (P4) at ($(H4)!0.28!(H5)$);
\coordinate (P5) at ($(H4)!0.88!(H5)$);
\coordinate (P6) at ($(H5)!0.40!(H6)$);
\coordinate (P7) at ($(H6)!0.42!(H1)$);

\filldraw[fill=gray!35, draw=black, line width=0.9pt]
  (P1)--(P2)--(P3)--(P4)--(P5)--(P6)--(P7)--cycle;
\node[lab, fill=white, inner sep=1.5pt] at (0,1.55) {$L$};

\end{tikzpicture}
\caption{An illustration of $\rho_\Delta(L)$.}
\label{fig:rho}
\end{figure}

\begin{defn}
For a compact convex set $L \subset \R^n$ that is not a single point,
the \emph{simplex-based measure} is
\[
\rho_\Delta(L):=\frac{\lambda_{-\Delta}(L)}{\lambda_{\Delta}(L)}.
\]
By convention, if $L$ is a point, then $\rho_\Delta(L):=0$. 
\end{defn}

\Cref{fig:rho} illustrates the definition. We use the term {\em measure} because it resembles a {\em measure of symmetry} but it is not a measure of symmetry in the common sense. A~measure of symmetry  should be maximal if and only if the body is symmetric~\cite{toth2015measures}, and $\rho$ does not satisfy this.

Let us start with describing some basic properties of this measure. 
It is invariant under scaling: $
\rho_\Delta(L)=\rho_\Delta(\lambda L)
$ for all $\lambda>0$.
It can be bounded by
\[\frac{1}{n} \le \rho_\Delta(L)\le n.\]
If $L \subset \R^n$ is not a point and is 
centrally symmetric then
$\rho_\Delta(L)=1$. The converse is false: $\rho_\Delta(L)=1$ can also hold for non-symmetric bodies --- one can take a ball and remove a small part of it from a region where it does not touch the boundaries of the homothets of $\Delta$ and $-\Delta$ in which it is inscribed.
Evaluating $\rho_\Delta(K)$ only requires $2(n+1)$ support-function values, so for polytopes this quantity is, in many cases, easy to compute.

We show that the Minkowski measure can also be described in terms
of containment in simplices. \Cref{sec:minkowski-equivalence}
shows that the supremum of $\rho_\Delta(K)$ over all affine images of $\Delta$ is equal to
$m^*(K)$.
In words, 
an affine-invariant version of $\rho_\Delta$ is $m^*(K)$.
This can be viewed as a refinement of \Cref{thm:minkowski-slabs}. Instead of inscribing
$K$ into all possible slabs, we inscribe it into all possible simplices. Consequently, properties of $\rho_\Delta$ yield corresponding properties of the Minkowski measure.

\subsection{Results}
\label{subsec:results}

\subsubsection*{An equivalent definition of the Minkowski measure (\Cref{sec:minkowski-equivalence})}

The following is an affine invariant version of the simplex-based measure.
\begin{defn}
For a non-singleton compact convex set $K \subset \R^n$, let $\mathscr S(K)$ denote the family of nondegenerate simplices in $E_K$ whose barycenter is $0$, and define
\[
\alpha(K):=\sup_{\Delta\in\mathscr S(K)}\rho_\Delta(K).
\]
\end{defn}

\linktheoremto{restatement:thm:main-equality}
\begin{theorem}
\label{thm:main-equality}
For every non-singleton compact convex set $K \subset \R^n$,
\[
\alpha(K)=m^*(K).
\]
\end{theorem}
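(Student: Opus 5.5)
Throughout, I work inside the affine hull of $K$, so I may assume $K$ is full-dimensional. I write $\Delta$ for an arbitrary simplex in $\mathscr S(K)$, not only the standard one.

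\textbf{Reduction to support functions.} Let $u_0,\dots,u_n$ be outer facet normals of a simplex $\Delta$ with barycenter $0$. I scale them so that $h_\Delta(u_i)=1$, that is, $\Delta=\{x:\ip{u_i}{x}\le 1\ \forall i\}$.

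For the vertices $v_j$ we have $\ip{u_i}{v_j}=1$ for $j\neq i$. Since $\sum_j v_j=0$, this forces $\ip{u_i}{v_i}=-n$. Using this, one checks $\sum_i u_i=0$. Conversely, any $n+1$ vectors that sum to $0$ and span $\R^n$ arise this way.

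Containment $K\subseteq x+\lambda\Delta$ is equivalent to $h_K(u_i)-\ip{u_i}{x}\le\lambda$ for all $i$. Summing over $i$ and using $\sum u_i=0$ gives $\lambda\ge\frac1{n+1}\sum_i h_K(u_i)$. Equality is attainable: the $u_i$ span $\R^n$ with only the single relation $\sum u_i=0$, so $x$ can be chosen to make all the left-hand sides equal.

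The same argument applies to $-\Delta$, whose normals are $-u_i$ with $h_{-\Delta}(-u_i)=1$. Hence
\[
\rho_\Delta(K)=\frac{\sum_i h_K(-u_i)}{\sum_i h_K(u_i)}.
\]
This quotient is invariant under translating $K$, because $\sum_i u_i=0$.

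\textbf{Upper bound.} Translate $K$ so that a Minkowski center is at $0$. By \Cref{thm:minkowski-slabs}, $h_K(-u)\le m^*(K)\,h_K(u)$ for every $u$. Summing these inequalities over the $u_i$ gives $\rho_\Delta(K)\le m^*(K)$, and therefore $\alpha(K)\le m^*(K)$.

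\textbf{Lower bound.} Let $m=m^*(K)$, with the Minkowski center at $0$. Let $A\subset\S^{n-1}$ be the set of active directions, i.e.\ those $u$ with $h_K(-u)=m\,h_K(u)$. By continuity and compactness, $A$ is nonempty and compact.

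The key claim is that $0\in\conv A$. Suppose not. Then there is $v$ with $\ip{v}{u}>0$ for all $u\in A$.

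Translating the center to $tv$ replaces the ratio with
\[
\frac{h_K(-u)+t\ip{v}{u}}{h_K(u)-t\ip{v}{u}}.
\]
Perturbing with $t<0$ small strictly decreases this ratio for every $u$ in a neighbourhood of $A$. Outside that neighbourhood the ratio is bounded away from $m$, so a small enough perturbation keeps it below $m$ there as well. Together this yields $m_K(tv)<m$, contradicting minimality.

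By Carathéodory's theorem, there are active $u_0,\dots,u_k$ with $k\le n$ and weights $a_i>0$ such that $\sum a_iu_i=0$. For the vectors $a_iu_i$, the ratio of the sums $\sum h_K(-a_iu_i)/\sum h_K(a_iu_i)$ equals exactly $m$, by homogeneity of $h_K$.

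If $k<n$, I complete this family to $n+1$ vectors that sum to $0$ and span $\R^n$. I add $\eps w_1,\dots,\eps w_{n-k}$ with the $w_j$ chosen to complete a spanning set, and subtract $\eps\sum_j w_j$ from $a_0u_0$. These modifications change both sums in the quotient by only $O(\eps)$. The resulting simplices $\Delta_\eps\in\mathscr S(K)$ therefore satisfy $\rho_{\Delta_\eps}(K)\to m$ as $\eps\to0$, so $\alpha(K)\ge m$.

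The main obstacle is the optimality condition $0\in\conv A$. The delicate step is the compactness and continuity argument for directions near, but outside, $A$ under the perturbation; everything else is routine.
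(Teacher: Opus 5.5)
Your proof is correct and follows the paper's route: the support-function formula for $\rho_\Delta$, the upper bound from summing $h_K(-u)\le m\,h_K(u)$, the optimality condition $0\in\conv A$ obtained by perturbing the Minkowski center, and Carath\'eodory. The only difference is that the paper handles degenerate tuples with a density and compactness argument (\Cref{cor:alpha-formula}) instead of your explicit $\eps$-completion. Your completion works provided you take a minimal Carath\'eodory representation, so that the $a_iu_i$ span a $k$-dimensional subspace; this also covers $k=n$.
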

\unlinktheorem

\subsubsection*{Stability of Minkowski measure (\Cref{sec:stability})}

For a convex body $L$ and a simplex $K$, let $\mu_K(L)$ be the size of the largest homothet of $K$ contained in $L$, and define 
\[
d_K(L):=\frac{\lambda_K(L)}{\mu_K(L)}.
\]
See \Cref{fig:d} for an illustration.
In words, it is the smallest ratio between the sizes of a simplex outside $L$ and a simplex inside $L$,
which is similar to the
condition in the Banach--Mazur distance.
It always holds that 
$d_K(L) \geq 1$ and $d_K(L)=1$
iff $L$ is a nonnegative homothet of $K$. 
The main comparison between the two measures is the following pair of inequalities:

\linktheoremto{restatement:thm:banach}
\begin{theorem}
\label{thm:banach}
For the simplex $\Delta \subset \R^n$ 
and a convex body $L \subset \R^n$,
\[
\frac{n}{d_\Delta(L)}
\le
\rho_\Delta(L)
\le
n-1+\frac{1}{d_\Delta(L)}.
\]
\end{theorem}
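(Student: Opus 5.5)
The plan is to reduce everything to support-function values in the $n+1$ facet directions of $\Delta$. Let $u_1,\dots,u_{n+1}$ be the outer facet normals of $\Delta$, normalized so that $\Delta=\{x:\ip{x}{u_i}\le 1\ \forall i\}$. Then $\sum_i u_i=0$, $h_\Delta(u_i)=1$, and $h_\Delta(-u_i)=n$, the last value attained at the vertex $v_i$ opposite facet $i$. For a convex body $L$ write $h_i:=h_L(u_i)$ and $g_i:=h_L(-u_i)$.

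The first step is the formula
\[
\lambda_\Delta(L)=\tfrac1{n+1}\textstyle\sum_i h_i,\qquad \lambda_{-\Delta}(L)=\tfrac1{n+1}\sum_i g_i,
\]
so that $\rho_\Delta(L)=\sum_i g_i/\sum_i h_i$. To see this, note that $L\subset x+\lambda\Delta$ iff $h_i\le \ip{x}{u_i}+\lambda$ for all $i$. Summing over $i$ and using $\sum u_i=0$ gives $\sum_i h_i\le (n+1)\lambda$. Conversely, any vector of prescribed values $\ip{x}{u_i}$ with zero sum is realizable, since the $u_i$ span $\R^n$ with a single linear relation. Hence the bound is attained, and the same argument applies to $-\Delta$ with the normals $-u_i$. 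After a translation and a scaling I will assume $L\subset\lambda\Delta$ with $h_i=\lambda$ for every $i$ (every facet touched), where $\lambda=\lambda_\Delta(L)$.

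For the lower bound, take an inscribed homothet $x+\mu\Delta\subset L$ with $\mu=\mu_\Delta(L)$. Then
\[
g_i\ge h_{x+\mu\Delta}(-u_i)=-\ip{x}{u_i}+n\mu.
\]
Summing and again using $\sum u_i=0$ gives $\sum_i g_i\ge n(n+1)\mu$. Therefore $\rho_\Delta(L)\ge n\mu/\lambda=n/d_\Delta(L)$.

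For the upper bound I need $\sum_i g_i\le (n+1)\bigl((n-1)\lambda+\mu\bigr)$. Equivalently, I must exhibit a homothet of $\Delta$ inside $L$ of size at least $s:=\frac1{n+1}\sum_i g_i-(n-1)\lambda$. Choose $q_j\in L$ with $\ip{q_j}{u_j}=-g_j$, and set $a_{ij}:=\lambda-\ip{q_j}{u_i}\ge 0$. These are (scaled) barycentric coordinates of $q_j$ in $\lambda\Delta$: each column of the matrix $A=(a_{ij})$ sums to $(n+1)\lambda$, and the diagonal entries are $a_{jj}=\lambda+g_j$, which are large. Since $\operatorname{conv}\{q_j\}\subset L$, it suffices to show that a simplex whose vertex $q_j$ is "pushed towards" the opposite side $-u_j$ contains a homothet of $\Delta$ of size $\ge s$.

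I would try to find this homothet as $\sum_j t_j q_j+\,$(a small copy of $\Delta$), choosing weights $t_j$ so that the resulting vertices $c+s v_k$ are convex combinations of the $q_j$. Concretely, I would aim to write $c+s v_k=\sum_j \beta_{jk}q_j$ with $\beta_{jk}=\beta_j+\frac{s}{?}\,\delta_{jk}$-type corrections. I would then verify nonnegativity using $a_{ij}\ge0$ and the off-diagonal column sums $\sum_{i\ne j}a_{ij}=n\lambda-g_j$.

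This barycentric bookkeeping is the step I expect to be the main obstacle. The lower bound and the reduction to the $h_i,g_i$ are routine. If the direct construction resists, a fallback is to prove the inequality $\mu_\Delta(\operatorname{conv}\{q_j\})\ge s$ by induction on the number of $q_j$ not at the "extreme" position $g_j=n\lambda$. The extreme case is $L=-\Delta$-like, where the bound is tight.
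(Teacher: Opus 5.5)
Your support-function formula and your lower bound are correct, and they coincide with the paper's argument. Reducing the upper bound to $\mu_\Delta(\conv\{q_0,\dots,q_n\})\ge s$ is also valid. However, this special case is no easier than the theorem. Applying the upper bound to $\conv\{q_j\}$, and using $h_{\conv\{q_j\}}(-u_j)\ge g_j$ and $\lambda_\Delta(\conv\{q_j\})\le\lambda$, recovers your claim. So all the content sits in the step you leave open, and the proposal does not prove it. The ansatz you sketch cannot work. With $\beta_{jk}=\beta_j+\kappa\,\delta_{jk}$ you get $\sum_j\beta_{jk}q_j=\sum_j\beta_jq_j+\kappa q_k$, which is a homothet of $\conv\{q_j\}$; it is a homothet of $\Delta$ only in the trivial case. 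The coefficient vectors are forced to be $A^{-1}$ applied to the scaled barycentric coordinates of the target vertices. The real task is therefore (i) to pick the right target homothet and (ii) to prove that these vectors are nonnegative. Step (i) is not automatic. Pushing each facet in by the deficit $d_j:=n\lambda-g_j$ of its own vertex already fails for $n=2$, $q_1=\lambda v_1$, $q_2=\lambda v_2$, $q_0\neq\lambda v_0$: that homothet contains $\lambda v_0\notin\conv\{q_j\}$. The fallback induction on the number of non-extreme $q_j$ is not an argument as it stands.

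Your route can be completed.
\begin{itemize}
\item[(a)] Put $R_i:=\sum_{j\ne i}a_{ij}$ and $H:=\{x:\ \ip{x}{u_i}\le\lambda-R_i\ \forall i\}$. Since $\sum_iR_i=\sum_jd_j$, the set $H$ is a homothet of $\Delta$ of size exactly $\lambda-\frac1{n+1}\sum_jd_j=s$.
\item[(b)] Assume $s>0$, so $d_j+d_k<(n+1)\lambda$ for $j\neq k$. Then $A$ is invertible: take a maximal $|\beta_i|$ in $A\beta=0$ and use $a_{ij}\le d_j$ for $i\ne j$.
\item[(c)] For $x\in H$, let $\beta$ solve $\sum_ja_{ij}\beta_j=\lambda-\ip{x}{u_i}$ for all $i$. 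The column sums give $\sum_j\beta_j=1$, hence $x=\sum_j\beta_jq_j$.
\item[(d)] Suppose $P:=\{j:\beta_j<0\}\neq\emptyset$; then $P^c\neq\emptyset$ since $\sum_j\beta_j=1$. Sum the equations over $i\in P$ and use:
\begin{itemize}
\item[--] $\sum_{i\in P}a_{ij}\ge a_{jj}=(n+1)\lambda-d_j$ for $j\in P$;
\item[--] $\sum_{i\in P}a_{ij}\le d_j$ for $j\notin P$;
\item[--] $\sum_{j\notin P}\sum_{i\in P}a_{ij}\le\sum_{i\in P}R_i$;
\item[--] $\sum_{j\notin P}(\beta_j-1)^+\le\sum_{j\in P}|\beta_j|$.
\end{itemize}
This yields $\sum_{i\in P}(\lambda-\ip{x}{u_i})\le\sum_{i\in P}R_i-\bigl((n+1)\lambda-\max_{j\in P}d_j-\max_{j\notin P}d_j\bigr)\sum_{j\in P}|\beta_j|<\sum_{i\in P}R_i$. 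This contradicts $x\in H$.
\end{itemize}
Hence $H\subseteq\conv\{q_j\}\subseteq L$.

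Completed this way, your approach would give a short elementary proof, quite different from the paper's. The paper instead reduces the inscribed simplex to at most $n+1$ contact half-spaces via a Klain-type lemma, smooths $L$, covers the circumscribed simplex by an inductive covering, and invokes the Akopyan--Karasev Kadets-type inequality. As submitted, however, the key step of the upper bound is missing.
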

\unlinktheorem

Interestingly, the proof 
is based on a Kadets-type result proved by Akopyan and Karasev~\cite{akopyan2012kadets}.

\begin{figure}
\centering
\begin{tikzpicture}[
  line cap=round,
  line join=round,
  edge/.style={black, line width=1.6pt},
  innertri/.style={black, line width=1.4pt},
  lab/.style={black, font=\bfseries\Large}
]

\newcommand{\RefTriangle}[2]{
  \coordinate (KA) at #1;
  \coordinate (KB) at ($(KA)+(#2,0)$);
  \coordinate (KC) at ($(KA)+({#2/2},{#2*sqrt(3)/2})$);
  \draw[edge] (KA)--(KB)--(KC)--cycle;
}

\node at (-7.7,1.8) {$K:$};
\RefTriangle{(-7.3,1.3)}{1.1}

\pgfmathsetmacro{\a}{2.8}
\pgfmathsetmacro{\b}{4.2}
\pgfmathsetmacro{\t}{3.7}
\pgfmathsetmacro{\hu}{sqrt(3)*\a}

\coordinate (U1) at (-\a,0);
\coordinate (U2) at ( \a,0);
\coordinate (U3) at (0,\hu);
\draw[edge] (U1)--(U2)--(U3)--cycle;

\coordinate (H1) at ({\t/sqrt(3)-\a},\t);
\coordinate (H2) at ({\a-\t/sqrt(3)},\t);
\coordinate (H3) at ({(\a+\b)/2-\t/(2*sqrt(3))},{(\a-\b)*sqrt(3)/2+\t/2});
\coordinate (H4) at ({\b-\t/sqrt(3)},0);
\coordinate (H5) at ({\t/sqrt(3)-\b},0);
\coordinate (H6) at ({-(\a+\b)/2+\t/(2*sqrt(3))},{(\a-\b)*sqrt(3)/2+\t/2});

\coordinate (P1) at ($(H1)!0.50!(H2)$);
\coordinate (P2) at ($(H2)!0.42!(H3)$);
\coordinate (P3) at ($(H3)!0.40!(H4)$);
\coordinate (P4) at ($(H4)!0.28!(H5)$);
\coordinate (P5) at ($(H4)!0.88!(H5)$);
\coordinate (P6) at ($(H5)!0.40!(H6)$);
\coordinate (P7) at ($(H6)!0.42!(H1)$);

\filldraw[fill=gray!35, draw=black, line width=0.9pt]
  (P1)--(P2)--(P3)--(P4)--(P5)--(P6)--(P7)--cycle;
\node[lab, fill=white, inner sep=1.5pt] at (0,1.55) {$L$};

\coordinate (T1) at (-1.966399152, 0.294096760);
\coordinate (T2) at ( 1.966399152, 0.294096760);
\coordinate (T3) at ( 0.000000000, 3.700000000);
\draw[innertri] (T1)--(T2)--(T3)--cycle;

\end{tikzpicture}
\caption{An illustration of $d_K(L)$.}
\label{fig:d}
\end{figure}

As an application of our techniques, we can bound the stability of the Minkowski measure of symmetry. 
The body in $\R^n$ with the largest Minkowski measure $m^*$ is the simplex and $m^*(\Delta) = n$.
Stability refers to a statement of the form: if $m^*(L)$ is close to the maximum $n$ then $L$ is close to $\Delta$
in the Banach--Mazur distance
$d_{\rm BM}$.
Stability estimates of this kind were  obtained in
\cite{boroczky1996around, boroczky2005stability, guo2005stability,toth2013notes}, and
to the best of our knowledge the strongest previously known result is due to Schneider (Theorem 2.1 in \cite{schneider2009stability}, see also \cite{toth2015measures}, Theorem 3.2.4). 
\begin{theorem}[\cite{schneider2009stability}]
If  $L\subset \R^n$ is a convex body and 
$m^*(L)\ge n-\eps$
for some $0\leq \eps<\frac{1}{n}$,
then
\[
 d_{\rm BM}(L,\Delta) < 1 +  \frac{(n+1)\eps}{1-n\eps}.
\]
\end{theorem}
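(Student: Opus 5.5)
We plan to derive this from the two main results stated above, \Cref{thm:main-equality} and \Cref{thm:banach}; the argument in fact gives the sharper bound $\frac{1}{1-\eps}$ claimed in the abstract. By \Cref{thm:main-equality}, $m^*(L)=\alpha(L)=\sup_{\Delta'\in\mathscr S(L)}\rho_{\Delta'}(L)$. So for every $\delta>0$ there is a nondegenerate simplex $\Delta'$, with barycenter $0$, such that $\rho_{\Delta'}(L)\ge n-\eps-\delta$.

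Next we reduce to the standard simplex. Write $\Delta'=T\Delta$ for some invertible linear map $T$; this is possible because both simplices have barycenter $0$. Containment of homothets is preserved by affine maps, so $\lambda_{\pm T\Delta}(L)=\lambda_{\pm\Delta}(T^{-1}L)$. Hence $\rho_{\Delta'}(L)=\rho_\Delta(T^{-1}L)$, and likewise $d_{\Delta'}(L)=d_\Delta(T^{-1}L)$. Set $L'=T^{-1}L$. Then $d_{\rm BM}(L',\Delta)=d_{\rm BM}(L,\Delta)$, and $\rho_\Delta(L')\ge n-\eps-\delta$.

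Now we apply the upper inequality of \Cref{thm:banach} to $L'$:
\[
n-\eps-\delta\le\rho_\Delta(L')\le n-1+\frac{1}{d_\Delta(L')},
\]
so $d_\Delta(L')\le \frac{1}{1-\eps-\delta}$. We then compare $d_\Delta$ with Banach--Mazur distance. Take the largest homothet $x+\mu\Delta\subseteq L'$ and the smallest homothet $y+\lambda\Delta\supseteq L'$. These are two positive homothets of the same simplex sandwiching $L'$ with ratio $\lambda/\mu$. By the definition of $d_{\rm BM}$ as an infimum over affine images and sandwiching ratios, $d_{\rm BM}(L',\Delta)\le d_\Delta(L')$. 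Letting $\delta\to0$ gives
\[
d_{\rm BM}(L,\Delta)\le \frac{1}{1-\eps}.
\]
If the supremum in $\alpha$ is attained, which we expect by a compactness argument over simplices normalized relative to $L$, there is no need for the limit.

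Finally we compare with the stated bound. We have $\frac{1}{1-\eps}=1+\frac{\eps}{1-\eps}$. For $0<\eps<\frac1n$ we have $1-\eps>1-n\eps>0$ and $1<n+1$, so $\frac{\eps}{1-\eps}<\frac{(n+1)\eps}{1-n\eps}$, which gives the strict inequality. At $\eps=0$ the argument gives $d_{\rm BM}(L,\Delta)=1$. The strict form of the stated bound then cannot hold, so that endpoint must be read non-strictly. The only substantive steps are the two theorems being invoked; the main care point is making sure the affine reduction respects the barycenter normalization in $\mathscr S(L)$ and the definition of $d_{\rm BM}$.
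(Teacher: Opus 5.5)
Your argument is correct and follows the paper's route: \Cref{thm:main-equality} combined with the upper bound of \Cref{thm:banach} is the paper's \Cref{lemma:minkowski-bm-simplex}, which gives \Cref{thm:stabBM}. Your comparison $\frac{1}{1-\eps}<1+\frac{(n+1)\eps}{1-n\eps}$ for $0<\eps<\frac1n$ then recovers the cited statement, and you are right that the strict form is false at $\eps=0$, since $d_{\rm BM}\ge 1$.

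Your $\delta$-approximation handles the supremum correctly. Your side remark that the supremum is attained by a nondegenerate simplex is false in general, though you never use it. For the prism $\Delta^2\times[-1,1]$, equality $h_L(-u)=2h_L(u)$ (support function taken about the origin) forces $u\in\R^2\times\{0\}$. So every tuple achieving $\rho=m^*=2$ is degenerate.
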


We prove the following stronger bound.

\linktheoremto{restatement:thm:stabBM}
\begin{theorem}
\label{thm:stabBM}
If  $L\subset \R^n$ is a convex body and 
$m^*(L)\ge n-\eps$
for some $0\leq \eps<1$,
then
\[
 d_{\rm BM}(L,\Delta)\leq \frac{1}{1-\eps}.
\]
\end{theorem}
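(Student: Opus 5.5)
We combine \Cref{thm:main-equality} with the upper bound in \Cref{thm:banach}, and then pass from $d_\Delta$ to the Banach--Mazur distance.

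\emph{Step 1: choose a simplex.} Since $L$ is a convex body, $E_L=\R^n$. By \Cref{thm:main-equality}, $\alpha(L)=m^*(L)\ge n-\eps$. We want an affine image $\Delta'$ of $\Delta$ with barycenter $0$ such that $\rho_{\Delta'}(L)\ge n-\eps$. Two routes are available:
\begin{itemize}
\item show that the supremum defining $\alpha(L)$ is attained, using compactness after normalizing the simplices modulo scaling, which $\rho$ ignores;
\item or take $\Delta'$ with $\rho_{\Delta'}(L)\ge n-\eps-\delta$, run the argument below, and let $\delta\to 0$. This works because the final bound is continuous in $\eps$ and $d_{\rm BM}$ is an infimum, attained by compactness.
\end{itemize}
Apply an invertible linear map $T$ sending $\Delta'$ to the standard $\Delta$. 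Then $\lambda_{\pm\Delta'}(L)=\lambda_{\pm\Delta}(TL)$, so $\rho$ is preserved and also $d_{\rm BM}(TL,\Delta)=d_{\rm BM}(L,\Delta)$. We may therefore assume $\Delta'=\Delta$. (We should check that $\rho$ is transported correctly under $T$; it is, since $T(-\Delta')=-\Delta$.)

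\emph{Step 2: apply \Cref{thm:banach}.} The upper bound gives
\[
n-\eps\le \rho_\Delta(L)\le n-1+\frac{1}{d_\Delta(L)},
\]
hence $1/d_\Delta(L)\ge 1-\eps$, that is, $d_\Delta(L)\le \frac{1}{1-\eps}$. The hypothesis $\eps<1$ guarantees that this quantity is positive and finite.

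\emph{Step 3: compare with the Banach--Mazur distance.} By definition of $\lambda_\Delta$ and $\mu_\Delta$ there are homothets with
\[
x+\mu_\Delta(L)\Delta\subseteq L\subseteq y+\lambda_\Delta(L)\Delta .
\]
Rescaling and translating gives $\Delta\subseteq L'\subseteq z+d_\Delta(L)\Delta$ for an affine image $L'$ of $L$. This is exactly a Banach--Mazur sandwich with ratio $d_\Delta(L)$. Here we use the convention for non-symmetric bodies: $d_{\rm BM}(K,L)$ is the infimum of $t$ such that $K\subseteq A L\subseteq z+tK$ for an affine map $A$. It follows that $d_{\rm BM}(L,\Delta)\le d_\Delta(L)\le \frac{1}{1-\eps}$.

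The main obstacle is Step 1: making sure the supremum in $\alpha$ is attained, or that the limiting argument is clean. All the real content sits in \Cref{thm:banach}, which we are allowed to use.
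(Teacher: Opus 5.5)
Your proof is correct and is essentially the paper's: it combines $m^*(L)=\alpha(L)=\sup_{\Delta}\rho_\Delta(L)$ with the upper bound $\rho_\Delta(L)\le n-1+1/d_\Delta(L)$. The paper simply takes the supremum of both sides and uses $\sup_\Delta 1/d_\Delta(L)=1/\inf_\Delta d_\Delta(L)=1/d_{\rm BM}(L,\Delta)$, with $d_{\rm BM}$ defined as this infimum, so it needs neither a near-optimal simplex nor attainment. Of your two options in Step 1, only the $\delta\to 0$ route is sound as written: a compactness limit of normalized tuples can be degenerate, and then it is not a simplex.
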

\unlinktheorem

Our bound is stronger, 
and is even asymptotically better.
Our bound holds for all $\eps \in [0,1)$ and has the form 
$\frac{1}{1-\eps}
= 1+\eps+O(\eps^2)$,
while the previous bound holds only for $\eps \leq \frac{1}{n}$ and is worse by a factor depending on the dimension
$1+\frac{(n+1)\eps}{1-n\eps} 
> 1 + (n+1) \eps$.

\subsubsection*{Outer additivity (\Cref{sec:outAdd})}
Several of the results above rely on the fact that simplices satisfy the following additivity property.
The Minkowski sum of two sets is
$A+B = \{a + b : a \in A,b \in B\}$.

\begin{defn}
A convex body $K \subset \R^n$ is called {\em outer additive} if for every $L = L_1+L_2$ where $L_1,L_2 \subset \R^n$ are convex and compact, it holds that
\[\lambda_{K}(L) = \lambda_{K}(L_1) + \lambda_{K}(L_2).\]
\end{defn}

The additivity of $K$ does not rely on the ``position'' of $K$; it is invariant under invertible affine maps.
Because it is always true that
$\lambda_{K}(L) \leq \lambda_{K}(L_1) + \lambda_{K}(L_2)$, the additivity of $K$ is equivalent to 
$\lambda_{K}(L) \geq \lambda_{K}(L_1) + \lambda_{K}(L_2)$. 

The fact that simplices are outer additive easily follows from the additivity of support functions.
It is natural to ask whether similar properties can be obtained by replacing
    the simplex with some other convex body.
\Cref{sec:outAdd} characterizes  simplices as the only outer additive convex bodies.

\linktheoremto{restatement:thm:AddIsSimplex}
\begin{theorem}
\label{thm:AddIsSimplex}
A convex body is outer additive iff it is a simplex.
\end{theorem}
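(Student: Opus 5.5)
The plan has two directions: simplices are outer additive, and every outer additive body is a simplex.

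\emph{Simplices are outer additive.} By affine invariance it suffices to treat a simplex $K$ with outer unit facet normals $u_0,\dots,u_n$. We may assume $0$ is a point with $\sum_i h_K(u_i)=1$; this normalization is possible after translating $K$, since $\sum_i w_i u_i=0$ for suitable facet-area weights. A homothet $x+\lambda K$ is exactly the set $\{y:\ip{u_i}{y}\le \ip{u_i}{x}+\lambda h_K(u_i)\}$. Given $L$, the translations $x$ are free, so the smallest $\lambda$ is obtained by requiring $h_L(u_i)\le \ip{u_i}{x}+\lambda h_K(u_i)$ for all $i$. Because the $n+1$ normals positively span $\R^n$ with a one-dimensional space of linear relations, we can choose $x$ to make all $n+1$ constraints tight at once. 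This yields the formula
\[
\lambda_K(L)=\frac{\sum_i w_i h_L(u_i)}{\sum_i w_i h_K(u_i)},
\]
where the $w_i>0$ satisfy $\sum_i w_i u_i=0$. This expression is linear in $h_L$, and $h_{L_1+L_2}=h_{L_1}+h_{L_2}$, so additivity follows.

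\emph{Outer additive bodies are simplices.} Suppose $K$ is outer additive but not a simplex. I would build compact convex $L_1,L_2$ with $\lambda_K(L_1+L_2)<\lambda_K(L_1)+\lambda_K(L_2)$. The natural choice is segments, $L_1=[0,a]$ and $L_2=[0,b]$, so that $L_1+L_2$ is a parallelogram. For a segment $[0,v]$, $\lambda_K([0,v])$ is the reciprocal of the length of the longest chord of $K$ parallel to $v$, normalized by $|v|$. If equality held for all parallelograms, then writing $g(v)=\lambda_K([0,v])$, the function $g$ is the norm of the difference body $K-K$, and we would get $\lambda_K(P)=g(a)+g(b)$.

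Since $P\supseteq$ the segments $[0,a+b]$ and $[a,b]$, this gives $g(a)+g(b)=\lambda_K(P)\ge \max(g(a+b),g(a-b))$. That inequality is not immediately contradictory, so the plan needs more. The sharper route is to take $L_1=K$ and $L_2=-K$, so that $L=K-K$. Outer additivity then gives $\lambda_K(K-K)=1+\lambda_K(-K)$. By Theorem 3.2 in Toth's book we know $\lambda_K(-K)=m^*(K)$, or at least that it relates to it. Also $K-K\subseteq (1+m^*)K$ after a translation at the Minkowski center, since $-K\subseteq m^*K$. The key is the reverse containment question: when does $K-K$ fit in a homothet of $K$ of size less than $1+\lambda_K(-K)$?

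I would prove the known fact that $\lambda_K(K-K)=1+\lambda_K(-K)$ forces $K$ to be a simplex, by testing with general $L_1=K$, $L_2=-tK$ or with Minkowski sums of $K$ and a segment. The cleanest version is to apply additivity to $L=K+[0,v]$ for all directions $v$. Additivity gives $\lambda_K(K+[0,v])=1+g(v)$, which says that no homothet of $K$ of size below $1+g(v)$ contains $K+[0,v]$. This should force every chord of $K$ of maximal length in direction $v$ to run from a vertex to an opposite face in a rigid way. Combined over all directions, this should force $K$ to have exactly $n+1$ facets.

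The main obstacle is this last rigidity step, deducing the facial structure from the equalities. What I would try first is the support-function characterization: additivity over all Minkowski sums means the map $h_L\mapsto\lambda_K(L)$ is additive, positively homogeneous and monotone on the cone of support functions, hence it extends to a positive linear functional. By Riesz representation, $\lambda_K(L)=\int h_L\,d\mu$ for some measure $\mu$ on $\S^{n-1}$. Translation invariance gives $\int u\,d\mu=0$. Also $\lambda_K(L)\le1$ iff a translate of $L$ lies in $K$, iff $h_L\le h_K+\ip{\cdot}{x}$ for some $x$. Testing with $L$ equal to small caps of $K$ near a boundary point shows that $\mu$ must be supported on facet normals of $K$. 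If $K$ had more than $n+1$ facets, the relation $\int u\,d\mu=0$ would not pin down $\mu$. Taking $L$ to be $K$ with one facet pushed outward slightly would then contradict $\lambda_K(L)=\int h_L\,d\mu$: when the support of $\mu$ is affinely dependent beyond one relation, some facet can be moved without changing $\lambda_K$, while the integral changes. This forces exactly $n+1$ facet normals, so $K$ is a simplex.
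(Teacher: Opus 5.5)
Your proof that simplices are outer additive is correct and is essentially the paper's argument (\Cref{lem:lambda-formula}: $\lambda_K(L)$ is a fixed positive combination of the values $h_L(u_i)$). The normalization sentence is muddled but unnecessary, since your formula carries its own denominator. The converse, however, is not proved. The detour through $K-K$ rests on a false ``known fact'': for every centrally symmetric $K$, $-K$ is a translate of $K$ and $K-K$ is a translate of $2K$, so $\lambda_K(K-K)=2=1+\lambda_K(-K)$. The Riesz step is sound and a nice idea. The functional $\Lambda(h_{L_1}-h_{L_2}):=\lambda_K(L_1)-\lambda_K(L_2)$ is well defined by outer additivity, and positive because $h_{L_1}\ge h_{L_2}$ forces $L_2\subseteq L_1$. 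It is bounded by comparison with $h_B\equiv 1$ for the unit ball $B$, and differences of support functions are dense in $C(\S^{n-1})$. But the two steps after it carry all the content, they are only asserted, and the second is wrong as stated.

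First, ``testing with small caps'' is not an argument, and taking $L$ to be a small cap itself gives nothing. What you need is that $\mu$ is concentrated on $\{u:h_L(u)=h_K(u)\}$ whenever $L\subseteq K$ and $\lambda_K(L)=1$. You also need a certificate: if $L\subseteq K$, $h_L(u_j)=h_K(u_j)$ for $j\le m$, and $0\in\conv\{u_1,\dots,u_m\}$, then $\lambda_K(L)=1$. (If $L\subseteq x+\lambda K$, average $(1-\lambda)h_K(u_j)\le\ip{x}{u_j}$ with the convex weights.)

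Second, no facet can be pushed \emph{outward} without changing $\lambda_K$: if $L\supsetneq K$ then $\lambda_K(L)>1$, because $K\subseteq L\subseteq x+K$ forces $L=K$. Also, non-uniqueness of $\mu$ under $\int u\,d\mu=0$ is no contradiction. The move that works is inward. For $u_0\in S:=\operatorname{supp}\mu$ put $L_\eps:=K\cap\{y:\ip{u_0}{y}\le h_K(u_0)-\eps\}$, and write $B_u=\mathcal{S}(K,u)\cap K$ as in the paper. Then $h_{L_\eps}<h_K$ on a neighborhood of $u_0$, so $\lambda_K(L_\eps)=\int h_{L_\eps}\,d\mu<1$. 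Hence no finite set of directions $u$ with $h_{L_\eps}(u)=h_K(u)$ has $0$ in its convex hull.

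Suppose $\dim B_{u_0}\le n-2$. A hyperplane through $B_{u_0}$ and an interior point of $K$ has a normal $p$ with $B_{\pm p}\not\subseteq B_{u_0}$, so $h_{L_\eps}(\pm p)=h_K(\pm p)$ for small $\eps$, a contradiction. So every element of $S$ is a facet normal. Then every $u\in S\setminus\{u_0\}$ satisfies $h_{L_\eps}(u)=h_K(u)$ for small $\eps$, and so $0\notin\conv(S\setminus\{u_0\})$. Moreover $0\in\conv S$, and $S$ spans $\R^n$ (otherwise $\lambda_K([0,p])=\int\max\{0,\ip{u}{p}\}\,d\mu=0$ for some $p\perp S$). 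Carath\'eodory then forces $S$ to be $n+1$ affinely independent vectors. Finally, $K$ equals the simplex $P:=\{y:\ip{u}{y}\le h_K(u)\ \forall u\in S\}$, because $K\subseteq P$ and $\lambda_K(P)=\int h_P\,d\mu=1$.

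Completed this way, your route would be a genuinely different, functional-analytic proof. The paper instead stays with segments: \Cref{lem:notAdditive} builds a parallelogram $L_1+L_2\subseteq K$ with $\lambda_K(L_1)+\lambda_K(L_2)=\frac{1+\alpha}{2}>1$, followed by an induction on dimension via $K=\conv(\{a\}\cup B_u)$. As written, though, your rigidity step is missing.
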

\unlinktheorem

\subsubsection*{Depth complexity (\Cref{sec:depth})}

Our final application is motivated by the study of neural networks (NNs).
The study of NNs lead to the following model for constructing polytopes.
Polytopes of depth zero are 
\[
\cP_{n,0}=\{\{x\}:x\in\R^n\}.
\]
For $d>0$, polytopes of depth at most $d$ are
\[
\cP_{n,d}
=
\left\{
\sum_{j=1}^m \conv(K_j\cup L_j)
:
m\in\N,\ K_j,L_j\in\cP_{n,d-1}
\right\}.
\]
The \emph{depth complexity} of a polytope $P\subset\R^n$ is the smallest
$d$ such that $P\in\cP_{n,d}$.
We find this computational-complexity-like framework for studying polytopes interesting in its own right, and provide further motivations later on.

\linktheoremto{restatement:thm:Simplex}
\begin{theorem}
\label{thm:Simplex}
If a polytope $P \subset \R^n$ has depth complexity $d < \lceil \log_2(n+1) \rceil$, then 
\[\rho_\Delta(P) \le 2^d-1.\]
This bound is sharp (this inequality is an equality for some polytopes).
\end{theorem}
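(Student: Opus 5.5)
We work with support functions. Write $\Delta=\conv\{v_1,\dots,v_{n+1}\}$ with $\sum_i v_i=0$, and let $u_i$ be the outer unit normals of $\Delta$, so that $u_i=-v_i/|v_i|$ up to scaling. The facets of $-\Delta$ then have normals $-u_i$. For any compact convex $L$ we have
\[
\lambda_\Delta(L)=c\sum_{i=1}^{n+1}h_L(u_i),\qquad \lambda_{-\Delta}(L)=c\sum_{i=1}^{n+1}h_L(-u_i)
\]
for a fixed normalizing constant $c>0$. This holds because a simplex homothet containing $L$ is determined by its $n+1$ facet offsets, and the size is an affine function of their sum. Both quantities are additive under Minkowski sums, which is the outer additivity of simplices. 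It follows that for $P=\sum_j P_j$ the ratio $\rho_\Delta(P)$ is a mediant of the ratios $\rho_\Delta(P_j)$, so $\rho_\Delta(P)\le\max_j\rho_\Delta(P_j)$.

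The plan is to prove by induction on $d$ a stronger, quantitative statement that survives the convex-hull operation. A bound on the ratio alone cannot be passed through $\conv(K\cup L)$, since $h_{\conv(K\cup L)}=\max(h_K,h_L)$ and the ratio interacts badly with the max. Instead, for $P\in\cP_{n,d}$ we track the vector $a_i=h_P(u_i)$, $b_i=h_P(-u_i)$ after translating $P$ suitably. We aim for the claim
\[
\textstyle\sum_i b_i\le(2^d-1)\sum_i a_i ,
\]
together with a structural statement: there is a set $S_d$ of at most $2^d$ indices such that $\sum_i b_i$ is controlled by $\sum_{i\in S}a_i$-type terms. A cleaner intermediate invariant to try is this: for every $P\in\cP_{n,d}$ and every set $I$ of at most $n+1-2^d$ facet indices, $\sum_i h_P(-u_i)$ is bounded by $(2^d-1)$ times $\sum_i h_P(u_i)$, using the fact that $P$'s width function is a sum of widths of pieces. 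The condition $d<\lceil\log_2(n+1)\rceil$ means $2^d<n+1$, which suggests the mechanism: a depth-$d$ polytope is "spanned" by at most $2^d$ points per summand.

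So the key lemma I would aim for is that each summand $\conv(K\cup L)$ at depth $d$ is, after discarding Minkowski sums (handled by the mediant property), essentially a convex hull of $2^d$ depth-zero points. Expanding $\conv(\sum\cdot\cup\sum\cdot)$ fully is not a Minkowski-sum-of-hulls. Instead, I would use the inequality $h_{\conv(K\cup L)}\le h_K+h_L$ after translating so that $0\in K\cap L$, and lower bounds via $h_{\conv(K\cup L)}\ge h_K$.

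The base computation is as follows. For $Q=\conv\{p_1,\dots,p_k\}$ with $k\le n$ points translated so that their centroid is $0$, we need $\sum_i h_Q(-u_i)\le(k-1)\sum_i h_Q(u_i)$. This reduces, by linearity in the directions $u_i$ (which satisfy $\sum_i u_i=0$), to an inequality for the simplex spanned by $k$ points: the sum of minima over $n+1$ linear functionals is at least $-(k-1)/k$ times the sum of maxima. That in turn follows from $\min\ge$ (centroid value) $-\,(k-1)\cdot$(max $-$ centroid value), combined with $\sum_i\langle 0,u_i\rangle=0$. Sharpness comes from taking $P=\conv\{v_1,\dots,v_{2^d}\}$, a face of $\Delta$, which has depth $d$ via a balanced binary tree of hulls. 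Its ratio computes directly to $2^d-1$.

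\textbf{Main obstacle.} The hard step is the inductive passage through $\conv(K\cup L)$ when $K$ and $L$ are themselves Minkowski sums. The count of "effective points" must double rather than multiply. I would try to formulate the invariant as a multilinear, facet-wise inequality of the form
\[
\textstyle\sum_i h_P(-u_i)\le\sum_i\bigl[(2^d-1)h_P(u_i)\bigr]-\text{(correction)},
\]
valid pointwise in a linear combination of the $u_i$-evaluations. Such an inequality is preserved both by sums (by linearity) and by max (since $\max$ of pointwise-dominated quantities doubles the constant plus one: $2(2^{d-1}-1)+1=2^d-1$).
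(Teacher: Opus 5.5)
There is a genuine gap, and it sits exactly at the step you call the main obstacle. You never give an argument that carries a bound through $\conv(K\cup L)$, and the routes you sketch do not lead to one:
\begin{itemize}
\item The proposed key lemma, that each depth-$d$ summand is ``essentially'' a hull of $2^d$ points, is false as stated. Already $\conv(Z\cup\{p\})$, with $Z$ a zonotope and $p$ a point, is a depth-$2$ summand with arbitrarily many vertices. So your $k$-point base computation never enters the induction.
\item The invariant involving the index set $I$ never actually uses $I$.
\item The final ``pointwise'' heuristic is misdirected. Pointwise domination $h(-u)\le c\,h(u)$ about a \emph{common} center is preserved by $\max$ with the same constant $c$. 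The real difficulty, that $K$ and $L$ have different centers, is not addressed.
\item The hypothesis $2^d<n+1$ is not part of any mechanism. The upper bound holds for every $d$; the hypothesis only makes it nontrivial and leaves room for the sharpness example.
\end{itemize}
Most importantly, your assertion that a bound on the ratio alone cannot be passed through $\conv(K\cup L)$ is wrong, and that is the missing idea.

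The first invariant you wrote down is already the right one, and it needs no translation. Set $A(P):=\sum_i h_P(-u_i)$ and $B(P):=\sum_i h_P(u_i)$. Since $\sum_i u_i=0$, both are translation invariant and both vanish on points. So prove $A(P)\le(2^d-1)B(P)$ for $P\in\cP_{n,d}$ by induction.

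For $L=\conv(K_1\cup K_2)$ we have $h_L=\max(h_{K_1},h_{K_2})$. Suppose $h_L(-u_i)=h_{K_1}(-u_i)$. Using $h_{K_2}(u)+h_{K_2}(-u)\ge 0$ and $h_{K_2}\le h_L$,
\[
h_L(-u_i)\le h_{K_1}(-u_i)+h_{K_2}(-u_i)+h_{K_2}(u_i)\le h_{K_1}(-u_i)+h_{K_2}(-u_i)+h_L(u_i),
\]
and the other case is symmetric. Summing over $i$ gives $A(L)\le A(K_1)+A(K_2)+B(L)$. Since $B(K_j)\le B(L)$, the inductive hypothesis $A(K_j)\le c\,B(K_j)$ with $c\ge 0$ yields $A(L)\le(2c+1)B(L)$.

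Additivity of $A$ and $B$ under Minkowski sums (your mediant step) preserves the constant. Hence $c_0=0$ and $c_d=2c_{d-1}+1$, which gives $2^d-1$. This is the paper's argument via \Cref{thm:De} and \Cref{thm:DeUnion}. Your sharpness example is the same as the paper's.
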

\unlinktheorem

Consequently, small depth polytopes are, in some sense, close to being symmetric in the Minkowski measure of symmetry. 

\linktheoremto{restatement:thm:minkowski-depth-upper}
\begin{theorem}
\label{thm:minkowski-depth-upper}
Let $P\subset\R^n$ be a non-singleton polytope, let $k=\dim P$, and suppose
that $P\in\cP_{n,d}$. If $0\le d<\lceil\log_2(k+1)\rceil$, then
\[
m^*(P)\le 2^d-1.
\]
\end{theorem}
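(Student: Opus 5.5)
The plan is to derive this from \Cref{thm:main-equality} and \Cref{thm:Simplex}, applied inside the affine hull of $P$. Let $E$ be the affine hull of $P$, so $\dim E=k\ge 1$, and translate so that $E$ is a linear subspace. By \Cref{thm:main-equality},
\[
m^*(P)=\alpha(P)=\sup_{\Delta'\in\mathscr S(P)}\rho_{\Delta'}(P),
\]
where the simplices $\Delta'$ are nondegenerate $k$-simplices in $E$ with barycenter $0$. It therefore suffices to show $\rho_{\Delta'}(P)\le 2^d-1$ for every such $\Delta'$.

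Fix $\Delta'$ and choose an invertible linear map $T:E\to\R^k$ with $T\Delta'=\Delta^k$. The quantities $\lambda_{\pm\Delta'}(P)$ are computed by homothets inside $E$: any homothet in $\R^n$ containing $P$ can be intersected with $E$, or we simply read the definition inside $E$ as in \Cref{thm:main-equality}. Since $T$ is linear, it commutes with homotheties, so $\rho_{\Delta'}(P)=\rho_{\Delta^k}(TP)$. We then need $TP$ to have depth complexity at most $d$ in $\R^k$, after which \Cref{thm:Simplex} in dimension $k$ gives $\rho_{\Delta^k}(TP)\le 2^d-1$, using the hypothesis $d<\lceil\log_2(k+1)\rceil$.

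The main obstacle is showing that $P\in\cP_{n,d}$ implies $TP\in\cP_{k,d}$. The difficulty is that the building blocks of $P$ need not lie in $E$. I would handle this by extending $T$ to an affine map $\R^n\to\R^k$. Concretely, compose an affine projection $\pi:\R^n\to E$ that fixes $E$ pointwise with $T$. Affine maps commute with both operations:
\[
A(K+L)=A K+A L+\text{const},\qquad A\,\conv(K\cup L)=\conv(AK\cup AL).
\]
The translation constants are harmless, since translates of points are points and a constant can be absorbed into a single summand. So by induction on $d$, $A(\cP_{n,d})\subseteq\cP_{k,d}$. Because $\pi$ fixes $P\subset E$, we get $A P=TP$ up to translation, and this completes the argument.

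I would also double-check two points. First, that \Cref{thm:Simplex} applies as stated for $k$ in place of $n$, including $k=1$, where the hypothesis forces $d=0$, $P$ is a point, and the case is vacuous. Second, that the sharp bound needs no nondegeneracy beyond $P$ being a non-singleton, which is given.
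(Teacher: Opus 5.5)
Your proposal is correct and follows the paper's proof: fix $\Delta'\in\mathscr S(P)$, map it by an invertible linear map onto $\Delta^k$, apply \Cref{thm:Simplex} in dimension $k$, and take the supremum using $m^*(P)=\alpha(P)$ from \Cref{thm:main-equality}. Your extra step is a useful addition: extending $T$ to an affine map $\R^n\to\R^k$ through a projection onto $E$, and checking that affine maps preserve $\cP_{\cdot,d}$ up to translation, justifies $T(P)\in\cP_{k,d}$, which the paper asserts without proof.
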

\unlinktheorem

\Cref{thm:Simplex}
follows from the following two more informative theorems, which control the simplex-based measure under the two operations on polytopes we consider.
\linktheoremto{restatement:thm:De}
\begin{theorem}
\label{thm:De}
If $L = L_1+\ldots+L_m$ where $L_1,\ldots,L_m \subset \R^n$ are non-singleton compact convex sets, then
\[
\rho_\Delta(L) \leq  \max \{ \rho_\Delta(L_j) : j \in [m]\}.
\]
\end{theorem}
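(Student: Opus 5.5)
The plan is to use outer additivity of simplices, and then the elementary mediant inequality for ratios. First I record a support-function formula for $\lambda_\Delta$. Let $u_1,\dots,u_{n+1}$ be the outer unit normals of the facets of $\Delta$; they satisfy $\sum_i u_i=0$. Then a homothet $x+\lambda\Delta$ is exactly the set $\{y:\ip{u_i}{y}\le \ip{u_i}{x}+\lambda c\ \forall i\}$, where $c=h_\Delta(u_i)$ is the common facet distance. Since $\sum_i u_i=0$, containment $L\subseteq x+\lambda\Delta$ is possible iff $\sum_i h_L(u_i)\le (n+1)\lambda c$. This holds because we can then choose $x$ to make each facet inequality tight up to equal slack: the map $x\mapsto(\ip{u_i}{x})_i$ has image the hyperplane $\{\sum t_i=0\}$, so any distribution of slacks summing to zero is achievable. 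Hence
\[
\lambda_\Delta(L)=\frac{1}{(n+1)c}\sum_{i=1}^{n+1}h_L(u_i),\qquad
\lambda_{-\Delta}(L)=\frac{1}{(n+1)c}\sum_{i=1}^{n+1}h_L(-u_i).
\]
The second formula follows by the same argument, since $-\Delta$ has facet normals $-u_i$.

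Second, since support functions are additive under Minkowski sums, $h_{L}=\sum_j h_{L_j}$. Plugging this into the formulas above gives
\[
\lambda_\Delta(L)=\sum_{j=1}^m\lambda_\Delta(L_j),\qquad \lambda_{-\Delta}(L)=\sum_{j=1}^m\lambda_{-\Delta}(L_j).
\]
This is the outer additivity of $\Delta$ and of $-\Delta$ mentioned before \Cref{thm:AddIsSimplex}, so I may quote it instead of rederiving it.

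Third, I apply the mediant inequality. Each $L_j$ is a non-singleton, so $\lambda_\Delta(L_j)>0$. Therefore
\[
\rho_\Delta(L)=\frac{\sum_j\lambda_{-\Delta}(L_j)}{\sum_j\lambda_\Delta(L_j)}\le\max_j\frac{\lambda_{-\Delta}(L_j)}{\lambda_\Delta(L_j)}=\max_j\rho_\Delta(L_j).
\]
The inequality holds because $\lambda_{-\Delta}(L_j)\le M\lambda_\Delta(L_j)$ for each $j$, where $M$ is the maximum, and summing over $j$ gives the bound. The set $L$ is also a non-singleton, so $\rho_\Delta(L)$ is given by the ratio definition rather than the convention for points.

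The only step with real content is the support-function formula for $\lambda_\Delta$, specifically the claim that equal slack can always be achieved by translation. I would verify it via the linear-algebra observation that the translation map onto $\{\sum t_i=0\}$ is surjective. Everything after that is immediate.
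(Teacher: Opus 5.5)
Your proof is correct and follows essentially the same route as the paper: it uses the support-function formula $\lambda_{\Delta_v}(K)=\frac{1}{n+1}\sum_i h_K(v_i)$ (\Cref{lem:lambda-formula}, proved by the same surjectivity onto $\{\sum_i c_i=0\}$), additivity of support functions, and the same weighted-average (mediant) step. The only difference is cosmetic: you take unit facet normals with a common facet distance $c$, which relies on $\Delta^n$ being regular and centered, whereas the paper normalizes by $h_\Delta(v_i)=1$, which works for any simplex with barycenter at the origin.
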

\unlinktheorem

\linktheoremto{restatement:thm:DeUnion}
\begin{theorem}
\label{thm:DeUnion}
If $L = \conv(L_1 \cup L_2)$ where $L_1,L_2 \subset \R^n$ are non-singleton compact convex sets, then
\[
\rho_\Delta(L) \le \rho_\Delta(L_1) + \rho_\Delta(L_2) + 1.
\]
\end{theorem}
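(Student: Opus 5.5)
The plan is to express both $\lambda_\Delta$ and $\lambda_{-\Delta}$ through sums of support-function values at the $n+1$ facet normals of $\Delta$, so that $\rho_\Delta$ becomes a ratio of two such sums. The inequality for the convex hull then follows from an elementary estimate on maxima of support functions.

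\emph{Step 1 (formula for $\lambda_{\pm\Delta}$).} Let $u_1,\dots,u_{n+1}$ be the outer unit normals of the facets of $\Delta$. They satisfy $\sum_i u_i=0$, any $n$ of them are linearly independent, and $\Delta=\{x:\ip{u_i}{x}\le c\ \forall i\}$ for some $c>0$. The inclusion $L\subset x+\lambda\Delta$ is equivalent to $h_L(u_i)\le \ip{x}{u_i}+\lambda c$ for all $i$.
\begin{itemize}
\item Summing over $i$ and using $\sum_i u_i=0$ gives $S_+(L):=\sum_i h_L(u_i)\le (n+1)c\lambda$.
\item Conversely, if $S_+(L)\le (n+1)c\lambda$, we choose $x$ with $\ip{x}{u_i}=h_L(u_i)-\lambda c+\delta_i$, where the slacks $\delta_i\ge 0$ satisfy $\sum_i\delta_i=(n+1)c\lambda-S_+(L)$. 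This system is solvable: prescribe $\ip{x}{u_i}$ for $i\le n$ by linear independence, and the $(n+1)$-st value is then forced by $\sum_i u_i=0$ and is consistent with the sum constraint.
\end{itemize}
Hence $\lambda_\Delta(L)=S_+(L)/((n+1)c)$. Since $-\Delta$ has normals $-u_i$, the same argument gives $\lambda_{-\Delta}(L)=S_-(L)/((n+1)c)$ with $S_-(L):=\sum_i h_L(-u_i)$. Therefore
\[
\rho_\Delta(L)=\frac{S_-(L)}{S_+(L)}.
\]

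\emph{Step 2 (the key inequality).} Let $h_1,h_2$ be the support functions of $L_1,L_2$, so that $h_L=\max(h_1,h_2)$ for $L=\conv(L_1\cup L_2)$. Using $\max(a,b)=a+b-\min(a,b)$ we get
\[
S_-(L)=S_-(L_1)+S_-(L_2)-\sum_i \min\bigl(h_1(-u_i),h_2(-u_i)\bigr).
\]
Fix points $p\in L_1$ and $q\in L_2$. Then $h_1(-u_i)\ge -\ip{p}{u_i}$ and $h_2(-u_i)\ge-\ip{q}{u_i}$. Since $p,q\in L$, it follows that
\[
\min\bigl(h_1(-u_i),h_2(-u_i)\bigr)\ge -\max(\ip{p}{u_i},\ip{q}{u_i})\ge -h_L(u_i).
\]
Summing over $i$ yields $S_-(L)\le S_-(L_1)+S_-(L_2)+S_+(L)$.

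\emph{Step 3 (conclusion).} By Step 1, $S_-(L_j)=\rho_\Delta(L_j)S_+(L_j)$ for $j=1,2$. Since $L_j\subseteq L$, monotonicity of support functions gives $S_+(L_j)\le S_+(L)$. Combining with Step 2,
\[
S_-(L)\le \bigl(\rho_\Delta(L_1)+\rho_\Delta(L_2)+1\bigr)S_+(L).
\]
Here $S_+(L)>0$ because $L$ is not a point, so dividing gives $\rho_\Delta(L)\le\rho_\Delta(L_1)+\rho_\Delta(L_2)+1$.

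The main obstacle is Step 1, specifically the claim that the minimal homothet is determined by the sum of the support values alone. The non-trivial direction is the solvability of the linear system for $x$, which relies on $\sum_i u_i=0$ together with the linear independence of any $n$ of the normals. Step 1 is also where simplices are essential: it is essentially the outer additivity of simplices. Once it is in place, Steps 2 and 3 are short.
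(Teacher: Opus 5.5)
Your proof is correct and follows essentially the same route as the paper. Both write $\rho_\Delta$ as the ratio $S_-(L)/S_+(L)$ of sums of support values at the facet normals (the paper's Lemma~\ref{lem:lambda-formula} and Proposition~\ref{prop:rho-formula}), prove the pointwise bound $h_L(-u_i)\le h_{L_1}(-u_i)+h_{L_2}(-u_i)+h_L(u_i)$, sum it, and use $S_+(L_j)\le S_+(L)$. The only cosmetic difference is in how you obtain the pointwise bound: you use $\max(a,b)=a+b-\min(a,b)$ with fixed points $p\in L_1$, $q\in L_2$, while the paper does a case split and applies $h_K(u)+h_K(-u)\ge 0$ to the non-maximizing set.
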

\unlinktheorem

In words, the theorems state that Minkowski sum does not increase the simplex-based measure beyond the worst summand, and the convex hull of union of two polytopes at most doubles (with a possible extra $+1$). 
Both theorems are proved in \Cref{sec:rho-operations}.

Because $\rho_\Delta(\Delta) = n$,
we can deduce that simplices cannot be approximated by low-depth polytopes. 
In \Cref{sec:inapproximability}, we translate the geometric obstruction given by $\rho_\Delta(\Delta)$ being large to a natural inapproximability statement for $h_\Delta$ by small depth {\em input  convex neural networks}
(we mention that $h_\Delta$ is particularly important in this area---it is ``complete for depth'').

\subsection*{Acknowledgments}

We thank Gennadiy Averkov, Juan Luis Valerdi Cabrera and
Florestan Brunck
for helpful remarks. 
The authors are supported by a DNRF Chair grant.
The authors are part of BARC, Basic Algorithms Research Copenhagen, supported by the VILLUM Foundation grant 54451.

\section{Basic properties}
\label{sec:simplex-measure}

\subsection{A formula via support functions}
The goal of this section is to develop a formula for the simplex-based measure (\Cref{prop:rho-formula} below).
For the simplex $\Delta = \Delta^n \subset \R^n$ and a non-singleton compact convex set $L \subset \R^n$, recall from \Cref{subsection:simplex-based-measure} that
\[
\rho_\Delta(L):=\frac{\lambda_{-\Delta}(L)}{\lambda_\Delta(L)}.
\]
Let $\mathscr S$ be the family of nondegenerate (i.e., with nonempty interior) simplices in $\R^n$ whose barycenter is the origin.
A tuple $v=(v_0,\dots,v_n)\in (\R^n)^{n+1}$ is called \emph{nondegenerate} if any $n$ of the vectors $v_0,\dots,v_n$ are linearly independent.
Let $V$ be the set of nondegenerate 
$(v_0,\dots,v_n)\in(\R^n)^{n+1}$ such that
\[
\sum_{i=0}^n v_i=0.
\]
For every nondegenerate tuple $v=(v_0,\dots,v_n)\in V$, define
\[
\Delta_v:=\{x\in\R^n:\ \langle v_i,x\rangle\le 1 \ \forall i \in \{0,\dots,n\} \}.
\]
The vectors $v_i$ are normal to the facets of $\Delta_v$.

\begin{lemma}\label{lem:simplex}
If $v\in V$ then $\Delta_v\in\mathscr S$.
Conversely, for every $\Delta\in\mathscr S$, there exists a unique
$v\in V$ such that $\Delta=\Delta_v$.
\end{lemma}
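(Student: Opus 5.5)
We will prove the two directions separately, relying throughout on the facet description of a simplex and on the duality between the vertices of a simplex and the normals of its facets.

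\emph{From $v$ to a simplex.} Let $v\in V$. Since any $n$ of the $v_i$ are linearly independent, for each $j$ the system $\langle v_i,x\rangle=1$ for $i\neq j$ has a unique solution $p_j$. Pairing $p_j$ with the relation $\sum_i v_i=0$ gives
\[
\langle v_j,p_j\rangle=-\sum_{i\ne j}\langle v_i,p_j\rangle=-n<1 .
\]
Hence $p_j\in\Delta_v$.

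Next we show that $\Delta_v$ is bounded. Suppose $y$ satisfies $\langle v_i,y\rangle\le 0$ for all $i$. Since $\sum_i\langle v_i,y\rangle=0$, every term vanishes. As the $v_i$ span $\R^n$, this forces $y=0$. So the recession cone of $\Delta_v$ is $\{0\}$ and $\Delta_v$ is a polytope. It is defined by $n+1$ inequalities, and $0$ lies in its interior because $\langle v_i,0\rangle=0<1$. Therefore it is full-dimensional.

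Each vertex of $\Delta_v$ is cut out by $n$ tight inequalities with linearly independent normals, so every vertex is one of the $p_j$. Consequently $\Delta_v=\conv\{p_0,\dots,p_n\}$, and it has nonempty interior, so it is a nondegenerate simplex. Its barycenter $b=\frac{1}{n+1}\sum_j p_j$ satisfies
\[
\langle v_i,b\rangle=\tfrac{1}{n+1}\bigl(n\cdot 1+(-n)\bigr)=0
\]
for every $i$. Since the $v_i$ span $\R^n$, this gives $b=0$, so $\Delta_v\in\mathscr S$.

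\emph{From a simplex to $v$.} Let $\Delta=\conv\{p_0,\dots,p_n\}\in\mathscr S$. Then $0=\frac1{n+1}\sum_j p_j$ lies in the interior of $\Delta$. Every facet $F_j=\conv\{p_i:i\ne j\}$ therefore lies on an affine hyperplane not through $0$. Such a hyperplane can be written uniquely as $\{x:\langle v_j,x\rangle=1\}$, with $v_j$ chosen so that $\Delta$ lies on the side $\langle v_j,x\rangle\le 1$. Since a full-dimensional polytope is the intersection of its facet halfspaces, $\Delta=\Delta_v$.

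Next we check that $\sum_j v_j=0$. We have $\langle v_j,p_i\rangle=1$ for $i\ne j$, and $\sum_i p_i=0$ gives $\langle v_j,p_j\rangle=-n$. Hence for every $i$,
\[
\Bigl\langle \sum_j v_j, p_i\Bigr\rangle = n\cdot 1+(-n)=0 .
\]
Since the $p_i$ span $\R^n$ (because $\Delta$ has nonempty interior and $0$ is in its interior), $\sum_j v_j=0$.

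Then we check nondegeneracy. Suppose $\sum_{j\ne k}c_jv_j=0$ for some $k$ and coefficients $c_j$. Pairing with $p_i$ for $i\neq k$ gives
\[
\sum_{j\ne k,\,j\ne i}c_j -n c_i=0 .
\]
Writing $S=\sum_{j\neq k}c_j$, this says $S-(n+1)c_i=0$, so all $c_i$ equal $S/(n+1)$. Summing over the $n$ values of $i$ then gives $nS/(n+1)=S$, so $S=0$ and every $c_i=0$. Hence any $n$ of the $v_j$ are linearly independent, and $v\in V$.

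\emph{Uniqueness.} Let $v\in V$. The first part shows that $\Delta_v$ is a simplex whose $n+1$ defining inequalities each cut out a distinct facet, namely the one opposite $p_j$. Hence the set $\{v_0,\dots,v_n\}$ is determined by $\Delta_v$, because the facet hyperplanes of a polytope are intrinsic and each is normalized uniquely by the condition of taking value $1$. Uniqueness is therefore up to the labeling of the tuple, and we read the statement as uniqueness of $v$ up to permutation of its entries, equivalently as the statement that the unordered set $\{v_0,\dots,v_n\}$ is unique.

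The main obstacle is this last point. A literal ordered tuple is not unique, since permuting its entries gives another element of $V$ defining the same simplex. We would either state the result with uniqueness up to order, or fix a canonical labeling, for instance indexing $v_j$ by the facet opposite a chosen ordering of the vertices of $\Delta$.
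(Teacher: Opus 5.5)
Your proof is correct and follows the paper's route: you take the points $p_j$ solving $\langle v_i,x\rangle=1$ for $i\ne j$, note $\langle v_j,p_j\rangle=-n$, and compute the barycenter; for the converse you use normalized facet normals, get $\sum_j v_j=0$ by pairing with the vertices, and get nondegeneracy from the matrix with $-n$ on the diagonal and $1$ elsewhere, whose invertibility you check explicitly. You also justify boundedness and the vertex description of $\Delta_v$, which the paper only asserts, and your remark that uniqueness holds only up to permuting the entries of $v$ is right; the paper's ``uniqueness follows from the normalization'' must be read that way.
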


\begin{proof}
Assume $v \in V$.
For each $i$, let $q_i \in \R^n$ be the unique point satisfying
\[
\langle v_j,q_i\rangle=1 \qquad \forall j\ne i.
\]
Hence,
$\langle v_i,q_i\rangle
=
-\sum_{j\ne i}\langle v_j,q_i\rangle
=-n$ for every $i$.
The points $q_0,\ldots,q_n$ are the vertices of the nondegenerate $\Delta_v$. Also, for every $j$,
\[
\Bigl\langle v_j,\sum_{i=0}^n q_i\Bigr\rangle
=\langle v_j,q_j\rangle+\sum_{i\ne j}\langle v_j,q_i\rangle
=-n+n=0.
\]
Since the $v_j$'s span $\R^n$, the barycenter of $\Delta_v$ is $\sum_i q_i = 0$. Thus $\Delta_v\in\mathscr S$.

Conversely, let $\Delta=\conv(q_0,\dots,q_n)\in\mathscr S$. Let $v_0,\ldots,v_n$ be the facet normals normalized such that
\[
\Delta=\{x\in\R^n:\ \langle v_i,x\rangle\le 1 \ \forall i\}.
\]
Thus, $\langle v_i,q_j\rangle=1$ for $j\ne i$, and 
\[
\langle v_i,q_i\rangle=-\sum_{j\ne i}\langle v_i,q_j\rangle=-n.
\]
Therefore, for every $j$,
\[
\Bigl\langle\sum_{i=0}^n v_i,q_j\Bigr\rangle
=
\langle v_j,q_j\rangle+\sum_{i\ne j}\langle v_i,q_j\rangle
=-n+n=0.
\]
Since the $q_j$ span $\R^n$, this gives $\sum_{i=0}^n v_i=0$. The nondegeneracy of $(v_0,\dots,v_n)$ holds because an $n \times n$ matrix with $-n$ on the diagonal and $1$ elsewhere has full rank, and uniqueness follow from the normalization.
\end{proof}

\begin{lemma}\label{lem:o-independent}
For every $(v_0,\dots,v_n)\in V$, every compact convex set $K \subset \R^n$, and every $o\in\R^n$,
\[
\sum_{i=0}^n h_{K-o}(v_i) = \sum_{i=0}^n h_K(v_i).
\]
\end{lemma}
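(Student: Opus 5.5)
The plan is to use the translation rule for support functions together with the defining relation $\sum_{i=0}^n v_i=0$ of the set $V$. First I would record that for any compact convex $K$, any $o\in\R^n$ and any $u\in\R^n$,
\[
h_{K-o}(u)=\sup_{x\in K}\langle x-o,u\rangle=\sup_{x\in K}\langle x,u\rangle-\langle o,u\rangle=h_K(u)-\langle o,u\rangle .
\]
This holds because translating a set by $-o$ shifts every inner product with $u$ by the same constant $-\langle o,u\rangle$, so the supremum shifts by that constant. Compactness of $K$ guarantees the supremum is finite.

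Next I would apply this with $u=v_i$ for each $i\in\{0,\dots,n\}$ and sum over $i$. By linearity of the inner product in its second argument,
\[
\sum_{i=0}^n h_{K-o}(v_i)=\sum_{i=0}^n h_K(v_i)-\Bigl\langle o,\sum_{i=0}^n v_i\Bigr\rangle .
\]
Since $(v_0,\dots,v_n)\in V$, we have $\sum_i v_i=0$, so the last term vanishes and the claimed identity follows.

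The argument has no real obstacle. The only point worth noting is that nondegeneracy of $v$ is not needed; the identity uses only the vanishing sum. I would state this explicitly so the lemma can later be applied to the facet normals of $\Delta$ and $-\Delta$.
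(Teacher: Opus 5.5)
Your proposal is correct and is exactly the paper's argument: the translation rule $h_{K-o}(u)=h_K(u)-\langle o,u\rangle$ summed over $i$, with the correction term vanishing because $\sum_{i=0}^n v_i=0$. Your remark that nondegeneracy is not needed is accurate. It is also useful, since the paper later applies this lemma (in the proof of Corollary~\ref{cor:alpha-formula}) to arbitrary, possibly degenerate, tuples in $V(E)$.
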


\begin{proof}
Use that $h_{K-o}(u)=h_K(u)-\langle o,u\rangle$ and $\sum_{i=0}^n v_i=0$.
\end{proof}

\begin{lemma}\label{lem:lambda-formula}
For $v=(v_0,\dots,v_n)\in V$ and a nonempty compact convex $K \subset \R^n$,
\[
\lambda_{\Delta_v}(K)=\frac1{n+1}\sum_{i=0}^n h_K(v_i).
\]
\end{lemma}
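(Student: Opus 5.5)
We prove both inequalities in the claimed formula $\lambda_{\Delta_v}(K)=\frac1{n+1}\sum_i h_K(v_i)$ directly from the definition of $\lambda$, using the facet description of $\Delta_v$ and \Cref{lem:o-independent}.

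\emph{Containment in a homothet.} A homothet $x+\lambda\Delta_v$ with $\lambda>0$ is
\[
\{y:\langle v_i,y-x\rangle\le\lambda\ \forall i\}.
\]
Hence $K\subset x+\lambda\Delta_v$ holds iff $h_K(v_i)-\langle v_i,x\rangle\le\lambda$ for every $i$. Equivalently,
\[
h_{K-x}(v_i)\le\lambda \quad \text{for all } i.
\]
The degenerate case $\lambda=0$ occurs only when $K$ is a point. In that case $h_K(v_i)=\langle v_i,K\rangle$ and the sum is $0$ because $\sum v_i=0$, so it is trivial.

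\emph{Lower bound.} Suppose $K\subset x+\lambda\Delta_v$. Summing the $n+1$ inequalities and applying \Cref{lem:o-independent} gives
\[
(n+1)\lambda\ge\sum_i h_{K-x}(v_i)=\sum_i h_K(v_i).
\]
Hence $\lambda_{\Delta_v}(K)\ge\frac1{n+1}\sum_i h_K(v_i)$.

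\emph{Upper bound.} Set $\lambda^*=\frac1{n+1}\sum_i h_K(v_i)$. We need an $x$ with $\langle v_i,x\rangle=h_K(v_i)-\lambda^*$ for all $i$. This is $n+1$ linear equations in $n$ unknowns. By nondegeneracy, $v_1,\dots,v_n$ are a basis, so the equations for $i=1,\dots,n$ have a unique solution $x$. The equation for $i=0$ then follows automatically:
\[
\langle v_0,x\rangle=-\sum_{i\ge1}\langle v_i,x\rangle=-\sum_{i\ge1}h_K(v_i)+n\lambda^*=h_K(v_0)-\lambda^*,
\]
where the last equality uses $\sum_i h_K(v_i)=(n+1)\lambda^*$. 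With this $x$ every facet inequality is tight, so $K\subset x+\lambda^*\Delta_v$.

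We also need $\lambda^*\ge0$. This holds because $(n+1)\lambda^*=\sum_i h_{K-k}(v_i)$ for any $k\in K$, and each term is $\ge\langle v_i,0\rangle=0$.

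The only step needing care is this solvability of the overdetermined system, which is exactly where the condition $\sum v_i=0$ enters. Everything else is routine.
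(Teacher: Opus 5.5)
Your proof is correct and follows essentially the same route as the paper: the facet description turns containment into $h_K(v_i)\le\langle v_i,x\rangle+\lambda$, averaging with $\sum_i v_i=0$ gives the lower bound, and solvability of the resulting linear system for $x$ gives the upper bound. The paper phrases that last step as surjectivity of $t\mapsto(\langle v_i,t\rangle)_i$ onto the sum-zero hyperplane of $\R^{n+1}$ by a dimension count, whereas you solve using the basis $v_1,\dots,v_n$ and then check the $i=0$ equation. Your explicit check that $\lambda^*\ge 0$ fills in a detail the paper leaves implicit.
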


\begin{proof}
The inclusion $K\subset t+\lambda\Delta_v$ is equivalent to
\[
h_K(v_i)\le \langle v_i,t\rangle+\lambda \qquad \forall i,
\]
because
\[
t+\lambda\Delta_v 
=
\{ x\in\R^n: \langle v_i,x-t \rangle \le \lambda \ \forall i\}.
\]
Averaging over $i$,
\[
\lambda\ge \frac1{n+1}\sum_{i=0}^n h_K(v_i) =: \lambda_0.
\]
Define the linear map $A:\mathbb R^n\to\mathbb R^{n+1}$ by
\[
A(t)=(\langle v_0,t\rangle,\ldots,\langle v_n,t\rangle).
\]
Since the vectors $v_0,\dots,v_n$ span $\R^n$, the map $A$ is injective.
Moreover, for every $t\in\R^n$,
\[
\sum_{i=0}^n \langle v_i,t\rangle
=
\Bigl\langle \sum_{i=0}^n v_i,t\Bigr\rangle
=0.
\]
It follows that the image of $A$ is
\[
\operatorname{im}(A)=H:=\Bigl\{(c_0,\dots,c_n)\in\R^{n+1}:\ \sum_{i=0}^n c_i=0\Bigr\}.
\]
For
$b_i:=h_K(v_i)-\lambda_0$
we have $\sum_{i=0}^n b_i=0$. 
Hence, there exists $t\in\R^n$ such that
\[
A(t)=(b_0,\dots,b_n),
\]
or
\[
h_K(v_i)\le \langle v_i,t\rangle+\lambda_0 \qquad \forall i. 
\]
It follows that $K\subset t+\lambda_0\Delta_v$ and
$\lambda_{\Delta_v}(K)=\lambda_0$.
\end{proof}

\begin{prop}\label{prop:rho-formula}
Let $v=(v_0,\dots,v_n)\in V$, and let $K \subset \R^n$ be a non-singleton compact convex set and $o\in\R^n$. Then,
\[
\rho_{\Delta_v}(K)=\frac{\sum_{i=0}^n h_{K-o}(-v_i)}{\sum_{i=0}^n h_{K-o}(v_i)}.
\]
\end{prop}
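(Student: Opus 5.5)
The proof will combine \Cref{lem:lambda-formula} for $\Delta_v$ and for $-\Delta_v$ with the translation invariance from \Cref{lem:o-independent}. By definition,
\[
\rho_{\Delta_v}(K)=\frac{\lambda_{-\Delta_v}(K)}{\lambda_{\Delta_v}(K)}.
\]

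First I would identify $-\Delta_v$ as a set of the same form. Since
\[
-\Delta_v=\{x:\langle v_i,-x\rangle\le 1\ \forall i\}=\{x:\langle -v_i,x\rangle\le 1\ \forall i\},
\]
we have $-\Delta_v=\Delta_{-v}$, where $-v=(-v_0,\dots,-v_n)$. This tuple still lies in $V$: the sum $\sum_i(-v_i)=0$, and any $n$ of the vectors $-v_i$ are linearly independent exactly when the corresponding $v_i$ are. Applying \Cref{lem:lambda-formula} twice then gives
\[
\lambda_{\Delta_v}(K)=\frac1{n+1}\sum_i h_K(v_i),\qquad \lambda_{-\Delta_v}(K)=\frac1{n+1}\sum_i h_K(-v_i).
\]

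Taking the quotient, the factors $\frac1{n+1}$ cancel. It remains to replace $K$ by $K-o$ in both sums, which \Cref{lem:o-independent} permits, applied to $v$ and to $-v$ (both in $V$).

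The only point needing care is that the denominator is nonzero, so that the quotient is defined. I would argue this as follows. By the introduction, $\lambda_{\Delta_v}(K)>0$ for a non-singleton $K$, since $\Delta_v$ has nonempty interior. To see this directly, pick an interior point $o$ of $K$ relative to its affine hull, or any point of $K$. Each term $h_{K-o}(v_i)$ is then nonnegative. If all of them were zero, $K-o$ would lie in $\{x:\langle v_i,x\rangle\le 0\ \forall i\}=\{0\}$, because $0\cdot\Delta_v$ is a single point. That would make $K$ a single point, contradicting the hypothesis. Hence the sum is positive, and the same argument applies to the numerator.
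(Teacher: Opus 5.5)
Your proof is correct and matches the paper's argument: write $-\Delta_v=\Delta_{-v}$ with $-v\in V$, apply \Cref{lem:lambda-formula} to both $v$ and $-v$, and use \Cref{lem:o-independent} to pass from $K$ to $K-o$. Your extra check that the denominator is positive is a valid addition that the paper leaves implicit; it rests on $\lambda_{\Delta_v}(K)>0$ for non-singleton $K$, which the paper notes in the introduction.
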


\begin{proof}
Since $-\Delta_v=\Delta_{-v}$ and $-v\in V$, Lemmas \ref{lem:o-independent} and \ref{lem:lambda-formula} complete the
proof.
\end{proof}

\subsection{Minkowski sum and convex hull of union}
\label{sec:rho-operations}

\begin{restatetheorem}{thm:De}[restatement]
If $L = L_1+\ldots+L_m$ where $L_1,\ldots,L_m \subset \R^n$ are non-singleton compact convex sets, then
\[
\rho_\Delta(L) \le \max \{ \rho_\Delta(L_j) : j \in [m]\}.
\]
\end{restatetheorem}

\begin{proof}
Write $\Delta=\Delta_v$ for $v=(v_0,\dots,v_n)\in V$. For $j\in[m]$, set
\[
A_j:=\sum_{i=0}^n h_{L_j}(-v_i)
\quad \text{and} \quad
B_j:=\sum_{i=0}^n h_{L_j}(v_i).
\]
By \Cref{prop:rho-formula},
\[
\rho_\Delta(L_j)=\frac{A_j}{B_j}
\qquad \forall j\in[m].
\]
By the linearity of the support function,
\[
\rho_\Delta(L)=\frac{\sum_{j=1}^m A_j}{\sum_{j=1}^m B_j}.
\]
By \Cref{lem:lambda-formula}, each $B_j=(n+1)\lambda_\Delta(L_j)$ is positive because $L_j$ is not a point. 
Hence $\rho_\Delta(L)$ is a weighted average of $\rho_\Delta(L_1),\dots,\rho_\Delta(L_m)$:
\[
\rho_\Delta(L)\le 
\frac{\sum_{j=1}^m B_j \rho_\Delta(L_j)}{\sum_{j=1}^m B_j} \leq \max_{j\in[m]}\rho_\Delta(L_j).
\qedhere
\]
\end{proof}

\begin{restatetheorem}{thm:DeUnion}[restatement]
If $L = \conv(L_1 \cup L_2)$ where $L_1,L_2 \subset \R^n$ are non-singleton compact convex sets, then
\[
\rho_\Delta(L) \le \rho_\Delta(L_1) + \rho_\Delta(L_2) + 1.
\]
\end{restatetheorem}

\begin{proof}
\label{proof:thm:DeUnion}
Write $\Delta=\Delta_v$ for $v=(v_0,\dots,v_n)$. 
Set
\[
A:=\sum_{i=0}^n h_L(-v_i),
\quad \text{and} \quad
B:=\sum_{i=0}^n h_L(v_i).
\]
For $j=1,2$, set
\[
A_j:=\sum_{i=0}^n h_{L_j}(-v_i),
\quad \text{and} \quad 
B_j:=\sum_{i=0}^n h_{L_j}(v_i),
\]
Because $L=\conv(L_1 \cup L_2)$, 
\[
h_L(u)=\max\{h_{L_1}(u),h_{L_2}(u)\}\qquad \forall u\in\R^n.
\]
Hence,
$B\ge B_1$ and
$B\ge B_2$.
Also, for every compact set $K$ and every $u\in\R^n$,
\[
h_K(u)+h_K(-u)\ge 0.
\]
For fixed $i$ such that $h_{L}(-v_i) = h_{L_1}(-v_i)$ (the other case $h_{L}(-v_i) = h_{L_2}(-v_i)$ is similar),
\begin{align*}
h_{L}(-v_i) &=  h_{L_1}(-v_i)
\\  &\le h_{L_1}(-v_i)+h_{L_2}(-v_i)+h_{L_2}(v_i)
\\  &\le  h_{L_1}(-v_i)+h_{L_2}(-v_i)+h_{L}(v_i).
\end{align*}
Summing over $i$ gives
\[
A\le A_1+A_2+B.
\]
By \Cref{prop:rho-formula},
\begin{align*}
\rho_\Delta(L)
&=\frac{A}{B}
\le \frac{A_1+A_2+B}{B}
\le \frac{A_1}{B_1}+\frac{A_2}{B_2}+1\\
&=\rho_\Delta(L_1)+\rho_\Delta(L_2)+1.
\qedhere
\end{align*}

\end{proof}

\section{Equivalence to the Minkowski measure}

\label{sec:minkowski-equivalence}

This section shows that an affine invariant version of the simplex-based measure is equivalent to the classical Minkowski measure.
Let $K \subset \R^n$ be a non-singleton compact convex set. 
Denote by $\operatorname{aff}(K)$ the affine span of $K$ and by $\operatorname{relint}(K)$ the relative interior of $K$.
Write $\operatorname{aff}(K)$ as
$\operatorname{aff}(K)=o+E_K$ where $E_K$ is a linear subspace
and $o \in K$.
Denote by $k$ the dimension of $E_K$.
We can think of $E_K$ as a $k$-dimensional Euclidean space and of $K-o$ as a subset of this space.
For an Euclidean space $E$, write
\[
\S(E):=\{u\in E:\|u\|_2=1\}.
\]
All statements from \Cref{sec:simplex-measure} apply verbatim in any Euclidean space; below we use them inside $E_K$.

We use the following characterization of Minkowski measure (\Cref{thm:minkowski-slabs}) inside $E_K$.
For a non-singleton compact convex set $K \subset \R^n$ and $o\in\operatorname{relint}(K)$, the Minkowski symmetry of $K$ at $o$ is
\[
m_K(o)=\max_{u\in\S(E_K)}\frac{h_{K-o}(-u)}{h_{K-o}(u)}.
\]
The Minkowski measure of $K$ is
\[
m^*(K)=\inf_{o\in\operatorname{relint}(K)} m_K(o).
\]

For full-dimensional $K$, this is the classical Minkowski measure of symmetry.
For lower-dimensional $K$, it is the same quantity in the affine hull of $K$.

The following is an affine invariant version of the simplex-based measure.
\begin{defn}
For a non-singleton compact convex set $K \subset \R^n$, let $\mathscr S(K)$ denote the family of nondegenerate simplices in $E_K$ whose barycenter is $0$, and define
\[
\alpha(K):=\sup_{\Delta\in\mathscr S(K)}\rho_\Delta(K).
\]
\end{defn}

\begin{corollary}\label{cor:alpha-formula}
Let $K \subset \R^n$ be a non-singleton compact convex set, let $E=E_K$, let $k=\dim E_K$, and let $o\in \operatorname{aff}(K)$.
Define
\[
V(E):=\Bigl\{(v_0,\dots,v_k)\in E^{k+1}:\ \sum_{i=0}^k v_i=0 \Bigr\}.
\]
Then,
\[
\alpha(K)=\max_{v\in V(E)}
\frac{\sum_{i=0}^k h_{K-o}(-v_i)}{\sum_{i=0}^k h_{K-o}(v_i)}.
\]
\end{corollary}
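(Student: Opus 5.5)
The plan is to reduce \Cref{cor:alpha-formula} to \Cref{lem:simplex} and \Cref{prop:rho-formula} applied inside the $k$-dimensional Euclidean space $E=E_K$. Then we pass from nondegenerate tuples to all of $V(E)$ by a density and compactness argument. That last step is also what turns the supremum into a maximum.

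First, fix $o\in\operatorname{aff}(K)$, so that $K-o\subset E$ is a compact convex set of full dimension in $E$. Let $V^\circ(E)\subset V(E)$ be the set of nondegenerate tuples. By \Cref{lem:simplex} applied in $E$, the map $v\mapsto\Delta_v$ is a bijection from $V^\circ(E)$ onto $\mathscr S(K)$. By \Cref{prop:rho-formula}, also applied in $E$, we have $\rho_{\Delta_v}(K)=F(v)$, where
\[
F(v):=\frac{\sum_{i=0}^k h_{K-o}(-v_i)}{\sum_{i=0}^k h_{K-o}(v_i)}.
\]
Since $\rho$ is translation invariant and \Cref{lem:o-independent} shows that numerator and denominator do not depend on $o$, the choice of $o$ is irrelevant. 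Hence $\alpha(K)=\sup_{v\in V^\circ(E)}F(v)$.

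Second, I make sense of $F$ on all of $V(E)\setminus\{0\}$. The tuple $v=0$ must be excluded, and the statement is to be read that way. Take $v\in V(E)$ with some $v_i\neq 0$. Subadditivity of support functions together with $\sum_j v_j=0$ gives
\[
\sum_j h_{K-o}(v_j)\ \ge\ h_{K-o}(v_i)+h_{K-o}(-v_i)\ >\ 0.
\]
The strict inequality holds because $K-o$ spans $E$ and $0\ne v_i\in E$, so the width of $K$ in direction $v_i$ is positive. Thus $F$ is well defined on $V(E)\setminus\{0\}$. It is also continuous there, since support functions are continuous, and it is invariant under positive scaling of $v$.

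Third, restrict $F$ to the compact set $\{v\in V(E):\sum_i\|v_i\|=1\}$. There $F$ is continuous, so it attains its maximum. Nondegeneracy is an open dense condition in the linear space $V(E)$: it is the nonvanishing of finitely many $k\times k$ determinants, each of which is a polynomial not identically zero on $V(E)$, as the regular simplex shows. Hence $V^\circ(E)$ is dense in this compact set. Consequently
\[
\sup_{V^\circ(E)}F=\max_{V(E)\setminus\{0\}}F,
\]
which gives the claimed formula for $\alpha(K)$.

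The main obstacle is this passage to degenerate tuples. It requires the positivity estimate for the denominator, so that $F$ stays continuous up to the degenerate tuples, and the density of $V^\circ(E)$ in $V(E)$. Once these two facts are in place, the rest is a direct combination of the earlier lemmas.
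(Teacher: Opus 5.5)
Your proof is correct and follows the same route as the paper. You identify $\alpha(K)$ with the supremum of the quotient over nondegenerate tuples via \Cref{lem:simplex} and \Cref{prop:rho-formula} in $E$, then use positivity of the denominator, scale invariance, compactness and density of nondegenerate tuples to get a maximum over $V(E)$. The differences are minor: you get positivity from subadditivity and the positive width of $K$ in every direction of $E$, whereas the paper shifts the base point to some $z\in\operatorname{relint}(K)$ via \Cref{lem:o-independent} so that $h_{K-z}(v_i)>0$ for $v_i\neq 0$; and you make explicit the exclusion of $v=0$, which the paper leaves implicit.
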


\begin{proof}
By the results of Section~\ref{sec:simplex-measure},
\[
\alpha(K)=
\sup_{\substack{v\in V(E)\\ v\ \text{nondegenerate}}}
\frac{\sum_{i=0}^k h_{K-o}(-v_i)}{\sum_{i=0}^k h_{K-o}(v_i)}.
\]
Choose $z\in\operatorname{relint}(K)$. By Lemma~\ref{lem:o-independent} applied in $E$,
\[
\sum_{i=0}^k h_{K-o}(v_i)=\sum_{i=0}^k h_{K-z}(v_i).
\]
Since $0\in \operatorname{int}_{E}(K-z)$, we have $h_{K-z}(u)>0$ for every $u\in \S(E)$. Hence, the quotient defines a continuous function on a compact set (because $\rho_{\Delta_{\lambda v}}(K)
= \rho_{\Delta_{v}}(K)$ for $\lambda >0$). Since the 
nondegenerate tuples are dense in $V(E)$, the supremum above is the maximum over all $v\in V(E)$.
\end{proof}

\begin{lemma}\label{lem:caratheodory}
Let $K\subset \R^n$ be a non-singleton compact convex set, let $E=E_K$, and assume $0\in\operatorname{relint}(K)$ is a Minkowski center. Put $m:=m_K(0)=m^*(K)$ and define
\[
A:=\{u\in\S(E):\ h_{K}(-u)=m\,h_{K}(u)\}.
\]
Then, $0\in\conv(A)$.
\end{lemma}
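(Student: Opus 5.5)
We argue by contradiction, using a separating hyperplane: if $0\notin\conv(A)$, then moving the center slightly in a suitable direction decreases $m_K$, contradicting the minimality of $m=m^*(K)$ at $0$. First we record the basic facts. Since $0\in\operatorname{relint}(K)$, the function $h_K$ is positive and continuous on $\S(E)$. Hence $g(u):=h_K(-u)/h_K(u)$ is continuous on the compact set $\S(E)$, and by the characterization of $m_K$ via \Cref{thm:minkowski-slabs}, its maximum equals $m$. So $A$ is exactly the nonempty, compact set of maximizers of $g$. If $m=1$, then $K$ is centrally symmetric about $0$, so $A=\S(E)$ and $0\in\conv(A)$ trivially. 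We may therefore focus on the general case, where $m\ge 1$ and $A$ is compact.

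Suppose $0\notin\conv(A)$. Since $\conv(A)$ is compact and convex in $E$, there is $w\in E$ with $\ip{w}{u}\ge\delta>0$ for all $u\in A$. For small $t>0$ we consider the center $o_t:=-tw\in\operatorname{relint}(K)$. Using $h_{K-o}(u)=h_K(u)-\ip{o}{u}$, we get
\[
\frac{h_{K-o_t}(-u)}{h_{K-o_t}(u)}=\frac{h_K(-u)-t\ip{w}{u}}{h_K(u)+t\ip{w}{u}}.
\]
For $u\in A$ this equals $\frac{m h_K(u)-t\ip{w}{u}}{h_K(u)+t\ip{w}{u}}$, which is strictly less than $m$ because $m\ge1$ and $\ip{w}{u}>0$. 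To make this uniform, note that by continuity and compactness there is an open neighborhood $U\subset\S(E)$ of $A$ on which $\ip{w}{u}\ge\delta/2$. On $U$, the ratio is at most $\frac{g(u)h_K(u)-t\delta/2}{h_K(u)+t\delta/2}\le m-ct$, where $c>0$ depends on $\delta$ and on the bounds of $h_K$ on $\S(E)$. On the compact complement $\S(E)\setminus U$, $g$ attains a maximum $m'<m$. The ratio depends continuously on $t$ uniformly in $u$, since $h_K$ is bounded above and away from $0$ on $\S(E)$. So for small $t$ it stays below $(m+m')/2<m$ there.

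It follows that $m_K(o_t)<m=m^*(K)$ for small $t>0$, contradicting the fact that $0$ is a Minkowski center. Hence $0\in\conv(A)$. The step needing the most care is the uniform estimate near $A$. Inequalities that hold only pointwise on $A$ do not directly bound the maximum over the sphere, which is why we need the neighborhood $U$ together with the strict gap $m'<m$ on its complement. A cleaner alternative is a Danskin-type argument for the maximum of a family of functions: the directional derivative of $o\mapsto m_K(o)$ at $0$ in direction $-w$ equals
\[
\max_{u\in A}\frac{-\ip{w}{u}\,(1+m)}{h_K(u)}<0,
\]
which yields the same contradiction.
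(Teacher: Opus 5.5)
Your proof is correct and follows the same route as the paper. Both separate $0$ from $\conv(A)$, move the center to $-tw$, and split $\S(E)$ into a neighborhood of $A$, where $\langle w,u\rangle>0$, and its compact complement, where the ratio stays bounded away from $m$. The only difference is that the paper works with the difference $h_{K-o}(-u)-m\,h_{K-o}(u)=a(u)-t\,b(u)$ rather than the ratio, so on the neighborhood it needs only a sign argument instead of your quantitative bound $m-ct$; that bound implicitly uses that the ratio decreases in $\langle w,u\rangle$, i.e.\ that $h_K(u)+h_K(-u)>0$.
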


\begin{proof}
 Assume towards a contradiction that $0\notin\conv(A)$.  
 By strict separation in the Euclidean space $E$, there are $p\in E$ and $\delta>0$ such that
\[
\langle p,u\rangle\ge\delta \qquad \forall u\in \conv(A).
\]
For small $t>0$, let $o:=-tp\in \operatorname{int}_{E}(K)=\operatorname{relint}(K)$. We will show that $o$ is a ``better candidate'' for a Minkowski center than $0$
(hence the contradiction). For every $u \in \S(E),$
\[
h_{K-o}(u)=h_K(u)+t\langle p,u\rangle
\quad \text{and} \quad
h_{K-o}(-u)=h_K(-u)-t\langle p,u\rangle,
\]
so
\begin{align*}
h_{K-o}(-u)-m h_{K-o}(u)
&= \bigl(h_K(-u)-mh(u)\bigr)-(1+m)t\langle p,u\rangle
\\ &:=
a(u) - tb(u),
\end{align*}
where
\[
a(u):= h_K(-u)-mh(u),
\quad \text{and} \quad
b(u):=(1+m)\langle p,u\rangle.
\]
Consider the sign of the expression $a(u) - tb(u)$.
For $u\in A$, it is negative, because $a(u)=0$ and $b(u)>0$.
For $u\in \S(E)\setminus A$, we have $a(u)<0$. By
continuity, $b(u)>0$ on some open neighborhood $U$ of $A$. On
the compact set $\S(E)\setminus U$, the function $a(u)$ is uniformly bounded
away from $0$.
It follows that, for a sufficiently small $t>0$, we have
$m_K(o) < m$.
\end{proof}

\begin{restatetheorem}{thm:main-equality}[restatement]
For every non-singleton compact convex set $K\subset \R^n$,
\[
\alpha(K)=m^*(K).
\]
\end{restatetheorem}

\begin{proof}
Let $E=E_K$ and $k=\dim K$. Choose a Minkowski center $o\in\operatorname{relint}(K)$ and translate so that $o=0$ so that $K\subset E$ and
\[
m:=m^*(K)=\max_{u\in\S(E)}\frac{h(-u)}{h(u)},
\]
where $h:=h_K$.
By Corollary~\ref{cor:alpha-formula},
\[
\alpha(K)=\max_{v=(v_0,\dots,v_k)\in V(E)}
\frac{\sum_{i=0}^k h(-v_i)}{\sum_{i=0}^k h(v_i)}.
\]
Because $h(-u)\le m h(u)$ for every $u\in E$, 
\[\alpha(K)\le m.\]
For the reverse inequality, let
\[
A:=\{u\in\S(E):\ h(-u)=m h(u)\}.
\]
By Lemma~\ref{lem:caratheodory}, $0\in\conv(A)$. By Carathéodory's theorem in the $k$-dimensional Euclidean space $E$, there exist $u_0,\dots,u_k\in A$ and $\lambda_0,\dots,\lambda_k\ge0$ such that $\sum_{i=0}^k \lambda_i=1$ and
\[
\sum_{i=0}^k \lambda_i u_i=0.
\]
Set
$w_i:=\lambda_i u_i$ so that
$w=(w_0,\dots,w_k)\in V(E)$. Since $u_i\in A$,
\[
h(-w_i)=\lambda_i h(-u_i)=m\lambda_i h(u_i)=m h(w_i)
\qquad \forall i.
\]
Summing gives
\[
\sum_{i=0}^k h(-w_i)=m\sum_{i=0}^k h(w_i).
\]
Corollary~\ref{cor:alpha-formula} implies that $\alpha(K)\ge m$.
\end{proof}

\section{Stability of Minkowski measure}
\label{sec:stability}

Recall that $\lambda_K(L)$ is the size of the smallest homothet of $K$ containing $L$.
Define the inner coefficient of $L$ with respect to $K$ by
\[
\mu_K(L)=\sup\{\lambda_K(H): H\in \cH_K,\ H\subseteq L\}.
\]
In words, it is the size of the largest homothet of $K$ contained in $L$. 
For a simplex $\Delta$ and a convex body $L$, set
\[
d_\Delta(L):=\frac{\lambda_\Delta(L)}{\mu_\Delta(L)}.
\]
For a non-singleton compact convex set $L$, define also
\[
d_{\rm BM}(L,\Delta):=\inf_{\Delta\in\mathscr S(L)} d_\Delta(L).
\]
Equivalently, $d_{\rm BM}(L,\Delta)$ is the Banach--Mazur distance from $L$ to the class of simplices in its affine hull.
In the following two subsections we relate $d_\Delta$ to the simplex-based measure $\rho_\Delta$.

The main result of this section is the following comparison between
$\rho_\Delta$ and $d_\Delta$.

\begin{restatetheorem}{thm:banach}[restatement]
Let $\Delta \subset \R^n$ be a simplex and let $L \subset \R^n$ be a convex
body. Then
\[
\frac{n}{d_\Delta(L)}
\le
\rho_\Delta(L)
\le
n-1+\frac{1}{d_\Delta(L)}.
\]
\end{restatetheorem}

The lower and upper bounds (\Cref{lem:IOtoE} and \Cref{lem:IOe-third}) are proved separately in the next two subsections.
For the stability estimate we only use the upper bound.

\begin{remark*}
    
Since all simplices of a fixed dimension are affinely equivalent, it suffices in the proofs below to work with the model simplex $\Delta^n$.

\end{remark*}

\subsection{A lower bound}

\begin{lemma}
\label{lem:IOtoE}
Let $\Delta=\Delta^n$ and let $L \subset \R^n$ be a convex body.
Then,
\[
\rho_\Delta(L) \ge \frac{n}{d_\Delta(L)}.
\]
\end{lemma}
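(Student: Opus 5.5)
The plan is to compare $L$ with the largest homothet of $\Delta$ inside it, and to use that $\lambda_{-\Delta}$ is monotone under inclusion. Since $\rho_\Delta(L)=\lambda_{-\Delta}(L)/\lambda_\Delta(L)$ and $d_\Delta(L)=\lambda_\Delta(L)/\mu_\Delta(L)$, the claimed inequality is equivalent to
\[
\lambda_{-\Delta}(L)\ \ge\ n\,\mu_\Delta(L).
\]
So it suffices to show that every homothet $H=x+\mu\Delta\subseteq L$ satisfies $\lambda_{-\Delta}(L)\ge n\mu$. Taking the supremum over such $H$ then gives the claim; we do not need the supremum defining $\mu_\Delta(L)$ to be attained.

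\textbf{Step 1 (monotonicity and scaling).} If $H\subseteq L$, then any homothet of $-\Delta$ containing $L$ also contains $H$. Hence $\lambda_{-\Delta}(L)\ge\lambda_{-\Delta}(H)$. Moreover, $\lambda_{-\Delta}$ is translation invariant and positively homogeneous, so
\[
\lambda_{-\Delta}(x+\mu\Delta)=\mu\,\lambda_{-\Delta}(\Delta).
\]

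\textbf{Step 2 (compute $\lambda_{-\Delta}(\Delta)=n$).} Write $\Delta=\Delta_v$ with $v\in V$, as in \Cref{lem:simplex}. Then $-\Delta=\Delta_{-v}$ with $-v\in V$. By \Cref{lem:lambda-formula},
\[
\lambda_{-\Delta}(\Delta)=\frac1{n+1}\sum_{i=0}^n h_{\Delta_v}(-v_i).
\]
Recall from the proof of \Cref{lem:simplex} that the vertices $q_j$ satisfy $\langle v_i,q_j\rangle=1$ for $j\ne i$ and $\langle v_i,q_i\rangle=-n$. Therefore
\[
h_{\Delta_v}(-v_i)=\max_j\bigl(-\langle v_i,q_j\rangle\bigr)=n,
\]
and so $\lambda_{-\Delta}(\Delta)=n$. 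Combining this with Step 1 gives $\lambda_{-\Delta}(L)\ge n\mu$ for every admissible $\mu$, which yields the lemma after dividing by $\lambda_\Delta(L)>0$.

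There is no real obstacle here. The only point requiring care is the evaluation $\lambda_{-\Delta}(\Delta)=n$, which is exactly the identity $\rho_\Delta(\Delta)=n$ mentioned in the introduction. It follows directly from \Cref{lem:lambda-formula} together with the vertex computation in \Cref{lem:simplex}.
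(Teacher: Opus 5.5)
Your proof is correct and is essentially the paper's argument: the paper also compares $L$ with a largest inscribed homothet $t+\mu\Delta$ and uses $\lambda_{-\Delta}(\Delta)=n$. The paper phrases monotonicity and the translation/scaling step through the support-function sums $\sum_i h(-v_i)$, where $\sum_i v_i=0$ cancels the translation. Your version is slightly cleaner in two small ways: it does not need the supremum defining $\mu_\Delta(L)$ to be attained, and it verifies $\lambda_{-\Delta}(\Delta)=n$ explicitly from the vertex computation, whereas the paper simply asserts it.
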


\begin{proof}
Write $\Delta$ as $\Delta = \Delta_v$ for $v=(v_0,\dots,v_n)\in V$ using \Cref{lem:simplex}.
Let 
$K=t+\mu \Delta \in \cH_\Delta$ be such that $K \subseteq L$ where $\mu:=\mu_{\Delta}(L)$.
In words, $K$ is a largest homothet of $\Delta$ inside $L$.
Set
\[
A:=\sum_{i=0}^n h_L(-v_i)
\quad \text{and} \quad
B:=\sum_{i=0}^n h_L(v_i).
\]
By \Cref{prop:rho-formula} and \Cref{lem:lambda-formula},
\[
\rho_\Delta(L)=\frac{A}{B}
\quad\text{and}\quad
B=(n+1)\lambda_\Delta(L).
\]
Since $K\subseteq L$, we have $h_L(-v_i)\ge h_K(-v_i)$ for every $i$, which implies
\[
A\ge \sum_{i=0}^n h_K(-v_i).
\]
Because $\sum_i v_i=0$,
\[
\sum_{i=0}^n h_K(-v_i)
=
\sum_{i=0}^n \bigl(\langle -v_i,t\rangle+\mu h_{\Delta}(-v_i)\bigr)
=\mu\sum_{i=0}^n h_{\Delta}(-v_i),
\]
By \Cref{lem:lambda-formula},
\[
n = \lambda_{-\Delta}(\Delta)=\frac1{n+1}\sum_{i=0}^n h_\Delta(-v_i).
\]
Hence,
$A\ge n(n+1)\mu$. Finally,
\[
\rho_\Delta(L)=\frac{A}{B}
\ge \frac{n(n+1)\mu}{(n+1)\lambda_\Delta(L)}
= \frac{n}{d_\Delta(L)}.
\qedhere
\]
\end{proof}

\subsection{An upper bound}

\begin{lemma}
\label{lem:IOe-third}
For every convex body $L \subset \R^n$,
\[
\rho_\Delta(L) \le n-1 + \frac{1}{d_\Delta(L)}.
\]
\end{lemma}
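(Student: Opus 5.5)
We normalize by translating and scaling so that $\lambda_\Delta(L)=1$ and $L\subseteq\Delta=\Delta_v$, where $v=(v_0,\dots,v_n)\in V$ is given by \Cref{lem:simplex}. By \Cref{lem:lambda-formula}, $\sum_i h_L(v_i)=n+1$. Since $h_L(v_i)\le h_\Delta(v_i)=1$ for every $i$, this forces $h_L(v_i)=1$ for all $i$. So $L$ touches every facet of $\Delta$, and $B:=\sum_i h_L(v_i)=n+1$. By \Cref{prop:rho-formula} it then suffices to prove
\[
A:=\sum_{i=0}^n h_L(-v_i)\le (n+1)(n-1+\mu),\qquad \mu:=\mu_\Delta(L).
\]
Recall $h_\Delta(-v_i)=n$, since the vertex $q_i$ satisfies $\langle v_i,q_i\rangle=-n$. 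Define the gap $g_i:=n-h_L(-v_i)\ge 0$. The goal becomes
\[
\sum_{i=0}^n \frac{g_i}{n+1}\ge 1-\mu .
\]

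Next we interpret the gaps geometrically. The region $T_i:=\{x\in\Delta:\langle -v_i,x\rangle\ge h_L(-v_i)\}$ is the corner of $\Delta$ at $q_i$ cut off by the supporting hyperplane of $L$ parallel to the facet opposite $q_i$. The width of $\Delta$ in direction $v_i$ is $n+1$, so $T_i$ is a homothet of $\Delta$ of size exactly $s_i:=g_i/(n+1)$. The inequality to prove therefore reads $\sum_i s_i+\mu_\Delta(L)\ge 1$. This is a Kadets-type statement: $\Delta$ is "covered" by the $n+1$ corner simplices together with $L$, and the inner sizes of the pieces must sum to at least the size of $\Delta$. We would invoke the theorem of Akopyan and Karasev~\cite{akopyan2012kadets}: if a simplex is covered by convex bodies $K_1,\dots,K_m$, then $\sum_j \mu_\Delta(K_j)\ge 1$. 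Here $\mu_\Delta(T_i)=s_i$.

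The main obstacle is that $\Delta\setminus\bigcup_i T_i$ equals $C:=\Delta\cap\bigcap_i\{\langle -v_i,x\rangle\le h_L(-v_i)\}$, which contains $L$ but may be strictly larger. So the corners and $L$ do not literally cover $\Delta$. The fact we have not yet used is that $L$ touches every facet of $\Delta$. We would try first to replace each corner $T_i$ by a larger convex set $T_i'\supseteq T_i$ that still has $\mu_\Delta(T_i')=s_i$, chosen so that $T_0',\dots,T_n'$ together with $L$ cover $\Delta$. A natural candidate is the closure of the part of $\Delta\setminus L$ "visible" toward $q_i$, built from the contact points of $L$ with the facets. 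If that fails, we would apply the Akopyan–Karasev theorem in its stronger form, which (as we recall it) only needs the pieces to cover the boundary structure, or which allows a covering of $\Delta$ by the $T_i$ and a convex set whose inscribed simplex is controlled by that of $L$. The decisive point is that any homothet of $\Delta$ inside $C$ can be pushed into $L$ using the facet contacts.

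Once the covering inequality $\sum_i s_i+\mu\ge1$ is in hand, the computation above yields $A=\sum_i(n-g_i)\le (n+1)(n-1+\mu)$. Dividing by $B=n+1$ gives $\rho_\Delta(L)\le n-1+\mu=n-1+1/d_\Delta(L)$.
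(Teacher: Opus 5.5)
Your normalization and reduction are correct and coincide with the paper's. With $\lambda_\Delta(L)=1$ and $L\subseteq\Delta$, the lemma is equivalent to $\mu_\Delta(L)+\sum_i s_i\ge 1$, where $s_i$ is the size of the corner of $\Delta$ at $q_i$ cut off by the supporting hyperplane of $L$ parallel to the opposite facet. These $s_i$ are the paper's $\beta_i$, with $\sum_i\beta_i=n-\rho_\Delta(L)$.

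The gap is that you never produce a covering to which a Kadets-type theorem applies, and the step you call decisive is false. Take $n=2$, let $p$ be the midpoint of the edge opposite $q_0$, and let $L=(1-\eps)[q_0,p]+\eps\Delta$.
\begin{itemize}
\item $L\subseteq\Delta$ and $L$ touches every facet.
\item $\mu_\Delta(L)=\eps$ (compare widths perpendicular to $[q_0,p]$).
\item $\rho_\Delta(L)=1+\eps$, so the lemma is an equality here.
\end{itemize}
Yet $C$ contains the corner $q_0+\tfrac12(\Delta-q_0)$, so $\mu_\Delta(C)\ge\tfrac12$. The covering $\Delta=C\cup T_0\cup T_1\cup T_2$ therefore only yields $\rho_\Delta(L)\le 1+\mu_\Delta(C)$, which is at least $\tfrac32$. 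Homothets in $C$ cannot be pushed into $L$.

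There is a second problem. \Cref{thm:inductive-kadets}, the form of the Akopyan--Karasev theorem the paper uses, requires an \emph{inductive} covering by closed convex sets, not an arbitrary covering as you state it. Your ``visible'' regions need not even be convex.

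The missing idea is to enlarge $L$ in a controlled way rather than to shrink $C$.
\begin{enumerate}
\item Let $\Delta_i\subseteq L$ be a largest homothet of $\Delta$, and let $U=\{u\in\S^{n-1}:h_L(u)=h_{\Delta_i}(u)\}$. Maximality forces $0\in\conv(U)$; otherwise $\Delta_i$ could be translated into $\operatorname{int}L$ and then enlarged.
\item By Carath\'eodory there are $u_1,\dots,u_k\in U$, $k\le n+1$, with $0$ in their positive hull. The polyhedron $A_0=\bigcap_j\{x:\langle u_j,x\rangle\le h_L(u_j)\}\supseteq L$ still has $\mu_\Delta(A_0)=\mu_\Delta(L)$ (\Cref{lem:inscribed}).
\item Cover $\Delta$ by $A_0\cap\Delta$ and the caps $A_j=\Delta\cap\{x:\langle u_j,x\rangle\ge h_L(u_j)\}$. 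These caps are the correct version of your $T_i'$, with $A_0$ in place of $L$. This covering by hyperplane cuts is inductive.
\item Let $q_{i(j)}$ be the vertex of $\Delta$ maximizing $\langle u_j,\cdot\rangle$. Comparing widths in direction $u_j$ gives $\mu_\Delta(A_j)\le r_j$, where $r_j$ is the size of the corner at $q_{i(j)}$ whose lowest level in direction $u_j$ is $h_L(u_j)$.
\item $r_j\le s_{i(j)}$: otherwise the corner of size $s_{i(j)}$, which contains a point of $L$, would lie strictly above that level.
\item After reducing to smooth strictly convex $L$, distinct $u_j$ touch distinct vertices of $\Delta_i$. So the indices $i(j)$ are distinct and $\sum_j\mu_\Delta(A_j)\le\sum_i s_i$.
\end{enumerate}
\Cref{thm:inductive-kadets} then gives $\mu_\Delta(L)+\sum_i s_i\ge 1$.
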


Before proving the lemma, we need some preparation.
A supporting half-space $T$ of a convex body $L \subseteq \R^n$ is of the form
\[ T= \{x \in \R^n : \ip{u}{x} \leq h_L(u) \}\]
for some $u \in \S^{n-1}$.
It always holds that $L \subseteq T$.

\begin{lemma}
\label{lem:inscribed}
Let $L\subset\mathbb{R}^n$ be a convex body.
Then, there exist $k \leq n+1$ supporting half-spaces $T_1,\ldots,T_k$ of $L$ such that
\[
L \subseteq A := \bigcap_{j \in [k]} T_j
\quad 
\text{and}
\quad 
\mu_{\Delta}(L) = \mu_{\Delta}(A).
\]
\end{lemma}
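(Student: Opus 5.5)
The plan is to view the largest inscribed homothet as the optimum of a semi-infinite linear program, and then use Helly's theorem to cut the infinitely many constraints down to $n+1$. Since $h_{x+\lambda\Delta}(u)=\langle u,x\rangle+\lambda h_\Delta(u)$, we have $x+\lambda\Delta\subseteq L$ if and only if
\[
\langle u,x\rangle+\lambda h_\Delta(u)\le h_L(u)\qquad \forall u\in\S^{n-1}.
\]
For each $u\in\S^{n-1}$ let $C_u\subset\R^{n+1}$ be the closed half-space of pairs $(x,\lambda)$ satisfying this inequality, and let $T_u$ be the supporting half-space of $L$ with outer normal $u$. Then $\mu:=\mu_\Delta(L)=\max\{\lambda:(x,\lambda)\in\bigcap_u C_u\}$. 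For any finite set of directions $U$, the set $\bigcap_{u\in U}T_u$ contains $L$, so its largest inscribed homothet has size at least $\mu$. It therefore suffices to find at most $n+1$ directions $u_1,\dots,u_k$ such that $A=\bigcap_j T_{u_j}$ contains no homothet of $\Delta$ of size larger than $\mu$.

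\emph{Step 1 (Helly for each $\eps>0$).} Fix $\eps>0$ and add the closed convex set $D_\eps=\{(x,\lambda):\lambda\ge\mu+\eps\}$. The family $\{C_u\}_u\cup\{D_\eps\}$ in $\R^{n+1}$ has empty intersection. I would make the sets compact by also intersecting with a large ball in $(x,\lambda)$-space; this is harmless, since any feasible $(x,\lambda)$ with $\lambda\ge 0$ has $x+\lambda\Delta\subseteq L$, so $x$ and $\lambda$ are bounded. The compact version of Helly's theorem in $\R^{n+1}$ then yields $n+2$ members with empty intersection. The set $D_\eps$ must be among them, since otherwise $n+2$ of the sets $C_u$ would have empty intersection, which is impossible because $(t,\mu)$ from an optimal homothet lies in all of them. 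We obtain directions $u^\eps_1,\dots,u^\eps_{n+1}$ such that $\bigcap_j T_{u^\eps_j}$ contains no homothet of size $\ge\mu+\eps$. If fewer sets suffice, we repeat a direction.

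\emph{Step 2 (limit $\eps\to0$).} By compactness of $(\S^{n-1})^{n+1}$, pass to a subsequence with $u^\eps_j\to u_j$, and set $A=\bigcap_{j\le n+1}T_{u_j}$. Suppose $A$ contained $x+\mu'\Delta$ with $\mu'>\mu$. Since $L$ is a convex body, pick an interior point $c$ of $L$ and a small $r>0$ with $c+r B\subseteq L\subseteq A$. Mixing the two, $(1-s)(x+\mu'\Delta)+s(c+rB)$ lies in $A$ and contains a homothet of size $(1-s)\mu'>\mu$ that has positive slack $\ge sr$ in every constraint $T_{u_j}$. Because $h_L$ and $h_{x+\lambda\Delta}$ are continuous in $u$, this strict containment persists for the nearby normals $u^\eps_j$ once $\eps$ is small. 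That would contradict Step 1 for all $\eps<(1-s)\mu'-\mu$. Hence $\mu_\Delta(A)\le\mu$, and combined with $L\subseteq A$ we get equality. Note that $A$ may be unbounded, but the argument above never needed boundedness of $A$.

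\emph{Main obstacle.} The delicate step is Step 2: ensuring that the limit of the Helly witnesses still blocks enlargement, including the possibility that $A$ is unbounded. The slack argument using an interior ball of $L$ is what I would try first. An alternative is a direct KKT-type argument: take the contact normals $N$ of an optimal homothet and show that $0\in\conv\{(u,h_\Delta(u)) : u\in N\}$-type separation fails to allow enlargement, then apply Carathéodory's theorem in $\R^{n+1}$ in the spirit of \Cref{lem:caratheodory}. This route has a caveat: Carathéodory in $\R^{n+1}$ gives up to $n+2$ vectors, and one needs the extra observation that the last coordinate can be normalized away to get down to $n+1$.
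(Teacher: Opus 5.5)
Your Helly-plus-limit strategy is viable, and Step~2 is fine once Step~1 is correct, but Step~1 does not deliver what you claim. Intersecting every member with a ball $B_R$ is not harmless, because the ball becomes a hidden constraint shared by all members. Helly therefore only returns directions $u^\eps_1,\dots,u^\eps_{n+1}$ such that no $(x,\lambda)$ with $\lambda\ge\mu+\eps$ \emph{and} $|(x,\lambda)|\le R$ satisfies the selected constraints. Your justification (feasible points are bounded) concerns the whole family $\{C_u\}$, not the $n+1$ selected members. Their common feasible set can be unbounded and may reach $\lambda\ge\mu+\eps$ only outside $B_R$.

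For instance, take $L=\Delta$, a facet normal $u_1$, and $u_2$ a tilt of $-u_1$ that is small compared with $\eps/R$. Within $B_R$ the wedge $T_{u_1}\cap T_{u_2}$ is too narrow for a homothet of size $1+\eps$, yet far away it contains arbitrarily large homothets. So $D_\eps\cap B_R$, $C_{u_1}\cap B_R$, $C_{u_2}\cap B_R$ already have empty intersection, and nothing prevents Helly from returning such a configuration. Hence the assertion that $\bigcap_j T_{u^\eps_j}$ contains no homothet of size $\ge\mu+\eps$ is unjustified. Step~2 really needs the unrestricted version: your witness sits at position $(1-s)x+sc$, which may lie far outside $B_R$ since $A$ can be unbounded.

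There are two standard repairs. First, you can drop the ball and use Helly for closed convex sets some finite subfamily of which has compact intersection. If $w_0,\dots,w_n$ positively span $\R^n$, then $D_\eps\cap\bigcap_i C_{w_i}$ is compact: a positive combination of the constraints bounds $\lambda$, and then $x$ is bounded. By the finite intersection property, some finite subfamily of $\{C_u\}\cup\{D_\eps\}$ has empty intersection, and finite Helly in $\R^{n+1}$ yields $n+2$ members with genuinely empty intersection. Second, you can keep a fixed $R$ and, in Step~2, mix a small multiple of $x+\mu'\Delta$ with the optimal $t+\mu\Delta$ and with $c+rB$. This gives a witness of size $>\mu$ with positive slack, located near $(t,\mu)$ and hence inside $B_R$.

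For comparison, the paper takes the direct route you list as an alternative, and there the dimension count causes no trouble. Let $U$ be the set of contact normals of an optimal $\Delta_i$. The paper shows $0\in\conv(U)$; otherwise a small translation along a separating direction pushes $\Delta_i$ into $\operatorname{int}L$, allowing enlargement. Carath\'eodory in $\R^n$ then gives $u_1,\dots,u_k\in U$, $k\le n+1$, with $\sum_j\alpha_ju_j=0$ and $\alpha_j>0$. Finally, $t+(1+\gamma)\Delta_i\subseteq A$ is ruled out by summing $\gamma h_{\Delta_i}(u_j)+\langle t,u_j\rangle\le 0$ with weights $\alpha_j$, against $\sum_j\alpha_jh_{\Delta_i}(u_j)>\sum_j\alpha_j\langle z,u_j\rangle=0$ for $z\in\operatorname{int}\Delta_i$. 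So the $\lambda$-coordinate needs no lifting to $\R^{n+1}$, and there is no limiting argument. Moreover, the selected $u_j$ are contact normals of $\Delta_i$, which the paper uses later in the proof of \Cref{lem:IOe-third}; the Helly route, even repaired, does not provide this.
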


The proof of the lemma uses techniques from the work of Klain \cite{klain2010containment}.
For a convex body $L \subset \R^n$,
we denote by $\Delta_i(L)$
a simplex $H \in \cH_{\Delta^n}$ such that
 $H \subseteq L$ and $\lambda_{\Delta^n}(H) = \mu_{\Delta^n}(L)$
 (it is not necessarily unique).
 In words, $\Delta_i(L)$ is a largest copy of $\Delta$ inside $L$.

\begin{proof}[Proof of \Cref{lem:inscribed}]
Let $\Delta_i = \Delta_i(L)$.
Because $\Delta_i \subseteq L$,
\[
h_L(u) \geq h_{\Delta_i}(u) 
\quad\forall u\in \S^{n-1}.
\]
Let
\[
U = \bigl\{ u\in \S^{n-1} : h_L(u) = h_{\Delta_i}(u)  \bigr\}.
\]

We claim that $0\in \conv(U)$. Otherwise, by convex separation, there exist $\eps>0$ and $v\in \S^{n-1}$ such that
$\langle v,u\rangle < - \eps$ for all $u\in U$.  
In other words, $U$ is contained in the (open) half-space $W:=\{w \in \R^n : \ip{v}{w} < -\eps\}$.
The set $\S^{n-1} \setminus W$ is non-empty, compact and is disjoint from $U$ so there is $\eps' > 0$ such that
\begin{align*}
h_L(u) \geq h_{\Delta_i}(u) + \eps'
\quad\forall u\in \S^{n-1} \setminus W.
\end{align*}
Hence, there is $\delta>0$ such that
\begin{align*}
h_L(u) > h_{\Delta_i}(u) + \delta\langle v,u\rangle = h_{\Delta_i + \delta v}(u)
\quad\forall u\in \S^{n-1}.
\end{align*}
It follows that $\Delta_i + \delta v$ is contained in the interior of $L$, and therefore $L$ contains a larger homothet of~$\Delta$, which is a contradiction. 

Now, by Carath\'eodory's  theorem, there exist $u_1,\dots,u_k \in U$
and $\alpha_1,\ldots,\alpha_k > 0$ for
$2 \leq k\leq n+1$ such that 
\[
\sum_{j} \alpha_j u_j = 0.
\]
Define the supporting half-spaces as (for $j \in [k]$)
\[
T_j
=\{ x : \langle x,u_j\rangle \leq h_L(u_j)\}
\]
and their intersection
\[
A=\bigcap_j T_j \supseteq L.
\]
It remains to prove that
$\mu_{\Delta}(L) = \mu_{\Delta}(A)$.
Because $L \subseteq A$,
we have $\mu_{\Delta}(L) \leq \mu_{\Delta}(A)$.
Assume towards a contradiction
that
\[(1+\gamma) \mu_{\Delta}(L) \leq  \mu_{\Delta}(A)\]
for some $\gamma > 0$ so that 
\[(1+\gamma) \Delta_i + t \subseteq A\]
for some $t \in \R^n$.
For every $j \in [k]$, 
\[
(1+\gamma) h_{\Delta_i}(u_j) + \ip{t}{u_j}
= h_{(1+\gamma) \Delta_i+t}(u_j)
\leq h_A(u_j)
= h_L(u_j) = h_{\Delta_i}(u_j)
\]
so 
\[
\gamma h_{\Delta_i}(u_j) + \ip{t}{u_j} \leq 0 .
\]
Since $\Delta_i$ has non-empty interior, choose
$z\in\operatorname{int}\Delta_i$. Because
$h_{\Delta_i}(u_j)>\langle z,u_j\rangle$ for every~$j$, we deduce the contradiction:
\[
0 = -\frac{1}{\gamma} \sum_j \alpha_j \ip{t}{u_j}
\geq 
\sum_j \alpha_j h_{\Delta_i}(u_j)
>
\sum_j \alpha_j \langle z,u_j\rangle
=0.
\qedhere
\]
\end{proof}

We also use the following Kadets-type result by Akopyan and Karasev~\cite{akopyan2012kadets}.
It is based on the notion of inductive covering (which is defined as follows).
The cover of $\R^n$ by a single set $V_1 = \R^n$ is inductive.
A cover $\R^d = V_1\cup\cdots\cup V_k$ is inductive if $V_1,\ldots,V_k$ are closed convex sets and for each $j \in [k]$, there exists an inductive covering
	\[
	\R^d = W_1\cup\cdots\cup W_{j-1}\cup W_{j+1}\cup\cdots\cup W_k 
	\]
	such that
\[W_\ell \subseteq  V_\ell \cup V_j
\qquad  \forall \ell \neq j.\]
The following\footnote{In \cite{akopyan2012kadets} the theorem is stated for partitions instead of coverings. The remark before definition~5 in that paper states that the theorem holds for coverings as well. In addition, their notation is different from ours: $\mu_{B}(C)$ is denoted by $r_B(C)$.}
 is Theorem 1 in \cite{akopyan2012kadets}.

\begin{theorem}
\label{thm:inductive-kadets}
Let $V_1,\ldots,V_k$ be an inductive covering of $\R^n$.
	Let $B\subset\mathbb{R}^n$ be a convex body, and let $C_j = V_j\cap B$ for each~$j \in [k]$. Then,
	\[
	\sum_j \mu_{B}(C_j) \ge 1.
	\]
\end{theorem}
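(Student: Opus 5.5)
We would prove a stronger, relative statement by induction on the number $k$ of pieces. For every convex body $C\subset\R^n$ and every inductive covering $V_1,\ldots,V_k$ of $\R^n$,
\[
\sum_{j} \mu_B(C\cap V_j)\ \ge\ \mu_B(C),
\]
where a summand with empty interior counts as $0$. The theorem is the case $C=B$, since $\mu_B(B)=1$. The base case $k=1$ is trivial: $V_1=\R^n$, so $C\cap V_1=C$.

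For the induction step we use two facts.
\begin{enumerate}[(a)]
\item Fix $r\ge 0$, a point $b\in\operatorname{int}B$, and the inner parallel body $C\ominus r(B-b):=\{x: x+r(B-b)\subseteq C\}$. Then $\mu_B(C\ominus r(B-b))=\mu_B(C)-r$ whenever $r\le \mu_B(C)$. Indeed, a homothet $y+sB\subseteq C$ gives $y+rb+(s-r)B\subseteq C\ominus r(B-b)$. Conversely, $C\ominus r(B-b) + r(B-b)\subseteq C$, and Minkowski-adding $r(B-b)$ to a homothet of $B$ of size $s'$ gives a homothet of size $s'+r$.
\item Monotonicity: $\mu_B(X)\le\mu_B(Y)$ whenever $X\subseteq Y$.
\end{enumerate}

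The step itself goes as follows. Put $r_\ell:=\mu_B(C\cap V_\ell)$ and suppose, for contradiction, that $\sum_\ell r_\ell<\mu_B(C)$. Choose an index $j$, take the inductive covering $W_\ell$ ($\ell\ne j$) with $W_\ell\subseteq V_\ell\cup V_j$ supplied by the definition, and set $C':=C\ominus r_j(B-b)$, a convex body with $\mu_B(C')=\mu_B(C)-r_j$ by (a). Applying the induction hypothesis to $C'$ and the $W$-covering gives
\[
\sum_{\ell\ne j}\mu_B(C'\cap W_\ell)\ \ge\ \mu_B(C)-r_j .
\]
The argument closes if we can show
\[
\mu_B(C'\cap W_\ell)\ \le\ \mu_B(C\cap V_\ell)=r_\ell \qquad \forall \ell\neq j,
\]
because summing would then yield $\sum_\ell r_\ell\ge \mu_B(C)$. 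In other words, eroding by the size of the largest $B$-homothet in the $V_j$-piece should remove the room that $V_j$ adds to $V_\ell$.

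\textbf{Main obstacle.} The last inequality is the hard step. A homothet $x+s B\subseteq C'\cap W_\ell\subseteq C'\cap(V_\ell\cup V_j)$ must be converted into a homothet of the same size inside $C\cap V_\ell$. For this we would translate it by $r_j(B-b)$-type offsets and argue that it cannot meet $V_j$ substantially. If it did, then $C\cap V_j$ (thickened by $r_j(B-b)$) would contain a homothet larger than $r_j$, contradicting the choice of $r_j$. Convexity of the closed sets $V_\ell$ and $V_j$ is then used to separate the two parts.

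The choice of $j$ and of the base point $b$ matters here. We would first try the two-piece case $k=2$, where $V_1$ and $V_2$ are half-spaces split by a hyperplane; there the claim is the classical Kadets-type statement for a hyperplane cut, and the erosion argument can be checked directly. Then we would use the recursive structure of inductive coverings, which lets us merge $V_j$ into neighbouring pieces, to reduce the general step to repeated two-piece estimates. If the direct inequality fails in general, the fallback is to follow the original argument of Akopyan and Karasev~\cite{akopyan2012kadets}, which this statement is quoted from.
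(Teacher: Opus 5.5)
This statement has no proof in the paper: it is imported as Theorem~1 of \cite{akopyan2012kadets}, with a footnote saying the partition version extends to coverings. Your sketch therefore has to stand on its own, and it has a genuine gap.

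\textbf{The key inequality is unproved, and your heuristic for it is wrong.} The bookkeeping is fine: the relative form, fact (a), and the use of the induction hypothesis on $C'$. But the whole content of the theorem is the inequality $\mu_B(C'\cap W_\ell)\le \mu_B(C\cap V_\ell)$, which you leave open. A homothet in $W_\ell\subseteq V_\ell\cup V_j$ can exceed $r_\ell$ through a \emph{thin} overlap with $V_j$, without $V_j$ containing anything larger than $r_j$.

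Here is a concrete instance. Let $B=C=[0,1]^2$, so $\mu_B$ of an axis-parallel rectangle is its shorter side. Take the strips $V_1=\{x\le 0.4\}$, $V_2=\{0.4\le x\le 0.5\}$, $V_3=\{0.5\le x\le 0.6\}$, $V_4=\{x\ge 0.6\}$, so that $(r_1,r_2,r_3,r_4)=(0.4,0.1,0.1,0.4)$.
\begin{itemize}
\item For $j=3$, the covering $W_1=V_1$, $W_2=V_2\cup V_3$, $W_4=V_4$ is admissible in the definition.
\item Then $C'=[0.1b_1,0.9+0.1b_1]\times[0.1b_2,0.9+0.1b_2]$ contains $[0.4,0.6]\times[0.1,0.9]$ for every $b\in B$, so $\mu_B(C'\cap W_2)=0.2>r_2$.
\item No $b\in\R^2$ rescues this: shrinking this piece pushes $C'\cap W_1$ or $C'\cap W_4$ above $0.4$.
\item The offending square meets $V_3$ only in a $0.1\times 0.2$ rectangle, so no contradiction with $r_3$ arises.
\end{itemize}

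So the inequality can hold only for special choices of $j$, of the merged covering, and of $b$. In this example, $W_4=V_3\cup V_4$ together with $b_1=0$ works. Moreover, one translation $r_jb$ must compensate simultaneously for every $W_\ell$ that absorbs part of $V_j$. You give no rule for making these choices and no proof that a good choice always exists. The ``fallback'' to the original argument concedes exactly this point.

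\textbf{The two-piece reduction does not close the gap either.} If you combine a merge with two-piece estimates, the best you obtain is $\mu_B(C\cap W_\ell)\le r_\ell+\mu_B(C\cap V_j\cap W_\ell)$. You would then need $\sum_{\ell\ne j}\mu_B(C\cap V_j\cap W_\ell)\le r_j$. That is a reverse Kadets inequality, and it is false: split the strip $\{0.45\le y\le 0.55\}$ of $[0,1]^2$ between two neighbours above it, and each part has $\mu_B=0.1=r_j$, so the sum is $0.2>r_j$.

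What two-piece estimates \emph{do} yield, by induction over the cut tree, is the case of coverings that can be shrunk to a partition by successive hyperplane cuts. Shrinking is allowed because $\mu_B(C\cap V)$ is monotone in $V$. This already covers the covering used in the proof of Lemma~\ref{lem:IOe-third}, via the pieces $A'_1,\ T_1\cap A'_2,\ T_1\cap T_2\cap A'_3,\dots,A'_0$. However, inductive coverings are strictly more general. For example, three cones in the plane with a common apex, each of angle less than $\pi$, admit all the required merges but cannot be produced by successive cuts. For the theorem in its stated generality you still need an actual argument, and you have not supplied one.
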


\begin{proof}[Proof of \Cref{lem:IOe-third}]
Let $\Delta = \Delta^n$. 
We assume without loss of generality that $L$ is 
smooth and strictly convex, 
because the collection of such convex bodies is dense. 
We also assume without loss of generality that $\lambda_{\Delta}(L) = 1$ and prove that $\rho_\Delta(L) \le n-1 + \mu_{\Delta}(L).$

Denote by $\Delta_o$ a smallest homothet of $\Delta$ containing $L$
and by $\Delta_i$ a largest homothet of $\Delta$ contained in $L$.
Consider the set
\[
U = \bigl\{ u\in \S^{n-1} : h_L(u) = h_{\Delta_i}(u)  \bigr\}
\] from the proof of \Cref{lem:inscribed}.
Because $L$ is strictly convex, for every $u \in U$, the face $\Delta_i \cap \mathcal{S}(\Delta_i,u)$ of $\Delta_i$ is a vertex which we denote by $g(u) \in V(\Delta_i)$ (otherwise, the boundary of $L$ contains a line segment).
The vector $u$ is a normal to $L$ at $g(u)$. Because $L$ is strictly convex and smooth, the map $u \mapsto g(u)$ is injective. 
Choose the directions $u_j$ and the half-spaces 
\[
T_j
=\{ x : \langle x,u_j\rangle \leq h_L(u_j)\}
\]
from the proof of \Cref{lem:inscribed}. 
Let 
\[A'_0 = \bigcap_{j} T_j\]
and 
\[A'_j = cl( \R^n \setminus T_j)\]
where $cl$ denotes closure. 
Let $A_j = A'_j \cap \Delta_o$ for $j \in \{0,1,\ldots,k\}$.
It follows that 
\begin{align*}
\R^n &= \bigcup_j A'_j
\intertext{and}
\Delta_o &= \bigcup_j A_j.
\end{align*}
By \Cref{lem:inscribed},
\[\mu_{\Delta}(L) = \mu_{\Delta}(A_0).\]

Fix $j \in [k]$ for now. The direction $u_j \in U$ defines the vertex $g_j := g(u_j)$ of $\Delta_i$.
Denote by $g'_j$ the vertex of $\Delta_o$
that corresponds to $g_j$: 
\[\{g'_j\} =  \mathcal{S}(\Delta_o,u_j) \cap \Delta_o.\]
For each $j$, let $x \in \mathcal{S}(\Delta_o,-u_j) \cap \Delta_o$.
Because $\lambda_{\Delta}(L) = 1$ and the interval $[g'_j,x]$ is contained in $\Delta_o$,
\[
\mu_{\Delta}(A_j) \leq \mu_{[g'_j, x]}(A_j) .
\]
The interval $[g'_j,x]$ intersects $\mathcal{S}(\Delta_i,u_j)$ at a point $y$. 
Using \eqref{eq:segment}, we can deduce
\[
\mu_{[g'_j, x]}(A_j) \leq \frac{|[g'_j, y]|}{|[g'_j, x]|}.
\]
The homothety with center $g'_j$ that moves $x$ to $y$ also moves $\Delta_o$, which has
coefficient of homothety $\lambda_{\Delta}(\Delta_o)=1$, to a homothet $H_j$ with coefficient of homothety $\lambda_{\Delta}(H_j) = \frac{|[g'_j, y]|}{|[g'_j, x]|}$. 
Thus
\[\mu_{\Delta}(A_j) \leq \lambda_\Delta(H_j).\]

Now, collect the information on all $j$'s. 
Because $u \mapsto g(u)$ is injective,
$g'_1,\ldots,g'_k$ are distinct. 
Write $\Delta_o=t+\Delta_v$, where $v=(v_0,\dots,v_n)\in V$ is a nondegenerate tuple and the vertex $g'_j$ corresponds to the index $i(j)$. Define
\[
\beta_i:=\frac{h_{\Delta_o}(-v_i)-h_L(-v_i)}{n+1}
\quad \forall i \in \{0,\dots,n\}.
\]
By \Cref{prop:rho-formula} and the normalization $\lambda_\Delta(L)=\lambda_\Delta(\Delta_o)=1$,
\[
\sum_{i=0}^n \beta_i = n-\rho_\Delta(L).
\]
Moreover, $H_j$ is contained in the corner of $\Delta_o$ based at $g'_j$, so $\lambda_\Delta(H_j)\le \beta_{i(j)}$. Therefore
\[
\sum_j \mu_{\Delta}(A_j) \le \sum_j \beta_{i(j)} \le n-\rho_\Delta(L).
\]

Next, we show that we can use the Kadets-type result by Akopyan and Karasev.

\begin{claim}
\label{clm:A'isInd}
The covering $A'_0\cup A'_1\cup \dots \cup A'_k$ of $\R^n$ 
is inductive.
\end{claim}
\begin{proof}
We prove a more general claim.
For $J \subseteq [k]$, define the $J$-covering to be $V_0$ and $V_j$ for $j \in J$, where
\[V_0 = \bigcap_{j \in J} T_j\]
(if $J = \emptyset$ then $V_0 = \R^n$)
and for $j \in J$,
\[V_j = cl( \R^n \setminus T_j).\]
We prove by induction that the $J$-covering is an inductive covering. 
The $\emptyset$-covering is an inductive covering
(it has one set).
It remains to perform the inductive step.
Let $V_0$ and $V_j$ for $j \in J$ be the $J$-covering for $|J|>0$.
We need to verify that the relevant condition holds for every $j \in \{0\} \cup J$.

First, consider the case $j \neq 0$.
Consider the $(J\setminus \{j\})$-covering: 
\[W_0 = \bigcap_{\ell \in J\setminus \{j\}} T_\ell \]
and $W_\ell = V_\ell$ for $\ell \in J\setminus \{j\}$.
For $\ell = 0$, we use that
$V_0 = cl(W_0 \setminus V_{j})$ and get
\[
W_0 \subseteq cl(W_0 \setminus V_{j}) \cup V_{j} = V_0 \cup V_{j}.
\]
For $\ell \in J \setminus \{j\}$,
\[
W_\ell = V_\ell \subseteq V_\ell \cup V_{j}.
\]

Second, consider the case $j = 0$.
Choose some $i \in J$ and
consider the $(J \setminus \{i\})$-covering:
\[W_{i} = \bigcap_{\ell \in J\setminus \{i\}} T_\ell \]
and $W_\ell = V_\ell$ for 
$\ell \in J\setminus \{i\}$.
For $\ell = i$, we use that
$V_0 = cl(W_{i} \setminus V_{i})$ and get
\[W_{i} \subseteq cl(W_{i} \setminus V_{i}) \cup V_{i} = V_0 \cup V_{i}.
\]
For $\ell \in J \setminus \{i\}$,
\[
W_\ell = V_\ell \subseteq V_\ell \cup V_{0}.
\qedhere 
\]
\end{proof}

Putting it all together, by \Cref{clm:A'isInd} and \Cref{thm:inductive-kadets},
\[ 
\mu_{\Delta}(L) + n-\rho_\Delta(L) \geq \sum_{j \in \{0,1,\ldots,k\}} \mu_{\Delta}(A_j) \geq 1,
\]
which is equivalent to $\rho_\Delta(L) \le n-1 + \mu_{\Delta}(L)$.
This proves the lemma under the normalization $\lambda_\Delta(L)=1$, and the general statement follows by scaling.
\qedhere

\end{proof}

\begin{remark*}
Equality is achieved in \Cref{lem:IOe-third}
in the following example of $L$. Denote by $v_1,\ldots,v_{n+1}$ the vertices of $\Delta^{n}$.
Let $a$ and $b$ be two points on the edge $[v_1,v_2]$, 
and let $L$ be $\conv\{a,b,v_3,\dots,v_{n+1}\}$.
\end{remark*}

\subsection{Stability.}

Now we bound the stability of the Minkowski measure. 

\begin{lemma}
\label{lemma:minkowski-bm-simplex}
For every convex body $L\subset \R^n$,
\[
\frac{n}{d_{\rm BM}(L,\Delta)} \le m^*(L) \le n-1+\frac{1}{d_{\rm BM}(L,\Delta)}.
\]
\end{lemma}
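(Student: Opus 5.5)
The plan is to combine \Cref{thm:main-equality} with \Cref{thm:banach}, taking the supremum/infimum over all simplices in $\mathscr S(L)$. Since $L$ is a convex body, $E_L=\R^n$ and $\mathscr S(L)=\mathscr S$ is the family of all nondegenerate simplices with barycenter $0$. By \Cref{thm:main-equality}, $m^*(L)=\alpha(L)=\sup_{\Delta\in\mathscr S}\rho_\Delta(L)$, and by definition $d_{\rm BM}(L,\Delta)=\inf_{\Delta\in\mathscr S}d_\Delta(L)$. Applying \Cref{thm:banach} to each $\Delta\in\mathscr S$ gives
\[
\frac{n}{d_\Delta(L)}\le \rho_\Delta(L)\le n-1+\frac{1}{d_\Delta(L)} \qquad \forall \Delta\in\mathscr S.
\]
Here we use that \Cref{thm:banach} holds for every simplex, not only the model one. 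This is justified by the remark that all simplices are affinely equivalent. Since $\rho_\Delta$ and $d_\Delta$ transform consistently under a common invertible affine map, the bounds for $\Delta^n$ transfer to any $\Delta$.

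For the lower bound, fix $\Delta\in\mathscr S$. Then $m^*(L)\ge \rho_\Delta(L)\ge n/d_\Delta(L)$. Taking the supremum of the right side over $\Delta$ turns $1/d_\Delta$ into $1/\inf_\Delta d_\Delta(L)=1/d_{\rm BM}(L,\Delta)$, which gives $m^*(L)\ge n/d_{\rm BM}(L,\Delta)$.

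For the upper bound, for every $\Delta$ we have $\rho_\Delta(L)\le n-1+1/d_\Delta(L)\le n-1+1/d_{\rm BM}(L,\Delta)$, since $d_\Delta(L)\ge d_{\rm BM}(L,\Delta)$. Taking the supremum over $\Delta$ gives $m^*(L)\le n-1+1/d_{\rm BM}(L,\Delta)$.

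The only delicate step is the invariance bookkeeping. We need to confirm that $\rho_{T\Delta}(TL)=\rho_\Delta(L)$ and $d_{T\Delta}(TL)=d_\Delta(L)$ for invertible affine $T$. This holds because $\lambda_{TK}(TL)=\lambda_K(L)$, $\mu_{TK}(TL)=\mu_K(L)$, and $T$ maps $-\Delta$ to the reflection of $T\Delta$ through $T(0)$, which is a translate of $-(T\Delta)$ when $T$ is linear plus a shift. Translations do not affect either $\lambda$ or $\mu$, so the barycenter normalization in $\mathscr S$ is harmless. No attainment of the supremum or infimum is needed, since only monotone limits are taken.
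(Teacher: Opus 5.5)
Your proposal is correct and follows the paper's proof. Both combine $m^*(L)=\alpha(L)=\sup_{\Delta\in\mathscr S(L)}\rho_\Delta(L)$ from Theorem~\ref{thm:main-equality} with the two-sided bound of Theorem~\ref{thm:banach} for each $\Delta$, then pass to the supremum using $\sup_{\Delta}1/d_\Delta(L)=1/d_{\rm BM}(L,\Delta)$. Your explicit check that $\rho_\Delta$ and $d_\Delta$ are invariant under a common invertible affine map just spells out what the paper leaves to its remark that it suffices to treat the model simplex $\Delta^n$.
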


\begin{proof}
By \Cref{thm:main-equality},
\[
m^*(L)=\alpha(L)=\sup_{\Delta\in\mathscr S(L)} \rho_\Delta(L).
\]
By \Cref{lem:IOtoE} and \Cref{lem:IOe-third}, for every $L$,
\[
\frac{n}{d_\Delta(L)} \le \rho_\Delta(L) \le n-1+\frac{1}{d_\Delta(L)}.
\]
The last observation is
\[
\sup_{\Delta\in\mathscr S(L)} \frac{1}{d_\Delta(L)}
=
\frac{1}{\inf_{\Delta\in\mathscr S(L)} d_\Delta(L)}
=
\frac{1}{d_{\rm BM}(L,\Delta)} . \qedhere
\]
\end{proof}

\begin{restatetheorem}{thm:stabBM}[restatement]
If $L\subset \R^n$ is a convex body and
$m^*(L)\ge n-\eps$
for some $0\leq \eps<1$, then
\[
 d_{\rm BM}(L,\Delta)\leq \frac{1}{1-\eps}.
\]
\end{restatetheorem}

\begin{proof}
   It immediately follows from the lemma; indeed, 
by assumption and the lemma,
\[
    n-\eps \le m^*(L) \le n-1+\frac{1}{d_{\rm BM}(L,\Delta)}.
    \qedhere
\]
\end{proof}

\section{Outer additivity}
\label{sec:outAdd}

Fix $n$ and let $\Delta = \Delta^{n}$.
Denote the vertices of $\Delta$ by $v_1,\ldots,v_{n+1}$.
For each $j \in [n+1]$, let $F_j$ be the hyperplane that contains the facet that is opposite to~$v_j$. Call $F_1,\ldots,F_{n+1}$ the facet hyperplanes of $\Delta$. Let $u_j$ be the unit vector and $b_j \in \R$ be such that
$F_j  = \{x\in\mathbb{R}^n:\ip{u_j}{x} = b_j\}$
and $\ip{u_j}{v_j} > b_j$.

For $u \in \S^{n-1}$ and a convex body $K \subset \R^n$, denote by $\mathcal{S}(K,u)$
the supporting hyperplane of $K$ in direction $u$, which is of the form 
\[\mathcal{S}(K,u)= u^\perp + h_K(u) u.\]

\subsection{Simplices are outer additive}

Recall that a convex body $K\subset \R^n$ is outer additive if
\[
\lambda_K(L_1+L_2)=\lambda_K(L_1)+\lambda_K(L_2)
\]
for all convex compact $L_1,L_2\subset \R^n$.
Also recall the following well-known additivity of support functions: 
$h_{L_1+L_2} = h_{L_1} + h_{L_2}$.

\begin{claim}
\label{clm:SimAdd}
All simplices are outer additive.
\end{claim}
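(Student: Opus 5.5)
The plan is to reduce to the formula of \Cref{lem:lambda-formula}, which writes $\lambda_{\Delta_v}$ as a linear functional of the support function, and then use $h_{L_1+L_2}=h_{L_1}+h_{L_2}$.

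First, reduce to simplices of the form $\Delta_v$. Outer additivity is invariant under invertible affine maps of $K$. Indeed, for a translation $K\mapsto K+z$ and a nonzero scaling $K \mapsto cK$ with $c>0$, we have $\lambda_{K+z}(L)=\lambda_K(L)$ and $\lambda_{cK}(L)=\lambda_K(L)/c$. For an invertible linear $T$, we have $\lambda_{TK}(TL)=\lambda_K(L)$, and $T(L_1+L_2)=TL_1+TL_2$, with $T$ bijective on compact convex sets. So given an arbitrary simplex $K$, we translate it so that its barycenter is the origin, which gives $K\in\mathscr S$. Then \Cref{lem:simplex} yields $v\in V$ with $K=\Delta_v$. (Translation alone suffices here; no linear map is even needed.)

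Second, for nonempty compact convex $L_1,L_2$, \Cref{lem:lambda-formula} gives
\[
\lambda_{\Delta_v}(L_1+L_2)=\frac{1}{n+1}\sum_{i=0}^n h_{L_1+L_2}(v_i)=\frac{1}{n+1}\sum_{i=0}^n \bigl(h_{L_1}(v_i)+h_{L_2}(v_i)\bigr)=\lambda_{\Delta_v}(L_1)+\lambda_{\Delta_v}(L_2).
\]
This holds for all compact convex sets, including singletons, since \Cref{lem:lambda-formula} only requires nonemptiness.

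There is no real obstacle here. The only point to check is that the translation step is harmless: $\lambda_{K+z}(L)=\lambda_K(L)$, since $x+\lambda(K+z)=(x+\lambda z)+\lambda K$ ranges over the same family of homothets. Consequently the identity for $\Delta_v$ transfers to every simplex.
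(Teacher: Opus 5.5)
Your proof is correct and is essentially the paper's argument. The paper observes that the smallest homothet $\Delta_o(L)$ containing $L$ is cut out by the supporting hyperplanes of $L$ in the $n+1$ facet-normal directions, so additivity of support functions gives $\Delta_o(L_1+L_2)=\Delta_o(L_1)+\Delta_o(L_2)$. You instead use the quantitative form of the same fact, \Cref{lem:lambda-formula}, which also makes explicit the minimality of that homothet and the additivity of sizes that the paper leaves implicit.
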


\begin{proof}
Let $L = L_1+L_2$ for convex compact $L_1$ and $L_2$.
The simplex $\Delta = \Delta^n$ has $n+1$ facets with unit normals $u_1,\ldots,u_{n+1}$. 
The $n+1$ supporting hyperplanes $\mathcal{S}(L,-u_1), \ldots,\mathcal{S}(L,-u_{n+1})$ define the smallest homothet of $\Delta$ that contains $L$; denote it by $\Delta_o(L)$.
The same construction defines $\Delta_o(L_1)$ and $\Delta_o(L_2)$.
Because support functions are additive,
\begin{equation}
\label{eqn:DeltaO}
\Delta_{o}(L)
= \Delta_{o}(L_1)+\Delta_{o}(L_2).
\qedhere    
\end{equation}
\end{proof}

\subsection{Outer additive bodies are simplices}

For $u \in \S^{n-1}$ and a convex body $K \subset \R^n$,
let
\begin{align*}
A_u = A_u(K)&= \mathcal{S}(K,-u) \cap K
\intertext{and}
B_u = B_u(K)&= \mathcal{S}(K,u) \cap K.
\end{align*}
For $x,y \in \R^n$, denote by $[x,y]$
the interval between $x,y$.
The main property we use is that
\begin{align}
\label{eq:segment}
a \in A_u, \ b \in B_u \quad \Longrightarrow \quad
\lambda_{K}([a,b]) = 1.
\end{align}
For $x \in \partial K$,
denote by $\Phi(x) = \Phi_K(x)$ the collection of $y \in \partial K$ such that 
$[x,y] \subset \partial K$.

The key step is the following geometric lemma that gives a necessary ``local'' condition for being outer additive. 

\begin{lemma}
\label{lem:notAdditive}
Let $K \subset \R^n$ be a convex body.
If there are $u \in \S^{n-1}$, $a \in A_u$, $b \in B_u$ and $x \in \partial K \setminus (\Phi(a) \cup \Phi(b))$ then $K$ is not outer additive.
\end{lemma}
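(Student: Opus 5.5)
Since the hypotheses concern points on $\partial K$, the plan is to build two segments $L_1=[a,x]$ and $L_2=[x,b]$ (or a closely related pair) whose Minkowski sum violates additivity. By \eqref{eq:segment}, $\lambda_K([a,b])=1$. Note also that $[a,b]\subseteq [a,x]+[x,b]-x$, since $a+b-x$ plus the three vertices form a parallelogram containing the diagonal. Thus for $L=[a,x]+[x,b]$ we get $\lambda_K(L)\ge 1$. Outer additivity would then force $\lambda_K([a,x])+\lambda_K([x,b])\ge 1$, and the goal is to contradict this. So instead I will look for a convex set $L$ that is a sum of two sets $L_1,L_2$ with $\lambda_K(L_1)+\lambda_K(L_2)$ strictly bigger than $\lambda_K(L)$, built from $a$, $b$, $x$ and small perturbations.

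The cleaner route is the following. Because $x\notin\Phi(a)$, the segment $[a,x]$ is not contained in $\partial K$, so its relative interior meets $\operatorname{int}K$. Hence $[a,x]$ is contained in $K$ but touches $\partial K$ only at its endpoints (plus possibly a set not containing the open middle). A small homothet argument then shows that $[a,x]$ fits in a homothet $t+\lambda K$ with $\lambda<1$ only if we can push it inward. Pushing inward fails, however, since the endpoints are on $\partial K$ in general; this is why a longer segment must be used. So I will take $L_1=[a,x]$ and $L_2=[x,b]$ after translation and show directly that $\lambda_K(L_1)+\lambda_K(L_2)>\lambda_K(L_1+L_2)$. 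The key computation is $\lambda_K(L_1+L_2)\le 1$, i.e.\ that the parallelogram $P=[a,x]+[x,b]-x$, with vertices $a,x,b,a+b-x$, fits into a unit homothet of $K$ after a translation. Meanwhile $\lambda_K([a,x])$ and $\lambda_K([x,b])$ are each bounded below by the ratio of the segment length to the longest chord of $K$ in that direction. Their sum exceeds $1$ when the chords through $x$ are not boundary segments, which is where the hypothesis $x\notin\Phi(a)\cup\Phi(b)$ should enter.

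Honestly, the step I expect to be the main obstacle is finding the right pair $L_1,L_2$. The natural candidates are segments parallel to $x-a$ and $b-x$ (or to $b-a$ and a transversal direction) whose sum is a parallelogram inscribed in $K$, with $\lambda_K(L_1)+\lambda_K(L_2)$ compared to $1$. I would first try $L_1=[a,x]$, $L_2=[x,b]$ and show $\lambda_K(L_1)+\lambda_K(L_2)>1=\lambda_K([a,b])$. Then I would replace $L_1+L_2$ by a set whose $\lambda_K$ is at most $1$, namely something like $\conv\{a,b\}$ expressed as a sum via $L_2'=[0,b-x]$, $L_1'=[a,x]$ with the sum's excess part absorbed. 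If that fails, I would work with the chord directions $d_1=x-a$, $d_2=b-x$. The lengths of the maximal chords of $K$ in these directions give $\lambda_K$ of segments exactly, since $\lambda_K([0,d])=|d|/\ell_K(d)$ where $\ell_K(d)$ is the maximal chord length. The non-boundary hypothesis then yields $\ell_K(d_1)>|a-x|$ only when the maximal chord is elsewhere, so I would choose scaled segments $s d_1,\ s d_2$ with $s d_1+s d_2=s(b-a)$. The goal becomes $\ell_K(d_1)^{-1}|d_1|+\ell_K(d_2)^{-1}|d_2|>\ell_K(b-a)^{-1}|b-a|=1$, which together with a bound on the parallelogram's $\lambda_K$ gives non-additivity.
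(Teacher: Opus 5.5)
There is a genuine gap. The pair you commit to, $L_1=[a,x]$ and $L_2=[x,b]$, does not witness non-additivity under the lemma's hypotheses, and both of your key claims can fail.

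Take $K=[0,1]^2$, $u=e_2$, $a=(\tfrac12,0)\in A_u$, $b=(\tfrac12,1)\in B_u$ and $x=(0,\tfrac12)$. Here $\Phi(a)$ and $\Phi(b)$ are the bottom and top edges, so $x$ is admissible. For this $K$, $\lambda_K(S)$ is the larger of the horizontal and vertical extents of $S$. Hence $\lambda_K([a,x])=\lambda_K([x,b])=\tfrac12$, while the parallelogram $P$ with vertices $a,x,b,a+b-x$ has $\lambda_K(P)=1$. So additivity holds for this pair, and the strict inequality $\lambda_K([a,x])+\lambda_K([x,b])>1$ you hope to prove is false.

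Your other step, $\lambda_K(P)\le 1$, also fails in general. With $a=(\tfrac{9}{10},0)$, $b=(\tfrac{9}{10},1)$ and the same $x$, the vertex $a+b-x=(\tfrac95,\tfrac12)$ gives $\lambda_K(P)=\tfrac95$.

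The rescaled variant with $sd_1,sd_2$ in your last paragraph is the same inequality, so it fails in the same example. Also, the inequality $\lambda_K([a,x])+\lambda_K([x,b])\ge 1$ in your first paragraph holds for every $K$ by subadditivity. It is not something outer additivity forces, and it cannot be contradicted.

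The missing idea is the choice of the transversal segment. The paper picks $u'$ with $x\in A_{u'}$ and a point $y\in B_{u'}$, so that by \eqref{eq:segment} both $[a,b]$ and $[x,y]$ have $\lambda_K=1$ exactly. The hypothesis $x\notin\Phi(a)\cup\Phi(b)$ is used only to conclude that the midpoints $\tfrac{a+x}{2}$ and $\tfrac{b+x}{2}$ are interior points of $K$. Since $\tfrac{a+y}{2},\tfrac{b+y}{2}\in K$, the segments leaving these two points in direction $x-y$ can then be extended slightly beyond length $\tfrac12|x-y|$ while staying inside $K$. This yields a parallelogram $L_1+L_2\subset K$ with $L_1=[\tfrac{a+y}{2},\tfrac{b+y}{2}]$ and $L_2=[0,\tfrac{\alpha}{2}(x-y)]$ for some $\alpha>1$. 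Therefore $\lambda_K(L_1+L_2)\le 1<\tfrac12+\tfrac{\alpha}{2}=\lambda_K(L_1)+\lambda_K(L_2)$.

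Your passing mention of ``segments parallel to $b-a$ and a transversal direction'' points this way. But without the opposite point $y$ (which makes $\lambda_K$ of the transversal segment computable) and the midpoint-interiority argument, the proposal does not yet contain a proof.
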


\begin{proof}
Suppose the condition is satisfied with $u,a,b,x$; see illustration in \Cref{fig:notAdditive}.
Let $u' \in \S^{n-1}$ be such that $x\in A_{u'}$
and let $y \in B_{u'}$. 
Because $a, x, y \in K$, 
\[[ \tfrac{a+y}{2}, \tfrac{a+x}{2}] \subset K.\] 
By choice of $x$, it holds that
$[a,x] \not \subseteq \partial K$. So, $[a,x]$ must contain an interior point of $K$ which implies  \[\tfrac{a+x}2 \not \in \partial K.\] 
We can conclude that there is $\alpha_1 >1$ such that
\[[ \tfrac{a+y}2, \tfrac{a+y}2 + \tfrac{\alpha_1(x-y)}2 ] \subset K .\]
By the same argument for the points $b, x, y$, 
there is $\alpha_2 > 1$ so that
\[[ \tfrac{b+y}{2}, \tfrac{b+y}{2} + \tfrac{\alpha_2(x-y)}2 ] \subset K.\]
Setting $\alpha = \min\{\alpha_1, \alpha_2\}$, we conclude
that the following 
parallelogram $L$ lies in $K$:
\[
L = \conv \{ 
\tfrac{a+y}2, \tfrac{a+y}2 + \tfrac{\alpha(x-y)}2, 
\tfrac{b+y}2, \tfrac{b+y}2 + \tfrac{\alpha(x-y)}2
\}.
\]
It can be expressed as $L = L_1 + L_2$ with
\[
L_1 = [ \tfrac{a+y}2, \tfrac{b+y}2 ] 
\text{\ \ \ \ and\ \ \ \ } 
L_2 = [ 0, \tfrac{\alpha(x-y)}2 ] .
\]
Because $L \subset K$, 
\[\lambda_{K}(L) \leq 1.\]
The interval $L_1$ is parallel to $[a,b]$. 
Because $a \in A_u$ and $b \in B_u$,
\[\lambda_{K}([a,b]) = 1,\]
so
\[\lambda_{K}(L_1) = \lambda_{K}(\tfrac{1}{2}[a,b]) = \tfrac{1}{2}.\]
By the same argument
applied to $L_2$,
\[\lambda_{K}(L_2) = \lambda_{K}(\tfrac{\alpha}{2}[x,y]) = \tfrac{\alpha}{2}.\]
Because $\alpha >1$, it follows that $K$ is not outer additive.
\end{proof}

\begin{figure}\centering
\includegraphics{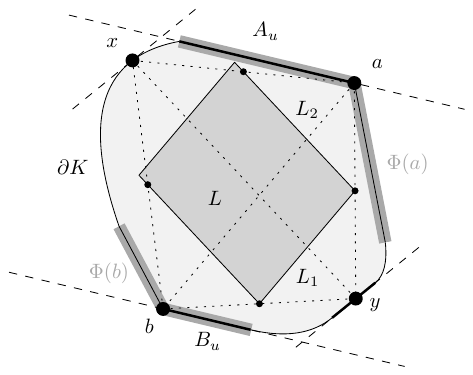}
\caption{The ``local'' condition for being outer additive from \Cref{lem:notAdditive}.
}
\label{fig:notAdditive}
\end{figure}

\begin{restatetheorem}{thm:AddIsSimplex}[restatement]
A convex body is outer additive if and only if it is a simplex.
\end{restatetheorem}

\begin{proof}
We already know that simplices are outer additive.
It remains to prove that outer additive convex bodies are simplices. 
The proof is by induction on~$n$.
The case $n=1$ is trivial.
For $n>1$, assume that $K$ is outer additive. 
Let $u \in \S^{n-1}$ be such that $B_u$ has maximum dimension.
Let $a \in A_u$.

\medskip 
\begin{claim}
\label{clm:partialK}
If $\partial K \subseteq B_u \cup \Phi(a)$
then $K = \conv(\{a\} \cup B_u)$.    
\end{claim}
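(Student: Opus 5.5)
Both inclusions are needed, and one of them is immediate. Since $a \in K$, $B_u \subseteq K$ and $K$ is convex, we have $\conv(\{a\}\cup B_u) \subseteq K$. For the reverse inclusion it suffices to show $\partial K \subseteq \conv(\{a\}\cup B_u)$, because a convex body is the convex hull of its boundary: every interior point lies on a chord joining two boundary points. The hypothesis gives $\partial K \subseteq B_u \cup \Phi(a)$, and points of $B_u$ are trivially in $\conv(\{a\}\cup B_u)$. So the work is to show that every $y \in \Phi(a)\setminus B_u$ lies in $\conv(\{a\}\cup B_u)$.

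Fix such a $y$; then $[a,y]\subset \partial K$. The plan is to push the segment from $a$ through $y$ until it reaches $B_u$. Consider the ray $r(t)=a+t(y-a)$, $t\ge 1$, and let $t^*\ge 1$ be the largest $t$ with $r(t)\in K$. This exists by compactness, and $t^* > 1$ is not needed. The point $z=r(t^*)$ lies in $\partial K$, so by hypothesis either $z\in B_u$ or $z \in \Phi(a)$.

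In the first case $y \in [a,z] \subseteq \conv(\{a\}\cup B_u)$ and we are done. The second case is the obstacle. Here $[a,z]\subset\partial K$ and $z$ is an endpoint of the maximal chord of $K$ on that line. To rule this out, I would use the boundary structure near $z$. Points of $\partial K$ close to $z$ but off the line lie in $B_u\cup\Phi(a)$, and $B_u$ is closed. If $z\notin B_u$, then a whole neighborhood $N$ of $z$ in $\partial K$ consists of points $w$ with $[a,w]\subset\partial K$. The union of these segments contains a relatively open subset of $\partial K$ near $z$, and it is a cone from $a$. A cone from $a$ lying in $\partial K$ and containing a neighborhood of $z$ must lie in a supporting hyperplane $H$ through $a$ and $z$. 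The reason is that a piece of the boundary which is a union of segments from $a$, and is relatively open, lies in a single supporting hyperplane: the flat cone condition forces local flatness, so the face $F=H\cap K$ contains a neighborhood of $z$ in $\partial K$. Then $z$ lies in the relative interior of the facet $F$ relative to the boundary. But $z$ is an endpoint of $K\cap \text{line}$ inside $F$, so $z$ lies in the relative boundary of $F$, which contradicts $F$ containing a full $\partial K$-neighborhood of $z$. Hence $z\in B_u$.

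A cleaner alternative, which I would try first, avoids the local argument. Since $\partial K\setminus B_u$ is relatively open in $\partial K\cong \S^{n-1}$ and is contained in $\Phi(a)$, every point there is joined to $a$ along the boundary. One can show that $\partial K\setminus B_u$ is contained in the union of faces of $K$ containing $a$. Each such face $F$ then has $\partial_{\mathrm{rel}}F$ covered by $B_u$ together with lower faces through $a$, and an induction on the dimension of the face shows $F=\conv(\{a\}\cup (F\cap B_u))$.
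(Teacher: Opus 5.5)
\textbf{Genuine gap in the step excluding $z\in\Phi(a)\setminus B_u$.} Your opening reductions are fine: the inclusion $\conv(\{a\}\cup B_u)\subseteq K$, the reduction to boundary points, and pushing $[a,y]$ out to the last point $z=r(t^*)$ of $K$ on the ray. The trouble is that excluding $z\in\Phi(a)\setminus B_u$ relies on a false principle. You assert that a relatively open piece of $\partial K$ which is a union of segments from $a$ must lie in a single supporting hyperplane. A counterexample is $K\subset\R^3$ a circular cone with apex $a$ and base disk $B_u$. Here $a\in A_u$, and $\partial K\subseteq B_u\cup\Phi(a)$ holds. Any point of the open lateral surface has a neighborhood in $\partial K$ contained in $\Phi(a)$. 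The cone from $a$ over that neighborhood lies in $\partial K$, yet it is not contained in any hyperplane.

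In your argument, the fact that $z$ is the \emph{far} endpoint of its chord is used only after the flatness step. So nothing distinguishes your $z$ from a point in the middle of such a lateral surface, and the contradiction is not established. The ``cleaner alternative'' has the same hole. Every point of a face $F\ni a$ lies in $\Phi(a)$ trivially, so the hypothesis says nothing directly about $\partial_{\mathrm{rel}}F$. The assertion that $\partial_{\mathrm{rel}}F$ is covered by $B_u$ and lower faces through $a$ is exactly the nontrivial content, and it is left unproved.

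\textbf{How to repair it.} The missing idea is to use the maximality of $t^*$ through approximation. Fix $p\in\operatorname{int}K$ and put $q_\eps=(1-\eps)z+\eps p\in\operatorname{int}K$. Let $w_\eps$ be the last point of $K$ on the ray from $a$ through $q_\eps$. Then $w_\eps\in\partial K$, and $[a,w_\eps]$ contains the interior point $q_\eps$, so $w_\eps\notin\Phi(a)$ and hence $w_\eps\in B_u$. Any limit point of $w_\eps$ as $\eps\to 0$ is a point of $K$ on the ray $r$ with parameter at least $t^*$, so it equals $z$. Since $B_u$ is closed, $z\in B_u$. With this repair your route works, and it treats all $y\neq a$ uniformly, so the case split becomes unnecessary.

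For comparison, the paper avoids any local analysis of $\Phi(a)$ in dimension $n$. It cuts $K$ by a $2$-plane through $a$ and an interior point $p$. In that planar section $\Phi(a)$ reduces to the at most two boundary edges at $a$, so the remaining boundary arc lies on a line in $\mathcal S(K,u)$. The section is therefore a triangle with apex $a$ and base in $B_u$, and it contains $p$.
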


\begin{proof}
Assume $\partial K \subseteq B_u \cup \Phi(a)$.
The inclusion $K \supseteq \conv(\{a\} \cup B_u)$ is trivial. It remains to prove the other inclusion.
Let $p$ be a point in the interior of $K$.
Let $F$ be a two-dimensional plane containing $a$ and $p$. Denote by $L$ the two-dimensional convex body $L = F \cap K$, so  $\partial L = F \cap \partial K$. By assumption, $\partial L \subseteq (F \cap B_u) \cup (F \cap \Phi(a))$.
For every $x \in F \cap \Phi(a)$, we have
$[a,x] \subset \partial L$. So $F \cap \Phi(a)$ comprises two line segments. Consequently, $F \cap B_u$ is a closed interval of positive length, and $L$ is the triangle $\conv(\{a\} \cup (F \cap B_u)) = F \cap (\conv(\{a\} \cup B_u))$.
So, $p \in \conv(\{a\} \cup B_u)$ and consequently $K \subseteq \conv(\{a\} \cup B_u)$.
\end{proof}

\begin{claim}
\label{clm:dimBu}
The dimension of $B_u$ is $n-1$.
\end{claim}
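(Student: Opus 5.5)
We argue by contradiction: suppose $\dim B_u = m < n-1$. By the choice of $u$, every face $B_w$ of the form $\mathcal{S}(K,w)\cap K$ then has dimension at most $m$. We will produce $u'\in\S^{n-1}$, $a'\in A_{u'}$, $b'\in B_{u'}$ and $x\in\partial K\setminus(\Phi(a')\cup\Phi(b'))$. \Cref{lem:notAdditive} then shows that $K$ is not outer additive, which contradicts the assumption.

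The first step is a dimension count. For any $p\in\partial K$, the set $\Phi(p)$ is contained in the union of the exposed faces of $K$ through $p$. Any segment $[p,y]\subset\partial K$ lies in a single supporting hyperplane (take one through its midpoint), so $\Phi(p)$ is contained in $\bigcup_{w} B_w$ over outer normals $w$ at $p$. Each such face has dimension at most $m\le n-2$. The natural plan is to show that $\Phi(a)\cup\Phi(b)$ is then "small" in $\partial K$, which is $(n-1)$-dimensional. For this we choose $a,b$ to be points with a unique normal, i.e.\ regular boundary points; these are dense in $\partial K$. If $a$ is regular with normal $-u'$, then $\Phi(a)\subseteq B_{-u'}=A_{u'}$, a convex set of dimension $\le n-2$. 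Similarly, if $b\in B_{u'}$ is regular then $\Phi(b)\subseteq B_{u'}$, also of dimension $\le n-2$.

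The second step is to arrange that both $a\in A_{u'}$ and $b\in B_{u'}$ are regular for a common direction $u'$. We pick a regular point $a$ with outer normal $-u'$. We must then handle the point $b$ on the opposite supporting hyperplane $\mathcal{S}(K,u')$, which need not be regular. This does not matter: even without regularity, $\Phi(b)\subseteq\bigcup_w B_w$ over the normal cone at $b$. That union is a closed set which is a union of faces of dimension $\le n-2$ all containing $b$, and the union over the normal cone is still contained in a set of $(n-1)$-dimensional Hausdorff measure zero. More simply, it is nowhere dense in $\partial K$, since each face is a closed convex set of dimension $\le n-2$. We expect the main obstacle to be making this rigorous, because the family of faces through $b$ can be infinite. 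To handle it, we argue that the union of all faces through $b$ lies in the intersection of $\partial K$ with the boundary cone $b+\mathrm{cone}(K-b)$ restricted to its boundary. Radially projecting from an interior point, this corresponds to a closed set in $\S^{n-1}$ with empty interior. If it had interior, a relatively open piece of $\partial K$ would consist of segments ending at $b$, so the radial cone structure would force a face through $b$ of dimension $n-1$, contradicting $m<n-1$.

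The final step puts these together. $\Phi(a)\cup\Phi(b)$ is a closed, nowhere dense subset of $\partial K$, so there is $x\in\partial K$ outside it. Applying \Cref{lem:notAdditive} with $u',a,b,x$ yields the contradiction, hence $\dim B_u=n-1$.
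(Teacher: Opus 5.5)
\textbf{The gap.} Your plan is workable: take a regular boundary point $a$ with normal $-u'$, so that $\Phi(a)\subseteq A_{u'}$ is closed and of dimension at most $n-2$, then find $x$ outside $\Phi(a)\cup\Phi(b)$. However, the step for the non-regular point $b\in B_{u'}$ is false. When all exposed faces have dimension at most $n-2$, it does not follow that $\Phi(b)$ is nowhere dense in $\partial K$ (or of $(n-1)$-dimensional measure zero). Nor does interior of $\Phi(b)$ force an $(n-1)$-dimensional face through $b$.

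Take $K=\conv(\{p\}\cup B)\subset\R^3$ with $B$ a Euclidean ball and $p\notin B$. Every exposed face of $K$ is a point or a segment $[p,q]$ with $q$ on the tangency circle, so all faces have dimension $\le 1=n-2$. Yet $\Phi(p)$ contains the whole lateral cone surface, which has nonempty interior in $\partial K$ and positive area. This configuration is compatible with your choices. If $a$ is the lowest point of $B$ (a regular point), then the opposite face $B_{u'}$ is $\{p\}$, so $b=p$ is forced. An uncountable family of small faces through $b$ can cover an open piece of $\partial K$, and this is exactly what your nowhere-density argument misses.

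\textbf{The repair.} You only need a single point $x$, so you do not need $\Phi(b)$ to be small, only that it is a closed proper subset of $\partial K$. Closedness follows from closedness of $\partial K$. For properness, take $c\in\operatorname{int}K$ and let $y$ be the point where the ray from $b$ through $c$ leaves $K$. Then $c\in[b,y]$, so $[b,y]\not\subset\partial K$ and $y\notin\Phi(b)$. Hence $\partial K\setminus\Phi(b)$ is a nonempty relatively open subset of $\partial K$. It cannot lie inside the closed nowhere dense set $\Phi(a)\subseteq A_{u'}$, which you handled correctly via regularity of $a$. Any $x$ in the difference can be fed into \Cref{lem:notAdditive}.

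\textbf{Comparison with the paper.} With this fix your route differs from the paper's. The paper keeps the maximizing direction $u$ and disposes of the case $\partial K\subseteq B_u\cup\Phi(a)$ via \Cref{clm:partialK}. Otherwise it uses \Cref{lem:notAdditive} to get $x\in\Phi(b)$ for every $b\in B_u$, which produces a supporting hyperplane whose face contains $B_u\cup\{x\}$, contradicting maximality of $\dim B_u$. Your version avoids \Cref{clm:partialK}, at the cost of invoking the existence and density of regular boundary points.
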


\begin{proof}
If $\partial K \subseteq B_u \cup \Phi(a)$
then by \Cref{clm:partialK} we know that $K = \conv(\{a\} \cup B_u)$ which implies the claim because $K$ has full dimension. 
Otherwise, there is $x \in \partial K$ such that
$x \not \in B_u \cup \Phi(a)$.
Because $K$ is outer additive, by \Cref{lem:notAdditive}, for all $b \in B_u$,
we know that $x \in \Phi(b)$.
Assume towards a contradiction that the dimension of $B_u$ is smaller than $n-1$.
Let $F$ be a hyperplane so that $x \in F$ and $B_u \subseteq F$.
It follows that $F$ is a supporting hyperplane of $K$ and that $F \cap K$ has dimension larger than that of $B_u$ which is a contradiction. 
\end{proof}

\begin{claim}
If $b$ is in the relative interior of $B_u$ then $\Phi(b) = B_u$.
\end{claim}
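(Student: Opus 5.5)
The plan is to prove the two inclusions directly. Outer additivity enters only through \Cref{clm:dimBu}, which says $\dim B_u=n-1$. The key geometric fact is a standard one: a supporting hyperplane of $K$ that passes through a relative interior point of a convex subset of $K$ must contain that whole subset.

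\emph{Inclusion $B_u\subseteq\Phi(b)$.} The face $B_u=\mathcal{S}(K,u)\cap K$ is convex and lies in $\partial K$. So for every $y\in B_u$ the segment $[b,y]$ lies in $B_u\subseteq\partial K$, which means $y\in\Phi(b)$.

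\emph{Inclusion $\Phi(b)\subseteq B_u$.}
\begin{enumerate}[(1)]
\item Let $y\in\Phi(b)$. If $y=b$ there is nothing to prove, so assume $y\neq b$. Let $m=\tfrac{b+y}{2}$; it lies in $\partial K$, so there is a supporting hyperplane $H$ of $K$ through $m$.
\item $H$ contains the whole segment $[b,y]$. Indeed, the affine function defining $H$ is $\le$ its value at $m$ on all of $K$, hence on $[b,y]$. An affine function on a segment that attains its maximum at an interior point of the segment is constant there. In particular $b\in H$.
\item Apply the same argument to $B_u$. The point $b$ is in $\operatorname{relint}(B_u)$, and the affine function defining $H$ is maximized over $B_u$ at $b$. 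Hence it is constant on $B_u$, so $B_u\subseteq H$.
\item By \Cref{clm:dimBu}, $B_u$ has dimension $n-1$, so its affine hull is a hyperplane. That hyperplane is contained in $H$, so the two coincide, and the affine hull of $B_u$ is $\mathcal{S}(K,u)$. Therefore $H=\mathcal{S}(K,u)$.
\item Since $y\in H\cap K=\mathcal{S}(K,u)\cap K=B_u$, we get $y\in B_u$.
\end{enumerate}

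I do not expect a real obstacle. The only point needing care is step (4), identifying $H$ with $\mathcal{S}(K,u)$: this is exactly where $\dim B_u=n-1$ is used, and without it $H$ could be a different supporting hyperplane through a lower-dimensional face.
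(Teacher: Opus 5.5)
Your proof is correct, but it takes a different route from the paper.

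\textbf{The paper's route.} The paper argues by contradiction using the local structure of $\partial K$. Since $\dim B_u=n-1$ and $b\in\operatorname{relint}(B_u)$, every path in $\partial K$ leaving $b$ must first run inside $B_u$. So an initial piece $[b,y]$ of the segment $[b,x]$ lies in $\mathcal{S}(K,u)$. This forces the whole segment, and hence $x$, into $\mathcal{S}(K,u)\cap K=B_u$.

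\textbf{Your route.} You never examine the topology of $\partial K$ near $b$. Instead you take a supporting hyperplane $H$ at the midpoint of $[b,y]$. You apply the face property twice: once on the segment, to get $b\in H$, and once on $B_u$ at its relative interior point $b$, to get $B_u\subseteq H$. The dimension count $\dim B_u=n-1$ then gives $H=\mathcal{S}(K,u)$. In effect you show that $\mathcal{S}(K,u)$ is the unique supporting hyperplane of $K$ at $b$.

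\textbf{Comparison.} Your argument is fully self-contained. It uses only the existence of supporting hyperplanes at boundary points and the fact that an affine function maximized at a relative interior point of a convex set is constant on that set. It also pinpoints exactly where $\dim B_u=n-1$ is used. The paper states without proof that $\partial K$ coincides with $B_u$ near $b$. That assertion is true but needs a small argument of its own, for instance via the cone over a small $(n-1)$-disc of $B_u$ centred at $b$ with apex at an interior point of $K$. In exchange, the paper's version is shorter and more pictorial.
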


\begin{proof}
The inclusion $B_u \subseteq \Phi(b)$ is trivial.
It remains to prove the other inclusion. 
Assume towards a contradiction that $x \in \Phi(b) \setminus B_u$ so that $[b,x] \subset \partial K$. Because $b$ is in the relative interior of $B_u$ and by \Cref{clm:dimBu},
every continuous path on $\partial K$ from $b$
to outside $B_u$ has an open part inside $B_u$. In particular,
there is $y$ in the interior of $[b,x]$ such that $[b,y] \subset B_u \subset \mathcal{S}(K,u)$. This implies that 
$[b,x] \subset \mathcal{S}(K,u) \cap K = B_u$, which is a contradiction.
\end{proof}

Let $b$ be in the relative interior of $B_u$ so that $\Phi(b) = B_u$.
By \Cref{lem:notAdditive}, because $K$ is outer additive,
\[\partial K = \Phi(a) \cup \Phi(b) = \Phi(a) \cup B_u.\]
By \Cref{clm:partialK},
\[K = \conv(\{a\} \cup B_u).\]
It follows that for every convex body $L \subset \mathcal{S}(K,u)$, we have $\lambda_{K}(L) = \lambda_{B_u}(L)$.
Hence, $B_u$ is outer additive in $\mathcal{S}(K,u) \cong \R^{n-1}$ because for every pair of convex bodies $L_1,L_2 \subset \mathcal{S}(K,u)$, \begin{align*}
\lambda_{B_u}(L_1+L_2) 
& = \lambda_{K}(L_1+L_2) \\
& = \lambda_{K}(L_1)+\lambda_{K}(L_2) \\
& = \lambda_{B_u}(L_1)+\lambda_{B_u}(L_2).
\end{align*}
By induction, $B_u$ is an $(n-1)$-dimensional simplex, so $K$ is an $n$-dimensional simplex.
\end{proof}

\begin{remark*}
The theorem is true even when the convex bodies $L_1,L_2$ in the definition of outer additivity
are restricted to be intervals. 
\end{remark*}

\section{Depth of polytopes}
\label{sec:depth}

\subsection{Depth of polytopes and neural networks}
\label{subsec:constructing-polytopes}

Consider the following model for constructing polytopes. The two operations are Minkowski sum 
($A+B = \{a+b: a\in A,b\in B\}$) and convex hull of a union
of two polytopes.
Polytopes of depth zero are points:
\[
\cP_{n,0} = \{ \{x\} : x \in \R^n\}.
\]
Polytopes of depth $d > 0$ are defined inductively as
\[
\cP_{n,d}
=
\left\{
\sum_{j=1}^m \conv(K_j\cup L_j)
:
m \in \N,\ K_j,L_j \in \cP_{n,d-1}
\right\}.
\]
For example, $\cP_{n,1}$ is the set of all zonotopes (sums of segments).
The depth complexity of a polytope $P \subset \R^n$ is the minimum $d$ so that $P \in \cP_{n,d}$.
Depth complexity is always finite. In fact, if $P$ has $m$ vertices then its depth is at most $\lceil \log_2 m \rceil$
(without using any Minkowski sums).

This model has a one-to-one correspondence with the following {\em input convex} neural network (ICNN) model for computing convex functions (defined in~\cite{amos2017input}).
The input gates compute linear functions on $\R^n$. Inner gates compute functions of the form $g(x) =  \sum_j c_j \max\{a_j(x),b_j(x)\}$
where for every $j$, it holds that $c_j > 0$ and $a_j,b_j$ were previously computed.
Motivation for studying ICNNs comes from the facts that ICNNs compute convex functions, and minimizing convex functions is a tractable goal. For example, ICNNs can be trained to represent a loss function that guides the training process of other models.

The correspondence between polytopes and convex functions is seen via support functions. 
Every closed convex set $P \subset \R^n$
defines a support function $h_P : \R^n \to \R$
via
\[h_P(x) = \max \{ \ip{x}{y} : y \in P \}.\]
Support functions have many important properties (see e.g.~\cite{gr2003unbaum}).
In particular, there is a one-to-one correspondence between  $P$ and $h_P$.
This correspondence translates to the computational setting as the following lemma shows. 
\begin{lemma*}[\cite{hertrich2021towards}]
For every polytope $X \in \R^n$, the required number of hidden layers to compute $h_X$ is equal to the minimum $d$ such that there are $P, Q \in \cP_{n,d}$ such that
\[X + P = Q.\]
\end{lemma*}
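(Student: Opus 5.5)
The plan is to prove both inequalities through the dictionary between support functions and ReLU networks. Two facts do most of the work: $h_{A+B}=h_A+h_B$ and $h_{\conv(A\cup B)}=\max\{h_A,h_B\}$. The equation $X+P=Q$ will be read as $h_X=h_Q-h_P$, which is legitimate because support functions determine compact convex sets uniquely.

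First I record three closure properties of the classes $\cP_{n,d}$, each by induction on $d$.
\begin{enumerate}[(i)]
\item $\cP_{n,d}\subseteq\cP_{n,d+1}$, since $K=\conv(K\cup K)$.
\item $\cP_{n,d}$ is closed under Minkowski sums, directly from the definition.
\item $\cP_{n,d}$ is closed under scaling by $c\ge 0$, since $c\conv(K\cup L)=\conv(cK\cup cL)$ and $c\{x\}=\{cx\}$.
\end{enumerate}

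For the direction ``at most $d$ hidden layers suffice'', suppose $X+P=Q$ with $P,Q\in\cP_{n,d}$. I show by induction on $d$ that $h_R$ is computable with $d$ hidden layers for every $R\in\cP_{n,d}$. A point $\{x\}$ gives the linear map $y\mapsto\ip{x}{y}$, which needs no hidden layer. For the inductive step I use
\[
h_{\conv(K\cup L)}=\max\{h_K,h_L\}=h_K+\relu(h_L-h_K),
\]
so one extra hidden layer handles each term. A sum of such terms is absorbed into the next affine map. To carry $h_K$ forward through a layer I use $t=\relu(t)-\relu(-t)$. Running the networks for $h_Q$ and $h_P$ in parallel and subtracting in the output layer then computes $h_X=h_Q-h_P$ with $d$ hidden layers.

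For the converse, take a network with $d$ hidden layers that computes $h_X$. I first reduce to the case with no biases. Every function the network computes is continuous and piecewise linear, so
\[
\tilde f(y)=\lim_{t\to 0^+}f(ty)/t
\]
exists, and $h_X=\tilde h_X$ because $h_X$ is positively homogeneous. Deleting all biases yields a network of the same depth computing exactly this limit, because each ReLU commutes with the limit. I expect this reduction to be the main technical step; I will make it rigorous by induction over neurons, using continuity of ReLU and positive homogeneity of the bias-free network. With biases gone, I claim every pre-activation at layer $k$ (with $0\le k\le d$) has the form $h_{Q}-h_{P}$ for some $P,Q\in\cP_{n,k}$. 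At the input layer a linear functional $\ip{x}{\cdot}$ equals $h_{\{x\}}-h_{\{0\}}$. Applying ReLU gives
\[
\relu(h_Q-h_P)=\max\{h_Q,h_P\}-h_P=h_{\conv(Q\cup P)}-h_P,
\]
and here $\conv(Q\cup P)\in\cP_{n,k+1}$ by definition while $P\in\cP_{n,k+1}$ by (i). Combining neurons linearly with real weights, I put each positively weighted term into the $Q$-part and each negatively weighted term, with roles of $P$ and $Q$ swapped, into the $P$-part, and then use (ii) and (iii). At the output this gives $h_X=h_Q-h_P$ with $P,Q\in\cP_{n,d}$, hence $X+P=Q$. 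Together the two directions show that the minimum number of hidden layers equals the minimum such $d$.
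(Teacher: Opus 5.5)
The paper gives no proof of this lemma; it only cites it. Your overall architecture is the right one, and those parts are correct: the closure properties (i)--(iii), the identity $\relu(h_Q-h_P)=h_{\conv(Q\cup P)}-h_P$, the sign-splitting of real linear combinations, and the forward construction.

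\textbf{The gap} is the bias-removal step, which you yourself single out as the main technical step. Your limit runs in the wrong direction. For $t\to 0^+$ the quotient $f(ty)/t$ need not converge for a CPWL function. Near $0$ one has $f(ty)=f(0)+t\,f'(0;y)$, so the quotient converges only when $f(0)=0$. Intermediate neurons typically have $f(0)\neq 0$: for example, $g(y)=\relu(y_1+1)$ gives $g(ty)/t\to+\infty$.

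Even when the limit exists, it is the directional derivative at $0$, which is not what the bias-free neuron computes. For $g(y)=\relu(y_1-1)$ the limit is $0$ for every $y$, whereas the corresponding bias-free neuron computes $\relu(y_1)$. So the statement your induction over neurons would establish is false at intermediate neurons, and the reduction cannot be made rigorous as proposed. The final conclusion is true only because the output itself is positively homogeneous.

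\textbf{The fix} is to use $\tilde f(y):=\lim_{t\to+\infty} f(ty)/t$ instead. The induction over neurons then proves existence and identification at once:
\begin{enumerate}[(a)]
\item For an input coordinate, $ty_i/t=y_i$.
\item For a pre-activation $z=\sum_j w_j g_j+b$, one has $z(ty)/t=\sum_j w_j g_j(ty)/t+b/t\to\sum_j w_j\tilde g_j(y)$, so the biases vanish in the limit.
\item After activation, $\relu(z(ty))/t=\relu(z(ty)/t)\to\relu(\tilde z(y))$, by positive homogeneity and continuity of $\relu$.
\end{enumerate}
Hence the bias-free network computes $\tilde f$ of the original output. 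Since $h_X(ty)/t=h_X(y)$ for all $t>0$, this is $h_X$.

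An equivalent argument avoids limits of functions. Multiplying all biases of the network $N$ by $s>0$ gives a network computing $y\mapsto sN(y/s)$, which equals $h_X$ for every $s>0$. Letting $s\to 0^+$ and using continuity of the output in the parameters shows that the bias-free network computes $h_X$. With either correction, the rest of your proof goes through unchanged.
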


A particularly important polytope in this setting is the simplex.
The support function $h_{\Delta^n}$ corresponds to computing the maximum of $n+1$ numbers. 
The depth complexity of $\Delta^n$ is known to be $\ceil{\log_2(n+1)}$: the upper bound is immediate, and the matching lower bound was proved by Valerdi~\cite{valerdi2024minimal}.
This quantity is also important on the functional side, because if $\Delta^n$ has depth $d$, then every continuous piecewise linear (CPWL) function on $\R^{n-1}$ can be computed with $d$ hidden layers~\cite{arora2018understanding,wang2005generalization}.
In other words, $h_{\Delta^n}$ is complete for depth complexity with respect to all CPWL functions on $(n-1)$-dimensional space.
This led the authors of~\cite{hertrich2021towards} to conjecture that the number of hidden layers required to compute $h_{\Delta^n}$ is exactly $\lceil \log_2(n+1) \rceil$.
The authors of~\cite{bakaev2025better} constructed polytopes $P, Q \in \cP_{4,2}$ such that
\[
\Delta^4 + P = Q.
\]
This construction refuted the conjecture. With some additional work, ~\cite{bakaev2025better} deduced that
the number of hidden layers required to compute $h_{\Delta^n}$ is at most $\approx \log_3(n)$.
Other lower-bound results are known under additional restrictions, such as integral or rational weights~\cite{haase2023lower,averkov2025expressiveness}.

\subsection{Indecomposability and approximation}
\label{subsec:approx-polytopes}

There are numerous classical approximation problems in convex geometry, such as approximating a given convex set by polytopes and approximating a given polytope by specific polytopes. 
These problems, in their many forms, have been studied for many years with various types of motivation (see~\cite{boroczky2000approximation, gruber1983approximation,gr2003unbaum,kallay1982indecomposable} and references within).

Shephard and others considered the following approximation problem (see~\cite{shephard1964approximation} and references within).
A target convex set $T$ is approximated by a collection $\cK$ of convex sets
if there is a sequence of sets $(K_j)_{j=1}^\infty$ such that each $K_j$ is a finite Minkowski sum of elements of $\cK$ and $K_j \to T$ as $j \to \infty$
in the Hausdorff metric. 

As we describe next, Shephard proved that an indecomposable~$T$ can only be ``trivially approximated''. 
Recall that $\cH_K$ is the set of all homothets of $K$. 
A set $K \subset \R^n$ is indecomposable if it is ``prime with respect to Minkowski summation'':

\begin{defn}
    A set $K \subset \R^n$ is indecomposable if for every $L_1,\ldots,L_m \subset \R^n$ such that $K = L_1+\ldots+L_m$, it holds that $L_1,\ldots,L_m \in \cH_K$.
\end{defn}

Indecomposable polytopes were studied in many works (see e.g.~\cite{firey1964addition,shephard1964approximation,shephard1963decomposable}). 
It is known, e.g., that if $K$ is a polytope so that all of its two-dimensional faces are triangles then $K$ is indecomposable~\cite{shephard1963decomposable}.
Indecomposability was also recently used to study the expressivity of monotone ReLU neural networks and input convex neural networks (ICNNs);
see~\cite{bakaev2025depth,valerdi2024minimal}.
Returning to the approximation problem mentioned above, Shephard proved that an indecomposable $T$ can be approximated by a class of convex bodies $\cK$ only if $\cK$ contains a homothet of~$T$.

It is well-known that simplices are indecomposable.
One of our results is a stronger quantitative obstruction.
We show that if for some depth parameter $d$ there is a sequence of polytopes $(P_j)$ in $\cP_{n,d}$ such that $P_j \to \Delta^n$ as $j \to \infty$, then necessarily $d \geq \ceil{\log_2(n+1)}$.
In fact, we show that polytopes of depth complexity smaller than $\ceil{\log_2(n+1)}$ are {\em uniformly} bounded away from $\Delta^n$.

More generally,
we are interested in the depth complexity of {\em approximating} a given polytope up to some finite accuracy. 
We measure the quality of an approximation using a ``simplex-based'' measure.
As opposed to Shephard's result, in our non-asymptotic setting, some indecomposable polytopes can be approximated non-trivially.
The reason is that, in dimension $n \geq 3$, the set of indecomposable polytopes is dense~\cite{schneider2013convex}, so every polytope $P \subset \R^n$ is close to an indecomposable polytope $Q \subset \R^n$
(for example, we can build $Q$ that is close to $P$ such that all of its two-dimensional faces are triangles).

\subsection{The simplex-based measure and depth}
\label{sec:simplex-based-measure}

\begin{restatetheorem}{thm:Simplex}[restatement]
If a polytope $P$ has depth $d < \lceil \log_2(n+1) \rceil$, then 
\[\rho_\Delta(P) \le 2^d-1.\]
This bound is sharp (this inequality is an equality for some polytopes).
\end{restatetheorem}

\begin{proof}
The proof is by induction on $d$.
If $d=0$, then every element of $\cP_{n,0}$ is a point, so the claim is exactly the convention $\rho_\Delta(\{p\})=0$.
If $d=1$, every $P\in\cP_{n,1}$ is a zonotope and hence centrally symmetric, so $\rho_\Delta(P)=1$ and the statement is correct.
Now let $d>1$ and let
\[
P=\sum_j \conv(K_j \cup L_j) \in \cP_{n,d},
\qquad K_j,L_j\in\cP_{n,d-1}.
\]

We may assume that every summand $\conv(K_j \cup L_j)$ is not a point. 
We may also assume that every $K_j,L_j$ are non-singleton compact convex sets
(if, say, $K_j$ is a point, choose $x\in L_j\setminus K_j$ and replace $K_j$ by $K_j:=\conv(K_j \cup \{x\})$).
\Cref{thm:De} and \Cref{thm:DeUnion} imply
\begin{align*}
\rho_\Delta(P)
&\le \max_j \rho_\Delta\bigl(\conv(K_j \cup L_j)\bigr)\\
&\le \max_j \bigl(\rho_\Delta(K_j)+\rho_\Delta(L_j)+1\bigr)\\
&\le 2(2^{d-1}-1)+1\\
&= 2^d-1.
\end{align*}

This proves the inequality.
The inequality is achieved by construction. In depth $d$, we can build a $(2^d-1)$-dimensional simplex $\Delta^{(d)}$ with $2^d$ vertices that are vertices of $\Delta$, and then
\begin{equation*}
\rho_\Delta( \Delta^{(d)}) \ge 2^d-1.
\qedhere    
\end{equation*}

\end{proof}

\subsection{Depth complexity}
\label{sec:minkowski-depth}

The identification $\alpha(K)=m^*(K)$  turns the depth bounds for the simplex-based measure into depth bounds for the intrinsic Minkowski measure of symmetry. 

\begin{restatetheorem}{thm:minkowski-depth-upper}[restatement]
Let $P\subset\R^n$ be a non-singleton polytope, let $k=\dim P$, and suppose that
$P\in\cP_{n,d}$. If $0\le d<\lceil\log_2(k+1)\rceil$, then
\[
m^*(P)\le 2^d-1.
\]
\end{restatetheorem}

\begin{proof}
Fix $\Delta\in\mathscr S(P)$. 
 Let
$T:E_P\to\mathbb R^k$ be an invertible affine map with
$T(\Delta)=\Delta^k$, then $T(P)\in\mathcal P_{k,d}$ and
\[
\rho_\Delta(P)=\rho_{\Delta^k}(T(P)).
\]
\Cref{thm:Simplex} in dimension $k$ gives
\[
\rho_{\Delta^k}(T(P))\le 2^d-1.
\]
Taking the supremum over $\Delta\in\mathscr S(P)$ yields
\[
m^*(P)=\alpha(P)\le 2^d-1.
\qedhere 
\]
\end{proof}

\begin{remark*}
The bound in \Cref{thm:minkowski-depth-upper} 
is sharp 
as in \Cref{thm:Simplex}.
\end{remark*}

\subsection{Inapproximability}
\label{sec:inapproximability}

\Cref{thm:Simplex} also allows one to move from inapproximability on the ``geometric side''
to inapproximability on the ``function side''.
On the geometric side, the distance of a polytope $P$ from the simplex is captured by the simplex-based measure $\rho_\Delta(P)$
where $\Delta = \Delta^n$.
On the function side, we can measure the distance of $h_P$ from $h_\Delta$ by sampling $X \sim \S^{n-1}$ from the uniform (Haar) probability measure on the sphere via
 \[\E |h_P(X) - h_{\Delta^n}(X)| .\]
The Blaschke selection theorem 
says that the collection of compact convex sets is locally compact. It implies that for every $n$ and $\delta > 0$,
there is $\eps>0$ such that if $\rho_\Delta(P) \leq n-\delta$ then
 \[\E |h_P(X) - h_{\Delta}(X)| \geq \eps.\]
By \Cref{thm:Simplex}, there is $\eps_0 = \eps_0(n) > 0$ such that if the polytope $P$ has depth $d < \lceil \log_2 (n+1) \rceil$ then 
 \[\E |h_P(X) - h_{\Delta}(X)| \geq \eps_0.\]
In words, support functions of polytopes of small depth are far from $h_\Delta$
(a.k.a.\ the $\max_{n+1}$ function).
It is worth noting that $\eps_0 \to 0$ as $n \to \infty$ because the simplex is approximated by a point (under this definition).
Indeed, $\E |h_{\Delta}(X)| \leq O(\tfrac{\log n}{\sqrt{n}})$ 
so for $P = \{0\} \in \cP_{n,0}$,
\[\E |h_P(X) - h_{\Delta}(X)| \to 0 \]
as $n \to \infty$.

\newcommand{\etalchar}[1]{$^{#1}$}


\begin{thebibliography}{HBDSS21}

\bibitem[ABMM18]{arora2018understanding}
Raman Arora, Amitabh Basu, Poorya Mianjy, and Anirbit Mukherjee.
\newblock Understanding deep neural networks with rectified linear units.
\newblock In {\em International Conference on Learning Representations}, 2018.

\bibitem[AHM25]{averkov2025expressiveness}
Gennadiy Averkov, Christopher Hojny, and Maximilian Merkert.
\newblock On the expressiveness of rational {ReLU} neural networks with bounded
  depth.
\newblock In {\em 13th international Conference on Learning Representations,
  ICLR 2025}, 2025.

\bibitem[AK12]{akopyan2012kadets}
Arseniy Akopyan and Roman Karasev.
\newblock Kadets-type theorems for partitions of a convex body.
\newblock {\em Discrete \& Computational Geometry}, 48:766--776, 2012.

\bibitem[AXK17]{amos2017input}
Brandon Amos, Lei Xu, and J.~Zico Kolter.
\newblock Input convex neural networks.
\newblock In {\em International conference on machine learning}, pages
  146--155. PMLR, 2017.

\bibitem[BBH{\etalchar{+}}25]{bakaev2025depth}
Egor Bakaev, Florestan Brunck, Christoph Hertrich, Daniel Reichman, and Amir
  Yehudayoff.
\newblock On the depth of monotone {ReLU} neural networks and {ICNNs}.
\newblock {\em arXiv preprint arXiv:2505.06169}, 2025.

\bibitem[BBH{\etalchar{+}}26]{bakaev2025better}
Egor Bakaev, Florestan Brunck, Christoph Hertrich, Jack Stade, and Amir
  Yehudayoff.
\newblock Better neural network expressivity: Subdividing the simplex.
\newblock In {\em Proceedings of the 58th Annual ACM Symposium on Theory of
  Computing}, STOC '26, page 500–507, 2026.

\bibitem[BJ96]{boroczky1996around}
K{\'a}roly B{\"o}r{\"o}czky~Jr.
\newblock Around the {Rogers--Shephard} inequality.
\newblock {\em Mathematica Pannonica}, (7):113--130, 1996.

\bibitem[BJ00]{boroczky2000approximation}
K{\'a}roly B{\"o}r{\"o}czky~Jr.
\newblock Approximation of general smooth convex bodies.
\newblock {\em Advances in Mathematics}, 153(2):325--341, 2000.

\bibitem[BJ05]{boroczky2005stability}
K{\'a}roly B{\"o}r{\"o}czky~Jr.
\newblock The stability of the {Rogers--Shephard} inequality and some related
  inequalities.
\newblock {\em Advances in Mathematics}, 190(1):47--76, 2005.

\bibitem[FG64]{firey1964addition}
William~J. Firey and Branko Gr{\"u}nbaum.
\newblock Addition and decomposition of convex polytopes.
\newblock {\em Israel Journal of Mathematics}, 2(2):91--100, 1964.

\bibitem[Gr{\"u}63]{grunbaum1963measures}
Branko Gr{\"u}nbaum.
\newblock Measures of symmetry for convex sets.
\newblock In {\em Convexity: Proceedings of the Seventh Symposium in Pure
  Mathematics of the American Mathematical Society}, volume~7, page 233.
  American Mathematical Soc., 1963.

\bibitem[Gru83]{gruber1983approximation}
Peter~M. Gruber.
\newblock Approximation of convex bodies.
\newblock In {\em Convexity and its Applications}, pages 131--162. Springer,
  1983.

\bibitem[Gr{\"u}03]{gr2003unbaum}
Branko Gr{\"u}nbaum.
\newblock {\em Convex Polytopes}.
\newblock Springer-Verlag, New York, 2003.

\bibitem[Guo05]{guo2005stability}
Qi~Guo.
\newblock Stability of the {Minkowski} measure of asymmetry for convex bodies.
\newblock {\em Discrete \& Computational Geometry}, 34(2):351--362, 2005.

\bibitem[HBDSS21]{hertrich2021towards}
Christoph Hertrich, Amitabh Basu, Marco Di~Summa, and Martin Skutella.
\newblock Towards lower bounds on the depth of {ReLU} neural networks.
\newblock {\em Advances in Neural Information Processing Systems},
  34:3336--3348, 2021.

\bibitem[HHL23]{haase2023lower}
Christian~Alexander Haase, Christoph Hertrich, and Georg Loho.
\newblock Lower bounds on the depth of integral {ReLU} neural networks via
  lattice polytopes.
\newblock In {\em The Eleventh International Conference on Learning
  Representations}, 2023.

\bibitem[Kal82]{kallay1982indecomposable}
Michael Kallay.
\newblock Indecomposable polytopes.
\newblock {\em Israel Journal of Mathematics}, 41:235--243, 1982.

\bibitem[Kla10]{klain2010containment}
Daniel~A. Klain.
\newblock Containment and inscribed simplices.
\newblock {\em Indiana University Mathematics Journal}, pages 1231--1244, 2010.

\bibitem[Sch09]{schneider2009stability}
Rolf Schneider.
\newblock Stability for some extremal properties of the simplex.
\newblock {\em J. Geom}, 96(1):135--148, 2009.

\bibitem[Sch13]{schneider2013convex}
Rolf Schneider.
\newblock {\em Convex bodies: the {Brunn--Minkowski} theory}, volume 151.
\newblock Cambridge university press, 2013.

\bibitem[She63]{shephard1963decomposable}
Geoffrey~C. Shephard.
\newblock Decomposable convex polyhedra.
\newblock {\em Mathematika}, 10(2):89--95, 1963.

\bibitem[She64]{shephard1964approximation}
Geoffrey~C. Shephard.
\newblock Approximation problems for convex polyhedra.
\newblock {\em Mathematika}, 11(1):9–18, 1964.

\bibitem[Tot13]{toth2013notes}
Gabor Toth.
\newblock Notes on {Schneider}’s stability estimates for convex sets.
\newblock {\em Journal of Geometry}, 104(3):585--598, 2013.

\bibitem[Tot15]{toth2015measures}
Gabor Toth.
\newblock {\em Measures of symmetry for convex sets and stability}, volume 776.
\newblock Springer, 2015.

\bibitem[Val24]{valerdi2024minimal}
Juan~L. Valerdi.
\newblock On minimal depth in neural networks.
\newblock {\em arXiv:2402.15315}, 2024.

\bibitem[WS05]{wang2005generalization}
Shuning Wang and Xusheng Sun.
\newblock Generalization of hinging hyperplanes.
\newblock {\em IEEE Transactions on Information Theory}, 51(12):4425--4431,
  2005.

\end{thebibliography}
\end{document}